\newtheorem{theorem}{Theorem}[section]
\newtheorem{lemma}[theorem]{Lemma}
\newtheorem{proposition}[theorem]{Proposition}
\newtheorem{corollary}[theorem]{Corollary}
\theoremstyle{definition}
\newtheorem{definition}[theorem]{Definition}
\theoremstyle{remark}
\newtheorem{remark}[theorem]{Remark}
\theoremstyle{remark}
\numberwithin{equation}{section}
\newcommand{\R}{\mathbb{R}}
\newcommand{\eps}{{\varepsilon}}
\newcommand{\dist}{{\textup {dist}}}
\newcommand{\de}{\partial}
\newcommand{\ph}{\varphi}
\renewcommand{\d}{{\rm d}}
\newcommand{\spt}{{\rm spt}\,}
\newcommand{\res}{\mathop{\hbox{\vrule height 7pt width .5pt depth 0pt
\vrule height .5pt width 6pt depth 0pt}}\nolimits}
\newcommand{\loc}{\textup{loc}}
\renewcommand{\and}{\quad \text{and} \quad}
\newcommand{\reg}{\textup{Reg}}
\newcommand{\sing}{\textup{Sing}}
\newcommand{\other}{\textup{Other}}
\renewcommand{\div}{\textup{div}}
\renewcommand{\top}{\textup{top}}
\newcommand{\low}{\textup{low}}
\newcommand{\sgn}{\textup{sign}}
\newcommand{\cL}{{\mathcal{L}}}
\newcommand{\cH}{{\mathcal{H}}}
\newcommand{\cT}{{\mathcal{T}}}
\newcommand\N{{\mathbb N}}
\newcommand{\ie}{\textit{i.e.}}
\newcommand{\eg}{\textit{e.g.}}
\newcommand{\ZZ}{\mathscr{Z}_{\varphi,\theta,\delta}}
\newcommand{\normphi}{\|\varphi\|}
\newcommand{\dm}{\d\mathfrak{m}}
\newcommand{\dmp}{\d\mathfrak{m}'}
\title[The free boundary of the fractional obstacle problem]
{
The local structure of the free boundary in the fractional obstacle problem}
\author[M.~Focardi]{Matteo Focardi}
\address{DiMaI, Universit\`a degli Studi di Firenze}
\curraddr{Viale Morgagni 67/A, 50134 Firenze (Italy)}
\email{matteo.focardi@unifi.it}
\author[E.~Spadaro]{Emanuele Spadaro}
\address{Dipartimento di Matematica, Universit\`a di Roma La Sapienza}
\curraddr{P.le Aldo Moro 5, 00185 Rome (Italy)}
\email{spadaro@mat.uniroma1.it}
\thanks{The authors have been partially funded
by the ERC-STG Grant n. 759229 HiCoS ``Higher Co-dimension Singularities: Minimal Surfaces and 
the Thin Obstacle Problem'' and by GNAMPA of  INdAM.}
\subjclass[2010]{Primary 35R35, 49Q20.}
\keywords{Thin obstacle problem, free boundary, rectifiability.}
\date{}
\begin{document}
\begin{abstract}
Building upon the recent results in \cite{FoSp17} we provide a thorough description of the
free boundary for solutions to the fractional obstacle problem in $\R^{n+1}$ with obstacle function 
$\varphi$ (suitably smooth and decaying fast at infinity) up to sets of null $\cH^{n-1}$ measure.
In particular, if $\varphi$ is analytic, the problem reduces to the zero obstacle case 
dealt with in \cite{FoSp17} and therefore we retrieve the same results:
\begin{itemize}
\item[(i)] local finiteness of the $(n-1)$-dimensional Minkowski content of the free boundary (and thus of its Hausdorff measure),
\item[(ii)] $\cH^{n-1}$-rectifiability of the free boundary,
\item[(iii)] classification of the frequencies and of 
the blow-ups up to a set of Hausdorff dimension at most $(n-2)$ in the free boundary.
\end{itemize}

Instead, if $\varphi\in C^{k+1}(\R^n)$, $k\geq 2$, similar results 
hold only for distinguished subsets of points in the free boundary 
where the order of contact of the solution with the obstacle function 
$\varphi$ is less than $k+1$.
\end{abstract}

\maketitle

%
%
\section{Introduction}

Quasi-geostrophic flow models \cite{CV}, anomalous diffusion in disordered media \cite{BG90} and American options with jump processes \cite{CT04} are some instances of constrained variational problems involving free boundaries for thin obstacle problems. 
In this paper we analyze the fractional obstacle problem with 
exponent $s\in(0,1)$, a problem that can be stated in several ways, each motivated by a different application and suited to be studied 
with different techniques. We follow here the variational approach: 
given $\varphi:\R^n\to\R$ smooth 
and decaying sufficiently fast at infinity, one seeks for minimizers of 
the $H^s$-seminorm 
\[
 [v]_{H^s}^2:=\int_{\R^n\times\R^n}\frac{|v(x')-v(y')|^2}{|x'-y'|^{n+2s}}\d x'\,\d y'
\]
$s \in (0,1)$, on the cone 
\begin{align*}
\mathscr{A}:=\Big\{v\in \dot{H}^s(\R^n):\, v(x')\geq \varphi(x')\Big\},
\end{align*}
where $\dot{H}^s(\R^n)$ is the homogeneous space defined as the closure in the
$H^s$ seminorm of $C_c^\infty(\R^n)$ functions.
Existence and uniqueness of a minimizer $w$ follow for all $s\in(0,1)$ if $n\geq 2$ 
(the case $n=1$ requires some care see \cite{Si07} and \cite{BaFiRo16}).
In addition, defining the fractional laplacian as 
\[
 (-\Delta)^s v(x'):= c_{n,s}\, \textrm{P.V.}\int_{\R^n}\frac{v(x')-v(y')}{|x'-y'|^{n+2s}}\d y',                       
\]
for $v\in \dot{H}^s(\R^n)$, the Euler-Lagrange conditions characterize $w$ as a 
distributional solution to the system of inequalities
 \begin{align}\label{e:ob-pb full space}
 \begin{cases}
 w(x') \geq \varphi(x') & \text{for }\; x'\in \R^n,\\
 (-\Delta)^s w(x') =0  & \text{for }\; w(x') > \varphi(x') ,\\
 (-\Delta)^s w(x') \geq0& \text{for }\; x'\in \R^n\,.
 \end{cases}
 \end{align}
 The most challenging regularity issues are then that of $w$ itself and that of its free boundary 
 \[
 \Gamma_\varphi(w):=\partial\big\{x'\in \R^n:\, w(x') =\varphi(x') \big\}.
 \]
 To investigate the fine properties of the solution $w$ of \eqref{e:ob-pb full space} 
 the groundbreaking paper by Caffarelli and Silvestre \cite{CaSi07} introduces
 an equivalent local counterpart for the fractional obstacle problem in terms 
 of the so called $a$-harmonic extension argument. 
 Indeed, it is inspired by the case $s=\sfrac 12$, in which it is nothing but the 
 harmonic extension problem. 
 More precisely, setting $a=1-2s$ for $s\in(0,1)$ and
 $\mathfrak{m} = |x_{n+1}|^a\cL^{n+1}$, 
 it turns out that any function $w$ satisfying 
\eqref{e:ob-pb full space} is the trace of a function $u\in H^1(\R^{n+1},\dm)$ 
solving for $x=(x',x_{n+1})\in \R^{n+1}$
\begin{equation}\label{e:ob-pb global}
\begin{cases}
u(x',0) \geq \varphi & \text{for }\; (x',0)\in \R^{n}\times\{0\},\\
u(x',x_{n+1}) = u (x', -x_{n+1}) &\text{for all}\; x\in \R^{n+1},\\
\div\big(|x_{n+1}|^a \nabla u(x)\big) = 0  & \text{for }\; 
x \in \R^{n+1} \setminus \big\{(x',0)\,:\,
u(x',0) = \varphi(x') \big\},\\
\div\big(|x_{n+1}|^a \nabla u(x)\big) \leq 0 & \text{in
}\,\mathscr{D}'(\R^{n+1}).
\end{cases}
\end{equation}
In particular, note that $u$ is unique minimizer of the Dirichlet energy
\[
\int_{\R^{n+1}} |\nabla \widetilde{v}|^2 |x_{n+1}|^a\d x
\]
on the class $\widetilde{\mathscr{A}}:=\big\{\widetilde{v}\in 
H^1(\R^{n+1},\dm):\, \widetilde{v}(x',0)\geq \varphi(x')\big\}$.
Viceversa, the trace $u(x',0)$ on the hyperplane $\{x_{n+1}=0\}$ of a solution 
$u$ to \eqref{e:ob-pb global} is a solution $w$ to \eqref{e:ob-pb full space}, 
as for all $x'\in\R^n$ (cf.~\cite{CaSi07})
\[
 \lim_{x_{n+1}\to 0^+}|x_{n+1}|^a \partial_{n+1} u(x)=-(-\triangle)^s u(x',0)\,.
\]

One then is interested into regularity issues for $u$ and for the corresponding 
free boundary $\Gamma_\varphi(u)$ (with a slight abuse of notation we use the 
same symbol as for the analogous set for $w$): the topological boundary, 
in the relative topology of $\R^n$, of the coincidence set of a solution $u$
\[
\Lambda_\varphi(u) := \big\{(x', 0) \in \R^{n+1}\,: u(x', 0) =\varphi(x') \big\}.
\]
The locality of the operator 
\begin{equation}\label{e:La}
L_a(v):=\div\big(|x_{n+1}|^a \nabla v(x)\big)
\end{equation}
in \eqref{e:ob-pb local} is the main advantage of the new formulation 
to perform the analysis of $\Gamma_\varphi(u)$. 
Indeed, being $\Gamma_\varphi(u)=\Gamma_\varphi(w)$ it permits the use of monotonicity 
ùand almost monotonicity type formulas analogous to those introduced 
by Weiss and Monneau for the classical obstacle problem 
(cf.~\cite{Caffa77, Caffa98, Weiss, Monneau03}).

Optimal interior regularity for $u$ has been established Caffarelli, Salsa and Silvestre 
in \cite[Theorem~6.7 and Corollary~6.8]{CaSaSi08} for any $s\in(0,1)$  (see also \cite{CaDSSa17}). 
The particular case $s=\sfrac12$ had been previously addressed by Athanasopoulos, Caffarelli 
and Salsa in \cite{AtCaSa08}.
Instead, despite all the mentioned progresses, the current picture for 
free boundary regularity theory is still incomplete. 
In this paper we go further on in this direction and deal with the non-zero obstacle case 
following the recent achievements obtained in the zero-obstacle case in \cite{FoSp17, FoSp17bis}.
Drawing a parallel with the theory in the zero-obstacle case, the free boundary 
$\Gamma_\varphi(u)$ can be split as a pairwise disjoint union of sets:
\begin{equation}\label{e:decomposizione free boundary}
\Gamma_\varphi(u) = \reg(u) \cup \sing(u)\cup \other(u),
\end{equation}
termed in the existing literature as the subset of regular, singular and nonregular/nonsingular points, respectively. 
These sets are defined via the infinitesimal behaviour of appropriate rescalings of the 
solution itself. More precisely, for $x_0 \in \Gamma_\varphi(u)$ a function ${\varphi}_{x_0}$ 
related to $\varphi$ can be conveniently defined (cf.  \eqref{e:rescaling0} and \eqref{e:rescaling phi}) in a way that  if 
\[
 u_{x_0,r}(y) :=
\frac{r^{\frac{n+a}{2}}\big(u(x_0+r\,y)-{\varphi}_{x_0}(x_0+r\,y)\big)}
{\Big(\int_{\de B_r} (u-{\varphi}_{x_0})^2\,|x_{n+1}|^{a}d\cH^{n}\Big)^{\sfrac12}}\; ,
\]
then the family of functions $\{u_{x_0,r}\}_{r>0}$ is 
pre-compact in $H^1_{\loc}(\R^{n+1},\dm)$ (see \cite[Section 6]{CaSaSi08}). 
The limits are called blowups of $u$ at $x_0$, they are 
homogeneous solutions of a fractional obstacle problem with
zero obstacle. The set of all such functions is denoted by 
$\textup{BU}(x_0)$. Their homogeneity $\lambda(x_0)$ depends 
only on the base point $x_0$ and not on the extracted subsequence, and it is called 
\textit{infinitesimal homogeneity} or \textit{frequency} of $u$ at $x_0$. 
It is indeed the limit value, as the radius vanishes, 
of an Almgren's type frequency function related to $u$ which
turns out to be non decreasing in the radius. Given this, one defines
\begin{gather*}
\reg(u):=\{x\in\Gamma_\varphi(u):\,\lambda(x_0)=1+s\},\quad
\sing(u):=\{x\in\Gamma_\varphi(u):\,\lambda(x_0)=2m,\,m\in\N\},\\
\other(u) := \Gamma_\varphi(u) \setminus \Big(\reg(u) 
\cup \sing(u)\Big).
\end{gather*}
According to the regularity of $\varphi$ different results are known in literature:
\begin{itemize}
 \item[(i)] Regular points: in \cite{CaSaSi08} for $\varphi\in C^{2,1}(\R^n)$ optimal 
 one-sided $C^{1,s}$ regularity of solutions is established. Moreover, $\reg(u)$ is
 shown to be locally a $C^{1,\alpha}$ submanifolf of codimension $2$
 in $\R^{n+1}$ (non-optimal regularity of the solution had been previously established in \cite{Si07});

 \item[(ii)] Singular points: for $\varphi$ analytic and $a=0$
 it is proved in \cite{GaPe09} 
 that $\sing(u)$ is $(n-1)$-rectifiable. The latter result
 has been very recently extended 
 to the full range $a\in(-1,1)$ and to
 $\varphi\in C^{k+1}(\R^n)$, $k\geq 2$, 
 in \cite{GaRos17}.
 Furthermore, fine properties of the singular set have been studied very recently 
 by Fern\'andez-Real and Jhaveri \cite{FJ18}.
\end{itemize}
It is also worth mentioning the paper by Barrios, Figalli and Ros-Oton \cite{BaFiRo16},
in which the authors study the fractional obstacle problem \eqref{e:ob-pb full space} with 
non zero obstacle $\varphi$ having compact support and satisfying suitable 
concavity assumptions. Under these assumptions, they are able to fully characterize
the free boundary, showing that $\other(u) = \emptyset$ and that at every
point of $\sing(u)$ the blowup is quadratic, \ie~the only admissible value
of $m$ is $1$. 
In addition, they are able to show that the singular set $\sing(u)$ is locally 
contained in a single $C^1$-regular submanifold (see also \cite{CaDSSa17} for 
the case of less regular obstacles and \cite{DaSa15,KPS15,KRS17,JN17} for higher 
regularity results on $\reg(u)$).

For ease of expositions we start with the simpler case in which the
obstacle function $\varphi$ is analytic, actually the slightly milder assumption 
\eqref{e:anal} below suffices (see Section~\ref{ss:cikappagamma} for related results in 
the case $\varphi\in C^{k+1}(\R^n)$). 
Indeed, after a suitable transformation (see Section~\ref{ss:analytic}) such a framework 
reduces to the zero obstacle case since in this setting $\widetilde{\varphi}$ turns out 
to be exactly the $a$-harmonic extension of $\varphi$.
Thus, in view of \cite[Theorems~1.1-1.3]{FoSp17} we may deduce the following result.
\begin{theorem}\label{t:phi analytic}
Let $u$ be a solution to the fractional obstacle problem 
\eqref{e:ob-pb global} with obstacle function $\varphi:\R^n\to\R$ such that 
\begin{equation}\label{e:anal}
\{\varphi>0\}\subset\subset\R^n,\quad 
\textrm{$\varphi$ is real analytic on $\{\varphi>0\}$}.
\end{equation}
Then, 
\begin{itemize}
 \item[(i)] the free boundary $\Gamma_\varphi(u)$ has finite 
 $(n-1)$-dimensional Minkowski content: more precisely, there exists a constant $C>0$ such that
\begin{equation}\label{e:misura1}
\cL^{n+1}\big(\cT_r(\Gamma_\varphi(u))\big) \leq C\,r^2
\quad \forall\; r \in (0,1),
\end{equation}
where $\cT_r(\Gamma_\varphi(u)) := \{ x \in \R^{n+1}: \dist(x,\Gamma(u)) <r\}$;

\item[(ii)] the free boundary $\Gamma_\varphi(u)$ is $(n-1)$-rectifiable, 
\ie~there exist at most countably many  $C^1$-regular submanifolds 
$M_i \subset \R^n$ of dimension $n-1$ such that 
\begin{equation}\label{e:rect1}
\cH^{n-1}\big(\Gamma_\varphi(u) \setminus \cup_{i\in\N} M_i\big) = 0.
\end{equation}
\end{itemize}
Moreover, there exists a subset $\Sigma(u)\subset \Gamma_\varphi(u)$ with Hausdorff 
dimension at most $n-2$ such that for every $x_0\in \Gamma_\varphi(u)\setminus \Sigma(u)$ 
the infinitesimal homogeneity $\lambda$ of $u$ at $x_0$ belongs to 
$\{2m, 2m-1+s, 2m+2s\}_{m\in\N\setminus\{0\}}$.
\end{theorem}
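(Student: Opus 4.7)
The plan is to reduce the non-zero obstacle problem, under the analyticity hypothesis \eqref{e:anal}, to the zero-obstacle case already treated in \cite{FoSp17}, and then to transcribe items (i)--(iii) directly from \cite[Theorems~1.1--1.3]{FoSp17}. The key observation, hinted at just before the statement, is that analyticity of $\varphi$ together with the compactness of $\{\varphi>0\}$ allows one to construct an even $a$-harmonic extension $\widetilde{\varphi}:\R^{n+1}\to\R$ of $\varphi$, i.e.\ a function in $H^1_{\loc}(\R^{n+1},\dm)$, even in $x_{n+1}$, with trace $\varphi$ on the thin hyperplane $\{x_{n+1}=0\}$ and satisfying $L_a\widetilde{\varphi}=0$ in all of $\R^{n+1}$ (the singularity of the weight on the hyperplane being absorbed by the evenness, in the spirit of \cite{CaSi07}). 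I would build $\widetilde{\varphi}$ via the Poisson kernel for $L_a$ and use analyticity of $\varphi$ to propagate analyticity of $\widetilde{\varphi}$ across $\{x_{n+1}=0\}$, while compactness of $\{\varphi>0\}$ gives enough decay to stay within the functional framework of \eqref{e:ob-pb global}.

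With $\widetilde{\varphi}$ at hand, the second step is to set $v:=u-\widetilde{\varphi}$ and verify that $v$ solves the zero-obstacle analogue of \eqref{e:ob-pb global}. All four conditions transfer: $v(\cdot,0)\geq 0$ on the thin hyperplane, evenness in $x_{n+1}$ is preserved, $L_a v=L_a u=0$ off the coincidence set (since $L_a\widetilde{\varphi}\equiv 0$ even as a distribution on $\R^{n+1}$ thanks to the analytic smoothness of $\widetilde{\varphi}$ across the hyperplane), and the global distributional inequality $L_a v\leq 0$ follows likewise. In particular
\[
\Lambda_0(v)=\Lambda_\varphi(u),\qquad \Gamma_0(v)=\Gamma_\varphi(u).
\]
With the natural choice of the auxiliary function $\varphi_{x_0}$ forced by the local Taylor expansion of $\widetilde{\varphi}$ at $x_0$, the rescalings $u_{x_0,r}$ of the excerpt agree with the rescalings of $v$ used in \cite{FoSp17}, so the blow-up set $\textup{BU}(x_0)$, the Almgren-type frequency $\lambda(x_0)$, and the decomposition $\reg\cup\sing\cup\other$ attached to $u$ coincide with those attached to $v$.

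Once the reduction is established, items (i) and (ii) of the theorem are immediate applications of \cite[Theorems~1.1--1.2]{FoSp17} to $v$, giving the Minkowski bound \eqref{e:misura1} and the rectifiability statement \eqref{e:rect1}; and the final assertion is \cite[Theorem~1.3]{FoSp17} applied to $v$, yielding the exceptional set $\Sigma(u)=\Sigma(v)$ of Hausdorff dimension at most $n-2$ and the admissible frequencies $\{2m,\,2m-1+s,\,2m+2s\}_{m\in\N\setminus\{0\}}$ off $\Sigma(u)$. The only genuine obstacle in the proof is therefore the preliminary construction of $\widetilde{\varphi}$: showing simultaneously that it is $a$-harmonic, even, and analytic across the thin hyperplane, with decay compatible with $\dot{H}^s$-type estimates. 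Once this is arranged, no new free-boundary analysis is needed, and the technical heart of the paper can be reserved for the strictly harder $C^{k+1}$ case where such a clean reduction is unavailable.
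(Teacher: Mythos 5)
There is a genuine gap in the first (and, as you say, the only substantive) step: the global even $a$-harmonic extension $\widetilde\varphi$ you propose to build via the Poisson kernel for $L_a$ does not exist in general. The even reflection of the Poisson extension $P[\varphi]$ is $a$-harmonic in $\{x_{n+1}\neq 0\}$, but across the thin hyperplane it satisfies
$\lim_{x_{n+1}\downarrow 0^+} x_{n+1}^a\,\partial_{n+1}P[\varphi](x',x_{n+1})=-c_{n,s}\,(-\Delta)^s\varphi(x')$,
which is precisely the content of the Caffarelli--Silvestre correspondence. Hence $L_a$ of the even reflection carries a singular part proportional to $(-\Delta)^s\varphi\,\cH^n\res\{x_{n+1}=0\}$, and this vanishes only if $(-\Delta)^s\varphi\equiv 0$, which (by unique continuation for the fractional Laplacian) forces $\varphi\equiv 0$. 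So the claim that ``$L_a\widetilde\varphi\equiv 0$ even as a distribution on $\R^{n+1}$ thanks to the analytic smoothness of $\widetilde\varphi$ across the hyperplane'' is circular: that smoothness is exactly what fails for the Poisson extension, and local analyticity of $\varphi$ cannot repair it, since the weighted normal derivative is governed by the nonlocal quantity $(-\Delta)^s\varphi$. A further point: \eqref{e:anal} only assumes $\varphi$ analytic on the open set $\{\varphi>0\}$, not on all of $\R^n$, so even a hypothetical analytic continuation argument could not be run globally.

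The correct reduction, which is the one the paper carries out, is purely local. One first observes via the maximum principle that $u(\cdot,0)>0$, hence $\Gamma_\varphi(u)$ is a compact subset of $\{\varphi>0\}\times\{0\}$, where $\varphi$ is analytic. Then, for each $x_0$ there, one builds an extension $\mathscr{E}_{x_0}[\varphi]$ on a small ball $B_r(x_0)$ by extending the power series of $\varphi$ term by term, using the explicit even polynomial $a$-harmonic extension of each homogeneous polynomial (Lemmas~\ref{l:grufolo polinomio} and \ref{l:grufolo}); this object is even, $a$-harmonic and analytic across the hyperplane, but only on $B_r(x_0)$ (the radius being the radius of convergence, uniform on compacts). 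Covering $\Gamma_\varphi(u)$ by finitely many such balls and applying the (local) statements of \cite[Theorems~1.1--1.3]{FoSp17} to $u-\mathscr{E}_{x_i}[\varphi]$ in each ball yields (i), (ii) and the frequency classification. Your second and third steps (transfer of the coincidence set, of the blow-ups and of the frequency, and the invocation of \cite{FoSp17}) are fine once the extension is understood locally, so the fix is to replace the global Poisson construction by this finite cover of local power-series extensions.
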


The analysis is more involved in case $\varphi$ is not analytic,
since one cannot in principle avoid contact points of infinite order
between the solution and the obstacle, and the free boundary
can be locally an arbitrary compact set $K\subset \R^n$ (explicit examples are provided in \cite{FR19}).
In view of this, we follow the existing literature and we 
consider only those points in the free boundary in which $u$ has order of contact with 
$\varphi$ less than $k+1$:
given $u$ a solution to the fractional obstacle problem 
\eqref{e:ob-pb global} and given a constant $\theta\in(0,1)$ we set
\begin{equation}\label{e:Hgrowth intro}
\Gamma_{\varphi,\theta}(u)
:=\big\{x_0\in \Gamma_\varphi(u):\, \liminf_{r\downarrow 0}
r^{-(n+a+2(k+1-\theta))}H_{u_{x_0}}(r)>0 \big\},
\end{equation}
where $H_{u_{x_0}}$ is defined in \eqref{e:Hux0} and it is 
related to the $L^2(\partial B_r,\dmp)$ norm of
$u_{x_0,r}$
(cf. Section~\ref{ss:cikappagamma} for more details).
For this subset of points of the free boundary we can still prove some of the results stated in Theorem~\ref{t:phi analytic}.
\begin{theorem}\label{t:phi Ck}
Let $u$ be a solution to the fractional obstacle problem 
\eqref{e:ob-pb global} with obstacle function 
$\varphi\in C^{k+1}(\R^n)$, $k\geq 2$, and let $\theta\in(0,1)$. 
Then, $\Gamma_{\varphi,\theta}(u)$ is $(n-1)$-rectifiable.
%
Moreover, there exists a subset $\Sigma_\theta(u)\subset \Gamma_{\varphi,\theta}(u)$ 
with Hausdorff dimension at most $n-2$ such that for every $x_0\in 
\Gamma_{\varphi,\theta}(u)\setminus \Sigma_\theta(u)$ 
the infinitesimal homogeneity $\lambda$ of $u$ at $x_0$ belongs to 
$\{2m, 2m-1+s, 2m+2s\}_{m\in\N\setminus\{0\}}$.
\end{theorem}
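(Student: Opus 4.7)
The plan is to mimic the strategy that reduces Theorem~\ref{t:phi analytic} to the zero obstacle case via the $a$-harmonic extension, but to perform the reduction only locally at each free boundary point and only up to a polynomial error of order $k+1$. This forces us to restrict the analysis to those free boundary points where the non-degeneracy of $u$ at $x_0$ is strong enough to dominate such an error, which is exactly the content of the frequency condition built into the definition of $\Gamma_{\varphi,\theta}(u)$.

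First I would fix $x_0\in \Gamma_{\varphi,\theta}(u)$ and replace $\varphi$ by a centred version $\varphi_{x_0}$, built from a Taylor expansion of $\varphi$ at $x_0$ up to order $k$ and then extended in an $L_a$-harmonic way to a neighbourhood in $\R^{n+1}$. The difference $v_{x_0}:=u-\varphi_{x_0}$ then satisfies an obstacle-type problem with zero obstacle, but with a non-vanishing right hand side whose $L^2$-norm on spheres of radius $r$ decays like $r^{k+1}$ times a factor depending on $\|\varphi\|_{C^{k+1}}$. For $v_{x_0}$ I would establish an almost-monotonicity formula for the Almgren-type frequency $N_{v_{x_0}}(r)$ of the same form as in the zero obstacle case, with additional error terms that are infinitesimal in $r$ provided the frequency stays strictly below $k+1-\theta$. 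The defining inequality of $\Gamma_{\varphi,\theta}(u)$ yields a quantitative lower bound on the normalization $H_{u_{x_0}}$, which translates into $\lambda(x_0)\leq k+1-\theta$, so that the limit $\lambda(x_0):=\lim_{r\downarrow 0}N_{v_{x_0}}(r)$ exists and the rescalings $u_{x_0,r}$ converge, up to subsequences, to nontrivial $\lambda(x_0)$-homogeneous global solutions of the zero obstacle fractional problem by the compactness arguments of \cite{CaSaSi08, FoSp17}.

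Once this local reduction is available, the structure of $\Gamma_{\varphi,\theta}(u)$ is governed by the same frequency classification as in the zero obstacle case on the portion with $\lambda<k+1-\theta$: the admissible values are $1+s$ together with $\{2m,\,2m-1+s,\,2m+2s\}_{m\in\N\setminus\{0\}}$, plus an anomalous set of points whose blow-ups have a genuinely higher-dimensional spine. A Federer dimension-reduction argument, applied to the closed homogeneous family of admissible blow-ups, then yields a subset $\Sigma_\theta(u)\subset \Gamma_{\varphi,\theta}(u)$ with $\dim_\cH \Sigma_\theta(u)\leq n-2$ outside of which $\lambda(x_0)\in\{2m,\,2m-1+s,\,2m+2s\}_{m\geq 1}$, giving the second conclusion. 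For the $(n-1)$-rectifiability I would transfer to $\Gamma_{\varphi,\theta}(u)$ the quantitative stratification plus rectifiable-Reifenberg scheme of Naber-Valtorta used in \cite{FoSp17}: decompose according to the frequency value and the spine dimension, and apply the rectifiability criterion using the quantitative rigidity provided by the almost-monotonicity of $N_{v_{x_0}}$, whose error terms are negligible with respect to the leading $r^{\lambda(x_0)}$ behaviour on $\Gamma_{\varphi,\theta}(u)$.

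The hard part will be ensuring that the almost-monotonicity of $N_{v_{x_0}}$ and the resulting compactness and rigidity statements are sufficiently uniform in the base point $x_0$ to feed the Naber-Valtorta machinery. Unlike the analytic setting, where $u-\widetilde{\varphi}$ solves the zero obstacle problem exactly and the results of \cite{FoSp17} apply verbatim, here the inhomogeneous term in the Euler-Lagrange equation for $v_{x_0}$ has size $r^{k+1}$ while the non-degenerate part of $v_{x_0}$ is only of size $r^{k+1-\theta}$; preserving this $\theta$-gap uniformly along sequences of base points and scales is what replaces the exact cancellation available in the analytic case and is the true technical heart of the argument.
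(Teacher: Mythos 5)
Your proposal follows essentially the same route as the paper: the local reduction $u_{x_0}=u-\varphi_{x_0}$ via the order-$k$ Taylor polynomial and its $a$-harmonic extension, the almost-monotonicity of the Almgren-type frequency with error terms controlled by the $\theta$-gap on the (countably many) subsets where $H_{u_{x_0}}(r)\geq \delta\, r^{n+a+2(k+1-\theta)}$, compactness of the rescalings, the Naber--Valtorta rectifiable-Reifenberg scheme for the $(n-1)$-rectifiability, and the classification of blow-ups outside a set of dimension $n-2$. You also correctly single out the genuine novelty of the nonconstant-obstacle case --- the uniform control of the spatial oscillation of the point-dependent frequency needed to feed the Reifenberg machinery --- which is precisely what Proposition~\ref{p:D_x frequency} of the paper supplies.
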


This note extends the results of \cite{FoSp17} to the case of nonconstant obstacles.
It is clear by the examples of arbitrary compact sets as contact sets of suitable 
solutions of the problem, that in general the free boundary for nonconstant smooth 
obstacles does not possess any structure and that the key ingredient for the analysis 
of the free boundary is the \textit{analiticity} of the obstacles as shown in Theorem~\ref{t:phi analytic}.
Nevertheless, for a subset of the free boundary, characterized as those points of 
finite order of contact, e.g. the points $\Gamma_{\varphi,\theta}$ already considered 
in the literature (see \cite{GaRos17}), a partial regularity still holds even in 
the framework of non analytic obstacles, as proven in Theorem~\ref{t:phi Ck}.
The main novelty of this paper with respect to \cite{FoSp17} consists in the analysis 
of the spatial dependence of the frequency for nonconstant obstacles: indeed, in this case 
the frequency is defined differently from point to point, by taking into account the geometry 
of the obstacle itself. It is not at all evident to which extent the oscillation of the frequency can be controlled. 
The results of Section~4 show that this kind of estimates are not completely rigid and extend to nonflat obstacles. 
Hence, this paper contributes to the program of broadening the results initially proven for the Signorini problem 
with zero obstacles to the case of the obstacle problem for the fractional Laplacian (see e.g. \cite{AtCaSa08, GaRos17}), 
providing a generalization of the known results on the structure of the free boundary firstly proven in \cite{FoSp17}.

%
%

%

\section{Analytic obstacles}

In this section we deal with analytic obstacles. 
We report first on some results related to the Caffarelli-Silvestre $a$-harmonic 
extension argument that will be instrumental to reduce the analytic type fractional 
obstacle problem to the lower dimensional obstacle problem. 
We provide then the proof of Theorem~\ref{t:phi analytic}. 

\subsection{Extension results}\label{ss:analytic}

We start off stating a lemma in which it is proved that there exists a canonical $a$-harmonic extension 
of a polynomial which is a polynomial itself (see \cite[Lemma~5.2]{GaRos17}).
We denote by $\mathscr{P}_l(\R^n)$ the finite dimensional vector space of homogeneous polynomials
of degree $l\in \N$.

\begin{lemma}\label{l:grufolo polinomio}
For every $l\in \N$, there exists a unique linear extension operator 
$\mathscr{E}_l:\mathscr{P}_l(\R^n)\to\mathscr{P}_l(\R^{n+1})$
such that for every $p\in \mathscr{P}_l(\R^n)$ we have
 \[
  \begin{cases}
   -\div(|x_{n+1}|^a\nabla\mathscr{E}_l[p])=0  & \text{in }\,\mathscr{D}'(\R^{n+1}),\cr
   \mathscr{E}_l[p](x',0)=p(x') & \textrm{for all $x'\in\R^n$}, \cr
   \mathscr{E}_l[p](x',-x_{n+1})= \mathscr{E}_l[p](x',x_{n+1}) & 
   \textrm{for all $x\in \R^{n+1}$}.
  \end{cases}
 \]
\end{lemma}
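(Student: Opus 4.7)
The plan is to build $\mathscr{E}_l[p]$ via an even polynomial ansatz in the transverse variable $x_{n+1}$, fixing its coefficients by killing $L_a$ order by order; existence and uniqueness will drop out together. Any $v\in\mathscr{P}_l(\R^{n+1})$ that is even in $x_{n+1}$ and whose trace on $\{x_{n+1}=0\}$ equals $p$ must take the form
\[
v(x',x_{n+1})=\sum_{j=0}^{\lfloor l/2\rfloor}x_{n+1}^{2j}\,q_j(x'),\qquad q_0=p,\quad q_j\in\mathscr{P}_{l-2j}(\R^n).
\]
Thus this ansatz is the most general possible, and once the $q_j$'s are shown to be uniquely determined, uniqueness of $\mathscr{E}_l[p]$ follows for free.

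For the main step, differentiate on $\{x_{n+1}\neq 0\}$, where the weighted divergence coincides with the classical computation, obtaining
\[
L_a v(x)=|x_{n+1}|^a\sum_{j=0}^{\lfloor l/2\rfloor}\Bigl[2j(2j-1+a)\,x_{n+1}^{2j-2}\,q_j(x')+x_{n+1}^{2j}\,\Delta_{x'}q_j(x')\Bigr],
\]
with the convention that the $j=0$ term in the first summand (which carries a factor $2j=0$) is zero. Equating to zero the coefficient of each power $x_{n+1}^{2k}$ yields the triangular recursion
\[
q_{k+1}=-\frac{1}{2(k+1)(2k+1+a)}\,\Delta_{x'}q_k,\qquad 0\le k\le\lfloor l/2\rfloor-1,
\]
which is well defined because $2(k+1)(2k+1+a)\neq 0$ for all $k\ge 0$ and all $a\in(-1,1)$. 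Combined with $q_0=p$, this forces each $q_j$ to be a fixed scalar multiple of $\Delta_{x'}^{\,j}p\in\mathscr{P}_{l-2j}(\R^n)$, so $\mathscr{E}_l[p]$ is homogeneous of total degree $l$ in $(x',x_{n+1})$. Linearity in $p$ is immediate from the linearity of the recursion.

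The only point that warrants verification is that the pointwise identity $L_a v=0$ on $\{x_{n+1}\neq 0\}$ upgrades to the distributional statement on all of $\R^{n+1}$. This is routine rather than genuinely difficult: $v$ is a smooth polynomial, the weight $|x_{n+1}|^a$ is locally integrable since $a>-1$, and, crucially, $\partial_{n+1}v=\sum_{j\ge 1}2j\,x_{n+1}^{2j-1}\,q_j(x')$ vanishes identically on $\{x_{n+1}=0\}$. Consequently $|x_{n+1}|^a\,\partial_{n+1}v$ is continuous across the hyperplane, so integration by parts against a test function in $\mathscr{D}(\R^{n+1})$ generates no singular boundary contribution and the classical identity passes to $L_a\mathscr{E}_l[p]=0$ in $\mathscr{D}'(\R^{n+1})$. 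I do not expect any serious obstacle in the argument: all the real content lies in the recursion, which the weight $|x_{n+1}|^a$ keeps nondegenerate on the full range $a\in(-1,1)$.
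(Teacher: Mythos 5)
Your proposal is correct and follows essentially the same route as the paper: the same even polynomial ansatz $\sum_j x_{n+1}^{2j}q_j(x')$ with the same recursion (your coefficient $2(k+1)(2k+1+a)$ equals the paper's $4j(j-s)$ since $a=1-2s$), and you merely spell out the "easy to verify" steps — the nondegeneracy of the denominators and the absence of a singular contribution across $\{x_{n+1}=0\}$ — that the paper leaves to the reader.
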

\begin{proof}
Let $p\in \mathscr{P}_l(\R^n)$ and set
\[
\mathscr{E}_l[p](x',x_{n+1}) := \sum_{j=0}^{\lfloor\sfrac{l}{2}\rfloor}
p_{2j}(x') \,x_{n+1}^{2j},
\]
with $p_{2j}(x'):= -\frac{1}{4j(j-s)} \triangle p_{2j-2}(x')$ if $j\in\{1,\ldots,\lfloor\sfrac{l}{2}\rfloor\}$ 
and $p_0 := p$. It is then easy to verify that $\mathscr{E}_l$ satisfies all the stated properties.
\end{proof}
\begin{remark}\label{r:continuity ext}
 In particular, $\mathscr{E}_l$ is a continuous operator, $l\in\N$. 
 We will use in what follows that there exists a constant $C=C(n,l)>0$ 
 such that for every $p\in \mathscr{P}_l(\R^n)$ and for every $r>0$
\begin{equation}\label{e:continuity ext}
\|\mathscr{E}_l[p]\|_{L^\infty(B_r)}\leq C\,\|p\|_{L^\infty(B_r')}.
\end{equation}
\end{remark}

We provide next the main result that reduces locally the analytic 
case to the zero obstacle case (cf. \cite[Lemma~5.1]{GaRos17}).
\begin{lemma}\label{l:grufolo}
 Let $\varphi:\Omega\to\R$ be analytic, $\Omega\subset\R^n$ open. Then for all $K\subset \subset\Omega\times\{0\}$ there exists $r>0$
 such that, for every $x_0 \in K$, there exists a function 
 $\mathscr{E}_{x_0}[\varphi]:\,B_r(x_0)\to\R$ such that 
 \begin{itemize}
  \item[(i)]  $-\div(|x_{n+1}|^a\nabla \mathscr{E}_{x_0}[\varphi])=0$ in $\mathscr{D}'\big(B_r(x_0)\big)$;
  \item[(ii)] $\mathscr{E}_{x_0}[\varphi](x',0)=\varphi(x')$ $\forall\;(x',0)\in B_r(x_0)$;
  \item[(iii)] $\mathscr{E}_{x_0}[\varphi]$ is analytic in $B_r(x_0)$.
 \end{itemize}
\end{lemma}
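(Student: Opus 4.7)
The plan is to define $\mathscr{E}_{x_0}[\varphi]$ by summing the polynomial $a$-harmonic extensions of Lemma~\ref{l:grufolo polinomio} applied to the Taylor components of $\varphi$. For each $x_0=(x_0',0)\in K$, write the Taylor expansion $\varphi(x')=\sum_{l\ge 0}P_l^{x_0}(x'-x_0')$ with $P_l^{x_0}\in\mathscr{P}_l(\R^n)$ and set
\[
\mathscr{E}_{x_0}[\varphi](x) \;:=\; \sum_{l=0}^\infty \mathscr{E}_l\big[P_l^{x_0}\big](x-x_0).
\]
Unrolling the recursion in Lemma~\ref{l:grufolo polinomio} yields $(P_l^{x_0})_{2j}=\tfrac{(-\Delta)^j P_l^{x_0}}{4^j j!\,\prod_{k=1}^j(k-s)}$, and rearranging the resulting double sum---legitimate by the absolute convergence verified below---gives the closed form
\[
\mathscr{E}_{x_0}[\varphi](x',x_{n+1}) \;=\; \sum_{j=0}^\infty \frac{(-\Delta_{x'})^j \varphi(x')}{4^j\, j!\,\prod_{k=1}^j(k-s)}\, x_{n+1}^{2j},
\]
where one uses $\sum_l(-\Delta)^j P_l^{x_0}=(-\Delta)^j\varphi$ by termwise differentiation of an analytic function. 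This is the natural Poisson-type representation for the operator $L_a$ in \eqref{e:La} adapted to analytic data.

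The first task is to extract \emph{uniform} analytic estimates on $K$. Covering $K$ by finitely many balls on which Cauchy estimates for $\varphi$ apply produces $r_0,M,A>0$, independent of $x_0\in K$, such that
\[
|\partial^\alpha \varphi(x')| \le M\,\alpha!\,A^{|\alpha|}\qquad \forall\, x'\in B'_{r_0}(x_0'),\ \alpha\in\N^n,
\]
and hence $|(-\Delta_{x'})^j\varphi(x')|\le C(n)\,M\,(2j)!\,A^{2j}$ throughout $B'_{r_0}(x_0')$.

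The crux is convergence of the series. Using the standard bound $\binom{2j}{j}\le 4^j$, equivalent to $(2j)!\le 4^j(j!)^2$, each summand is dominated by
\[
C(n)\,M\,\frac{j!}{\prod_{k=1}^j(k-s)}\,(A\,|x_{n+1}|)^{2j}\;=\;C(n)\,M\,\prod_{k=1}^j\frac{k}{k-s}\,(A\,|x_{n+1}|)^{2j}\;\le\;C(n,s)\,M\,j^{s}\,(A\,|x_{n+1}|)^{2j},
\]
since $\prod_{k=1}^j k/(k-s)\sim C(s)\,j^s$ by Gamma-function asymptotics. Therefore the series converges absolutely and uniformly on every $B_r(x_0)$ with $r<\min\{r_0,\,1/A\}$, a radius independent of $x_0\in K$. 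This delicate interplay between the $(2j)!$ growth coming from iterated Laplacians and the $1/(4^j j!)$ decay in the coefficients is the main technical obstacle and is precisely what forces the analyticity hypothesis on $\varphi$.

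Finally, the three properties follow directly. Property~(ii) is immediate since only the $j=0$ term survives on $\{x_{n+1}=0\}$. Property~(iii) holds by construction, as $\mathscr{E}_{x_0}[\varphi]$ is given by a uniformly convergent power series in $(x'-x_0',x_{n+1})$ on $B_r(x_0)$. For~(i), each partial sum $\sum_{l\le N}\mathscr{E}_l\big[P_l^{x_0}\big](x-x_0)$ is $L_a$-harmonic in $\mathscr{D}'(\R^{n+1})$ by Lemma~\ref{l:grufolo polinomio}, and uniform convergence on compact subsets of $B_r(x_0)$ transfers $L_a$-harmonicity to the limit in the distributional sense.
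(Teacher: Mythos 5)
Your proof is correct and takes essentially the same route as the paper: both define $\mathscr{E}_{x_0}[\varphi]$ by summing the polynomial extensions $\mathscr{E}_l$ of the Taylor components of $\varphi$ and then verify convergence of the resulting series on a ball of radius uniform over $K$. The paper dismisses the convergence as ``easily verified from the explicit formulas,'' whereas you carry out the key estimate explicitly --- the cancellation of the $(2j)!$ growth of iterated Laplacians against the $4^j j!\prod_{k\le j}(k-s)$ in the denominators via $(2j)!\le 4^j(j!)^2$ and uniform Cauchy bounds on $K$ --- which is exactly the computation the paper's one-line claim rests on.
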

\begin{proof}
For every $x_0$ as in the statement, we can locally expand $\varphi$ in power series as
$\varphi(x') = \sum_{\alpha} c_\alpha (x'-x_0)^\alpha$.
Then, we set $\mathscr{E}_{x_0}[\varphi](x) 
:= \sum_{\alpha} c_\alpha \mathscr{E}_{|\alpha|}[p_\alpha](x-x_0)$ where $p_\alpha(x'):= (x')^\alpha$.
From the explicit formulas in the proof of Lemma~\ref{l:grufolo polinomio} it is easily
verified that the power series defining $\mathscr{E}_{x_0}[\varphi]$ is converging in $B_r(x_0)$
and gives an analytic $a$-harmonic extension in $B_r(x_0)$ with $r>0$ uniform on compact sets.
\end{proof}

\subsection{Proof of Theorem~\ref{t:phi analytic}}

Theorem~\ref{t:phi analytic} follows straightforwardly 
from \cite[Theorems~1.1-1.3]{FoSp17}. 
As explained in the introduction $w(x') = u(x',0)$ solves the fractional obstacle problem \eqref{e:ob-pb full space}. 
By the maximum principle $u(x',0) >  0$ for all $x'\in\R^n$. 
Therefore, $\Gamma_\varphi(u)\subset \{\varphi>0\}\subset\subset\R^n$.
Let $r>0$ be the radius in Lemma~\ref{l:grufolo} corresponding to the compact set $\Gamma_\varphi(u)$.
By compactness we  cover $\Gamma_\varphi(u)$ with a finite number of balls 
$B_r(x_i)$, with $x_i\in\R^n\times\{0\}$. In each ball $B_r(x_i)$ we consider
the corresponding function $u-\mathscr{E}_{x_i}[\varphi]$, with 
$\mathscr{E}_{x_i}[\varphi]$ provided by Lemma~\ref{l:grufolo}, and note that 
it solves a zero lower dimensional obstacle problem \eqref{e:ob-pb global}. Hence, we can conclude by
the quoted \cite[Theorems~1.1-1.3]{FoSp17}.

\section{$C^{k+1}$ obstacles}\label{ss:cikappagamma}

In this section we deal with the more demanding case of $C^{k+1}$ obstacles, $k\geq 2$. 
It is convenient to reduce the analysis of \eqref{e:ob-pb global} 
to that of the following localized problem
\begin{equation}\label{e:ob-pb local}
\begin{cases}
u(x',0) \geq \varphi(x') & \text{for }\; (x',0)\in B_1',\\
u(x',x_{n+1}) = u (x', -x_{n+1}) &\text{for }\; x=(x',x_{n+1})\in B_1,\\
\div\big(|x_{n+1}|^a \nabla u(x)\big) = 0  & \text{for }\; 
x \in B_1 \setminus \big\{(x',0)\in B_1'\,:\,
u(x',0) = \varphi(x') \big\},\\
\div\big(|x_{n+1}|^a \nabla u(x)\big) \leq 0 & 
\text{in }\, 
 \mathscr{D}'(B_1)\,,\\
\end{cases}
\end{equation}
for $\varphi\in C^{k+1}(B_1')$. 
In what follows, we shall assume that 
$\normphi_{C^{k+1}(B_1')}\leq 1$. This assumption can be 
easily matched by a simple scaling argument (cf. the proof of 
Theorem \ref{t:phi Ck}).

For any $x_0\in B_1'$ we denote by $T_{k,x_0}[\varphi]$ 
the Taylor polynomial of $\varphi$ of order $k$ at $x_0$: 
\[
 T_{k,x_0}[\varphi](x'):=\sum_{|\alpha|\leq k}\frac{D^\alpha\varphi(x_0)}{\alpha!}p_\alpha(x'-x_0),
\]
where $\alpha=(\alpha_1,\ldots,\alpha_n)\in\N^n$, $D^\alpha=\partial_{x_1}^{\alpha_1}\cdots\partial_{x_n}^{\alpha_n}$,
$p_\alpha(x'):=(x')^\alpha=x_1^{\alpha_1}\cdots x_n^{\alpha_n}$, $|\alpha|:=\alpha_1+\ldots+\alpha_n$
and $\alpha!:=\alpha_1!\cdots\alpha_n!$. 
In what follows we will repeatedly use that (recall that $\normphi_{C^{k+1}(B_1')}\leq 1$)
\begin{equation}\label{e:diff pol funz}
 |T_{k,x_0}[\varphi](x')-\varphi(x')|\leq \frac1{(k+1)!}
 |x'-x_0|^{k+1},
\end{equation}
and 
\begin{equation}\label{e:diff pol funz bis}
 |T_{k,x_0}[\partial_e\varphi](x')-\partial_e\varphi(x')|\leq 2 |x'-x_0|^k
\end{equation}
for all unit vectors $e\in\R^{n+1}$ such that $e\cdot e_{n+1}=0$.

Let then $\mathscr{E}\big[T_{k,x_0}[\varphi]\big]$ be the $a$-harmonic extension of $T_{k,x_0}[\varphi]$, namely
\[
 \mathscr{E}\big[T_{k,x_0}[\varphi]\big](x):=\sum_{|\alpha|\leq k}\frac{D^\alpha\varphi(x_0)}{\alpha!}
 \mathscr{E}_{|\alpha|}[p_\alpha(\cdot-x_0)](x),
\]
where $\mathscr{E}_l$ are the extension operators in Lemma~\ref{l:grufolo polinomio}. 
By the translation invariance of the operator, we point out that 
\begin{equation}\label{e:E traslato}
 \mathscr{E}_{|\alpha|}[p_\alpha(\cdot-x_0)](x)=\mathscr{E}_{|\alpha|}[p_\alpha](x-x_0).
\end{equation}
Set
\begin{equation}\label{e:phitilde}
{\varphi}_{x_0}(x):=\varphi(x')-T_{k,x_0}[\varphi](x')
+\mathscr{E}\big[T_{k,x_0}[\varphi]\big](x), 
\end{equation}
and
\begin{equation}\label{e:vx0}
 u_{x_0}(x):=u(x)-{\varphi}_{x_0}(x).
\end{equation}
Recalling that $\mathscr{E}\big[T_{k,x_0}[\varphi]\big](x',0)=T_{k,x_0}[\varphi](x')$, then 
$\Lambda_\varphi(u)=\{(x',0)\in B_1':\,u_{x_0}(x',0)=0\}$, 
and thus in particular $\Gamma_\varphi(u)=\partial_{B_1'}\{(x',0)\in B_1':\,u_{x_0}(x',0)=0\}$,
where $\partial_{B_1'}$ is the relative boundary in the hyperplane $\{x_{n+1}=0\}$. 
We note that $u_{x_0}$ is not a solution of a fractional obstacle problem as in \eqref{e:ob-pb local} 
with null obstacle, but rather of a related obstacle problem with drift as discussed in what follows
(cf. \eqref{e:funz rescaling}).

First, from the regularity assumption on $\varphi$, from Lemma~\ref{l:grufolo polinomio} and from 
estimate \eqref{e:diff pol funz} we infer that $L_a({\varphi}_{x_0})$ is a function in $L^1(B_1)$ (recall 
the definition of the operator $L_a$ given in \eqref{e:La}). 
Moreover, estimate \eqref{e:diff pol funz} gives for all $x\in B_1\setminus B_1'$ 
\begin{align}\label{e:Laphi}
 |L_a({\varphi}_{x_0}(x))|
 &=|\div\big(|x_{n+1}|^a\nabla(\varphi-T_{k,x_0}[\varphi])(x')\big)|\notag\\
 &=|x_{n+1}|^a|\triangle(\varphi(x')-T_{k,x_0}[\varphi](x'))|\leq 
 |x_{n+1}|^a|x'-x_0|^{k-1}.
\end{align}
In turn, this yields that the distribution $L_a(u_{x_0})$ is given by the sum of 
a function in $L^1(B_1)$ and of a non-positive measure supported on $B_1'$, namely,
\begin{equation}\label{e:Lav}
L_a(u_{x_0}(x))=\div\big(|x_{n+1}|^a \nabla u(x)\big)-L_a({\varphi}_{x_0}(x))\cL^n\res B_1\,.
\end{equation}

The following result resumes the regularity theory developed by Caffarelli, Salsa 
and Silvestre in \cite[Proposition~4.3]{CaSaSi08}. 
\begin{theorem}\label{t:reg}
Let $u$ be a solution to the fractional obstacle problem \eqref{e:ob-pb local} 
in $B_1$, $x_0\in B_r'$, $r\in(0,1)$, then $u_{x_0}\in C^{0,\min\{2s,1\}}(B_{1-r}(x_0))$, 
$\de_{x_i} u_{x_0}\in C^{0,s}(B_{1-r}(x_0))$ for $i=1,\ldots, n$, and
$|x_{n+1}|^a\de_{x_{n+1}} u_{x_0} \in C^{0,\alpha}(B_{1-r}(x_0))$ 
for all $\alpha\in(0,1-s)$. Moreover, there exists a constant 
$C_{\ref{t:reg}}=C_{\ref{t:reg}}(n,a,\alpha,r)
>0$ such that
\begin{align}\label{e:C1_1/2 reg}
&\|u_{x_0}\|_{C^{0,\min\{2s,1\}}(B_{\frac{1-r}{2}}(x_0))}+ 
\|\nabla' u_{x_0}\|_{C^{0,s}(B_{\frac{1-r}{2}}(x_0);R^n)}\notag\\&+ 
\|\sgn(x_{n+1})\,|x_{n+1}|^a\de_{x_{n+1}} u_{x_0}\|_{C^{0,\alpha}(B_{\frac{1-r}{2}}(x_0))}
\leq C_{\ref{t:reg}}\, \|u_{x_0}\|_{L^2(B_{1-r}(x_0), \dm)},
\end{align}
where $\nabla' u_{x_0} = (\de_{x_1}u_{x_0}, \ldots, \de_{x_n}u_{x_0})$
is the horizontal gradient.
\end{theorem}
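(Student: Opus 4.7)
The plan is to deduce Theorem~\ref{t:reg} directly from the regularity theory developed by Caffarelli, Salsa and Silvestre in \cite[Proposition~4.3]{CaSaSi08}, after recognizing $u_{x_0}$ as a solution to an obstacle problem with zero obstacle and bounded weighted inhomogeneity. First I would verify three structural properties of $u_{x_0}$ on $B_1$. The sign condition $u_{x_0}(\cdot,0)=u(\cdot,0)-\varphi\geq 0$ on $B_1'$ follows immediately from the identity $\mathscr{E}\big[T_{k,x_0}[\varphi]\big](x',0)=T_{k,x_0}[\varphi](x')$. The even symmetry $u_{x_0}(x',-x_{n+1})=u_{x_0}(x',x_{n+1})$ is inherited from $u$ and from ${\varphi}_{x_0}$, which in turn is even since the only non-trivial $x_{n+1}$-dependence enters through $\mathscr{E}\big[T_{k,x_0}[\varphi]\big]$ (see Lemma~\ref{l:grufolo polinomio}). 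Finally, from \eqref{e:Lav} the distribution $L_a(u_{x_0})$ decomposes as $f_{x_0}+\mu_{x_0}$ on $B_1$, with $f_{x_0}:=-L_a({\varphi}_{x_0})$ and $\mu_{x_0}$ a non-positive Radon measure supported in the coincidence set; by \eqref{e:Laphi} together with the normalisation $\normphi_{C^{k+1}(B_1')}\leq 1$, the drift obeys the pointwise bound $|f_{x_0}(x)|\leq C|x_{n+1}|^a$ with $C=C(n,k)$, i.e., it is $L^\infty$ relative to the weighted measure.

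Having cast $u_{x_0}$ as a solution of the desired obstacle problem with bounded right-hand side, I would then invoke the CSS interior regularity theorem. This yields the three Hölder regularity statements with constants depending only on $n$, $a$, $\alpha$, the $L^\infty$ norm of the drift, and the $L^2(\dm)$ norm of the solution. A standard scaling/covering argument then localises the estimate to $B_{\sfrac{1-r}{2}}(x_0)$ for $x_0\in B_r'$, yielding \eqref{e:C1_1/2 reg} with $C_{\ref{t:reg}}=C_{\ref{t:reg}}(n,a,\alpha,r)$, the bound on $\|f_{x_0}\|_\infty$ being already absorbed into this constant thanks to the normalisation on $\varphi$.

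The main—essentially the only—technical point is checking that the CSS proof is robust under the weighted bounded inhomogeneity $f_{x_0}$. Since their scheme proceeds through semiconvexity estimates and weighted boundary Harnack-type arguments, both stable under $L^\infty$-bounded perturbations of the equation (as is also exploited in the related analysis of \cite{GaRos17}), no genuinely new ingredients are needed: the proof amounts to a verification of hypotheses and a careful bookkeeping of constants, and so it reduces to quoting \cite[Proposition~4.3]{CaSaSi08}.
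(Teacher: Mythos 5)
Your proposal matches the paper's treatment: the paper gives no argument beyond stating that the theorem ``resumes the regularity theory developed by Caffarelli, Salsa and Silvestre in [Proposition~4.3]'', which is precisely the reduction you carry out (zero obstacle after subtracting $\varphi_{x_0}$, even symmetry, and the bounded weighted drift $|L_a(\varphi_{x_0})|\leq C|x_{n+1}|^a$ from \eqref{e:Laphi}). Your explicit verification of the hypotheses and the remark on the stability of the CSS scheme under the $L^\infty$-bounded inhomogeneity are exactly the implicit content of the paper's citation, so the approaches coincide.
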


In particular, the function $u$ is analytic in $\big\{x_{n+1} >0\big\}$
(see, \eg, \cite{Hopf32}) and the following boundary conditions holds:
%
\begin{align}
\lim_{x_{n+1}\downarrow 0^+} x_{n+1}^a \de_{n+1} u(x',x_{n+1}) = 0
& \quad \text{for }\; x'\in B_1'\setminus \Lambda_{\varphi}(u),\label{e:bd condition1}\\
\lim_{x_{n+1}\downarrow 0^+} x_{n+1}^a \de_{n+1} u(x',x_{n+1}) \leq 0
& \quad \text{for }\; x'\in B_1'.\label{e:bd condition2}
\end{align}
In particular,
\begin{equation}\label{e:bd distribution}
(u(x',0)-\varphi(x'))\,\lim_{x_{n+1}\downarrow 0^+} x_{n+1}^a \de_{n+1} u(x',x_{n+1}) = 0 
\quad \text{for }\; x'\in B_1'. 
\end{equation}
Furthermore, for $B_r(x_0)\subset B_1$ and $x_0\in B_1'$,
an integration by parts implies that 
\begin{align}\label{e:funz rescaling0}
 \int_{B_r(x_0)}&|\nabla u|^2|x_{n+1}|^a\d x
 -\int_{B_r(x_0)}|\nabla \varphi_{x_0}|^2|x_{n+1}|^a\d x\notag\\
 &= \int_{B_r(x_0)}|\nabla u_{x_0}|^2|x_{n+1}|^a\d x
+2 \int_{B_r(x_0)}\nabla u_{x_0}\cdot\nabla \varphi_{x_0}|x_{n+1}|^a\d x\notag\\
&=\int_{B_r(x_0)}|\nabla u_{x_0}|^2|x_{n+1}|^a\d x
-2\int_{B_r(x_0)}u_{x_0} L_a(\varphi_{x_0})\,\d x
+2 \int_{\partial B_r(x_0)}u_{x_0} 
\partial_{\nu}\varphi_{x_0}|x_{n+1}|^a\d x\,,
\end{align}
where in the second equality we have used that 
$\mathscr{E}[T_{k,x_0}(\varphi)]$ is even with respect to the hyperplane $\{x_{n+1}=0\}$ to deduce that
\[
 \lim_{x_{n+1}\to 0}
\partial_{n+1} \varphi_{x_0}(x) |x_{n+1}|^a=0.
\]
In particular, since the last addend in \eqref{e:funz rescaling0} only depends on the boundary values of
$u_{x_0}$, it follows that $u_{x_0}$ is a minimizer of the functional 
\begin{align}\label{e:funz rescaling}
\int_{B_r(x_0)}|\nabla v|^2|x_{n+1}|^a\d x
-2\int_{B_r(x_0)}v L_a(\varphi_{x_0})\,\d x
\end{align}
among all functions $v\in u_{x_0}+H^1_0(B_r(x_0),\dm)$ and satisfying
$v(x',0)\geq 0$ on $B_r'(x_0)$. 
Equivalently, we will say that $u_{x_0}$ is a local minimizer of the functional
in \eqref{e:funz rescaling} subject to null obstacle conditions.

\begin{remark}\label{r:rescal} 
We record here some bounds that shall be employed extensively 
in what follows. By using the linearity and continuity of the extension 
operator $\mathscr{E}_k$ (cf. Remark \ref{r:continuity ext}), together 
with estimate \eqref{e:diff pol funz} we get for all $z\in B_r$
\begin{align}\label{e:stima rescal}
|u_{x_0}(z)&-u_{x_1}(z)|  =|\varphi_{x_0}(z)-\varphi_{x_1}(z)|\notag\\
&\leq \big|T_{k,x_0}[\varphi](z')-T_{k,x_1}[\varphi](z')\big|
+\big|\mathscr{E}\big[T_{k,x_0}[\varphi]\big](z)-\mathscr{E}\big[T_{k,x_1}[\varphi]\big](z)\big|\notag\\
&\leq \|T_{k,x_0}[\varphi]-T_{k,x_1}[\varphi]\|_{L^\infty(B_r')}
+\|\mathscr{E}\big[T_{k,x_0}[\varphi]\big]-\mathscr{E}\big[T_{k,x_1}[\varphi]\big]\|_{L^\infty(B_r)}\notag\\ &
\stackrel{\eqref{e:continuity ext}}{\leq} C\,\|T_{k,x_0}[\varphi]-T_{k,x_1}[\varphi]\|_{L^\infty(B_r')}
\leq C\big(\|\varphi-T_{k,x_0}[\varphi]\|_{L^\infty(B_r')}+
\,\|\varphi-T_{k,x_1}[\varphi]\|_{L^\infty(B_r')}\big)\notag\\
& \stackrel{\eqref{e:diff pol funz}}{\leq} C\,
\big(\max_{z\in B_r'}|z-x_0|^{k+1}+\max_{z\in B_r'}|z-x_1|^{k+1}\big)\,.
\end{align}
for some constant $C=C(n,a,k)>0$. 
Since $\nabla\big(T_{k,x_i}[\varphi]\big)=T_{k-1,x_i}[\nabla\varphi]$, $i\in\{0,1\}$, arguing as above, 
using \eqref{e:diff pol funz bis} rather than \eqref{e:diff pol funz}, we conclude that
\begin{align}\label{e:stima grad rescal}
\big|\nabla(u_{x_0}(z) - u_{x_1}(z))| &=\big|\nabla(\varphi_{x_0}(z)-\varphi_{x_1}(z))\big|
\leq C\,
\big(\max_{z\in B_r'}|z-x_0|^{k}+\max_{z\in B_r'}|z-x_1|^{k}\big)\,,
\end{align}
for some constant $C=C(n,a,k)>0$. 
\end{remark}

\subsection{A frequency type function}\label{s:frequency}

Building upon the approach developed in \cite{FoSp17} we consider
a quantity strictly related to \textit{Almgren's frequency function}
and instrumental for developing the free boundary analysis in the subsequent
sections. Let $\phi:[0,+\infty) \to [0,+\infty)$ be defined by
\[
\phi(t) := 
\begin{cases}
1 & \text{for } \; 0\leq t \leq \frac{1}{2},\\
2\,(1-t) & \text{for } \; \frac{1}{2} < t \leq 1,\\
0 & \text{for } \; 1 < t ,\\
\end{cases}
\]
then given the solution $u$ to \eqref{e:ob-pb local}, a point 
$x_0 \in B_1'$ and the corresponding function $u_{x_0}$ in 
\eqref{e:vx0}, we define for all $0<r<1-|x_0|$
\[
I_{u_{x_0}}(r) :=\frac{r\, G_{u_{x_0}}(r)}{H_{u_{x_0}}(r)}
\]
where
\[
 G_{u_{x_0}}(r):=  - \frac 1r\int \dot{\phi}\Big(\textstyle{\frac{|x-x_0|}{r}}\Big)
\,{u_{x_0}}(x)\nabla {u_{x_0}}(x)\cdot\frac{x-x_0}{|x-x_0|}\,|x_{n+1}|^a\,\d x,
\]
and
\begin{equation}\label{e:Hux0}
H_{u_{x_0}}(r) :=- \int \dot{\phi}\Big(\textstyle{\frac{|x-x_0|}{r}}\Big)
\frac{u_{x_0}^2(x)}{|x-x_0|}\,|x_{n+1}|^a\,\d x.
\end{equation}
Here $\dot{\phi}$ indicates the derivative of $\phi$.
Clearly, $I_{u_{x_0}}(r)$ is well-defined as long as $H_{u_{x_0}}(r)>0$, in what 
follows when writing $I_{u_{x_0}}(r)$ we shall tacitly assume that the latter 
condition is satisfied.

For later convenience, we introduce also the notation
\[
D_{u_{x_0}}(r) := \int \phi\big(\textstyle{\frac{|x-x_0|}{r}}\big)\,
|\nabla {u_{x_0}}(x)|^2\,|x_{n+1}|^a\d x,
\]
and 
\[
E_{u_{x_0}}(r):= \int -\dot{\phi}\Big(\textstyle{\frac{|x-x_0|}{r}}\Big)
\frac{|x-x_0|}{r^2}\Big(\nabla {u_{x_0}}(x) \cdot
\frac{x-x_0}{|x-x_0|}\Big)^2\,|x_{n+1}|^a\, \d x.
\]
In particular, note that for all $r>0$ 
\begin{equation}\label{e:Cauchy Schwarz}
 H_{u_{x_0}}(r) \; E_{u_{x_0}}(r) - G_{u_{x_0}}^2(r)\geq 0
\end{equation}
by Cauchy-Schwarz inequality.


 \begin{remark}
 In case $\varphi=0$, then $u_{x_0}= u$ for all $x_0\in B_1'$ and 
 $G_u=D_u$. Thus, $I_{u_{x_0}}$ boils down to the variant of Almgren's 
 frequency function used in \cite{FoSp17}.
 \end{remark}
 \begin{remark}\label{r:scaling}
 If $u$ is a solution to the fractional obstacle problem \eqref{e:ob-pb local}, 
 then for every  $c>0$, $x_0\in B_1'$ and $r >0$ such that $B_r(x_0)\subset B_1$, 
 the function $\hat{u}(y) := c\,u(x_0 +r\,y)$ solves \eqref{e:ob-pb local} on $B_1$ 
 with obstacle function $\hat{\varphi}(y):= c\,\varphi(x_0 +r\,y)$. 
 Therefore, if $x_1=x_0+ry_1\in B_1'$ we have 
 $T_{k,y_1}[\hat{\varphi}](y')=c\,T_{k,x_1}[\varphi](x_0+ry')$ 
 and $\hat{u}_{y_1}(y)=c\,u_{x_1}(x_0+ry)$. Thus,
 $I_{\hat{u}_{y_1}}(\rho) = I_{u_{x_1}}(\rho\,r)$ for every $\rho \in (0,1)$.

In particular, this shows that the frequency function is scaling invariant, 
in the sequel we will use this property repeatedly. 
 \end{remark}

\subsection{Almost monotonicity of $I_{u_{x_0}}$ at distinguished points}

In this subsection we prove the quais-monotonicity of $I_{u_{x_0}}$ 
for a suitable subset of points of the free boundary.
We prove first some useful identities in a generic point $x_0$ of 
$B'_1$.
\begin{lemma}
Let $u$ be a solution to the fractional obstacle problem \eqref{e:ob-pb local} in $B_1$.
Then, for all $x_0 \in B'_1$ and $t\in(0,1-|x_0|)$, it holds
\begin{gather}
D_{u_{x_0}}(t)=G_{u_{x_0}}(t)
-\int \phi\big(\textstyle{\frac{|x-x_0|}{t}}\big)u_{x_0}(x)L_a(u_{x_0}(x))\d x
\label{e:D}\\
\frac{\d}{\d t}\big(H_{u_{x_0}}(t)\big)
=\frac{n+a}{t}\,H_{u_{x_0}}(t)+2\,G_{u_{x_0}}(t)
\label{e:Hprime} \\
\frac{\d}{\d t}\big(D_{u_{x_0}}(t)\big)
=\frac{n+a-1}{t} \, 
D_{u_{x_0}}(t)+2\,E_{u_{x_0}}(t)
-\frac 2t\,\int \phi\big(\textstyle{\frac{|x-x_0|}{t}}\big)\nabla u_{x_0}(x)\cdot(x-x_0)
 L_a\big(u_{x_0}(x)\big)\d x.\label{e:Dprime}
\end{gather}
\end{lemma}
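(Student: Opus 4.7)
All three identities are weighted Rellich--Pohozaev--type integrations by parts tailored to the cut-off $\phi(|x-x_0|/t)$ and the degenerate weight $|x_{n+1}|^a$. The decomposition \eqref{e:Lav} writes $L_a(u_{x_0})$ as the sum of an $L^1$ function on $B_1$ and a non-positive measure supported on $\Lambda_\varphi(u)\subset B_1'$; moreover, by Theorem~\ref{t:reg} and the obstacle condition $u_{x_0}(\cdot,0)\ge 0$, both $u_{x_0}$ and the horizontal gradient $\nabla' u_{x_0}$ vanish on $\Lambda_\varphi(u)$. This will make the pairings with the singular part of $L_a(u_{x_0})$ that appear below well defined and in fact zero.

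Identity \eqref{e:D} follows by testing the equation for $u_{x_0}$ against the compactly supported function $\psi=\phi(|x-x_0|/t)\,u_{x_0}$. Using $\nabla\phi(|\cdot|/t)=\frac{\dot\phi(|\cdot|/t)}{t}\frac{x-x_0}{|x-x_0|}$, the $|\nabla u_{x_0}|^2$ piece of the Dirichlet form produces $-D_{u_{x_0}}(t)$ and the cross term $u_{x_0}\,\nabla u_{x_0}\cdot\nabla\phi$ produces $G_{u_{x_0}}(t)$. Identity \eqref{e:Hprime} is a consequence of the scaling $y=(x-x_0)/t$, which rewrites $H_{u_{x_0}}(t)$ as $t^{n+a}$ times an integral whose $t$-dependence is confined to the argument of $u_{x_0}(x_0+ty)^2$; differentiating in $t$ and reverting the substitution, the prefactor contributes $\frac{n+a}{t}H_{u_{x_0}}(t)$ and the chain rule produces exactly $2\,G_{u_{x_0}}(t)$.

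Identity \eqref{e:Dprime} is the delicate one. The natural device is a weighted Rellich identity with the vector field $Z:=|\nabla u_{x_0}|^2(x-x_0)-2(\nabla u_{x_0}\cdot(x-x_0))\,\nabla u_{x_0}$. A direct pointwise computation in $\{x_{n+1}\neq 0\}$, where $u_{x_0}$ is smooth, combines the classical Pohozaev divergence with the weight contribution $a|x_{n+1}|^{a-1}\sgn(x_{n+1})\,Z\cdot e_{n+1}$ into
\[
\div(|x_{n+1}|^a Z)=(n+a-1)\,|\nabla u_{x_0}|^2\,|x_{n+1}|^a-2(\nabla u_{x_0}\cdot(x-x_0))\,L_a(u_{x_0}).
\]
Multiplying by $\phi(|x-x_0|/t)$, integrating by parts and using again the expression for $\nabla\phi$, the left-hand side becomes $t\,D'_{u_{x_0}}(t)-2t\,E_{u_{x_0}}(t)$, and dividing by $t$ yields \eqref{e:Dprime}. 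The main obstacle is rigorously justifying this last calculation: the pointwise identity requires the Hessian of $u_{x_0}$ and is only valid off $\{x_{n+1}=0\}$, while the pairing of $\nabla u_{x_0}\cdot(x-x_0)$ against the singular part of $L_a(u_{x_0})$ lives a priori on the free boundary. I would handle this by integrating on $\{|x_{n+1}|>\eps\}$ and letting $\eps\downarrow 0^+$, controlling the flux at $\{x_{n+1}=\pm\eps\}$ via the H\"older estimates of Theorem~\ref{t:reg} together with \eqref{e:bd condition1}--\eqref{e:bd condition2}; the singular pairing vanishes in the limit because $(x-x_0)\cdot e_{n+1}=0$ on $B_1'$ and $\nabla'u_{x_0}=0$ on $\Lambda_\varphi(u)$.
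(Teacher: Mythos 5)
Your proposal is correct and follows essentially the same route as the paper: \eqref{e:D} is the divergence theorem applied to $V=\phi\,u_{x_0}\nabla u_{x_0}|x_{n+1}|^a$ (equivalently, testing with $\phi\,u_{x_0}$), \eqref{e:Hprime} is the scaling computation, and for \eqref{e:Dprime} the paper's vector field $W$ is exactly $\tfrac12\,\phi(|x-x_0|/t)\,|x_{n+1}|^a Z$, so your Rellich identity and subsequent integration by parts reproduce the paper's computation of $\int \div W=0$. Your justification of the vanishing of the flux/singular pairing on $B_1'$ (via Theorem~\ref{t:reg}, \eqref{e:bd condition1}--\eqref{e:bd distribution}, the evenness of $\mathscr{E}[T_{k,x_0}[\varphi]]$, and the vanishing of $u_{x_0}$ and $\nabla' u_{x_0}$ on $\Lambda_\varphi(u)$) is precisely the argument the paper invokes.
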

\begin{remark}
With an abuse of notation, the integration in the last addends in \eqref{e:D} and \eqref{e:Dprime} is meant with respect 
to the reference measure $L_a(u_{x_0})$. Actually, we use this notation because from the proofs of \eqref{e:D} and 
\eqref{e:Dprime} it turns out that one can consider equivalently its absolutely continuous part $L_a(\varphi_{x_0})$. 
\end{remark}
\begin{proof}
To show \eqref{e:D}, \eqref{e:Hprime} and \eqref{e:Dprime},
we assume without loss of generality that $x_0 = \underline{0}$. 

For \eqref{e:D} we consider the vector field
$V(x) := \phi\big(\textstyle{\frac{|x|}{t}}\big)\,{u_{\underline{0}}}(x)\,\nabla {u_{\underline{0}}}(x)\,|x_{n+1}|^a$. 
Clearly, $V$ has compact support and $V \in C^{\infty}(B_1\setminus B_1',\R^{n+1})$.
Moreover, for $x_{n+1}\neq 0$
\[
V(x)\cdot e_{n+1}=\phi\big(\textstyle{\frac{|x|}{t}}\big)\,{u_{\underline{0}}}(x)\,
\partial_{n+1} u_{\underline{0}}(x)\,|x_{n+1}|^a\,,
\]
so that $\lim_{y\downarrow (x',0^+)}V(y)\cdot e_{n+1}=0$. 
 Indeed, recalling that $\mathscr{E}[T_{k,x_0}(\varphi)]$ is even with respect to the hyperplane 
 $\{x_{n+1}=0\}$ (cf. Lemma~\ref{l:grufolo polinomio}): if $(x',0)\in\Lambda_\varphi(u)$ we exploit the regularity of $u$ resumed in 
 Theorem~\ref{t:reg} to conclude; instead, if 
 $(x',0)\notin\Lambda_\varphi(u)$ it suffices to use \eqref{e:bd condition1}. 
Thus, the distributional divergence of $V$ is the $L^1(B_1)$ function given by
\begin{align*}
\div\, V (x) & =
\phi\big(\textstyle{\frac{|x|}{t}}\big)|\nabla {u_{\underline{0}}}(x)|^2\,|x_{n+1}|^a+ 
\dot{\phi}\big(\textstyle{\frac{|x|}{t}}\big){u_{\underline{0}}}(x)\nabla {u_{\underline{0}}}(x)\cdot\frac{x}{t\,|x|}\,|x_{n+1}|^a\\
&+\phi\big(\textstyle{\frac{|x|}{t}}\big){u_{\underline{0}}}(x)L_a({u_{\underline{0}}}(x)).
\end{align*}
Therefore, \eqref{e:D} follows from the divergence theorem by taking into 
account that $V$ is compactly supported.

Next, \eqref{e:Hprime} is a consequence of \eqref{e:D} and the direct computation
\begin{align*}
\frac{\d}{\d t}\big(H_{u_{\underline{0}}}(t)\big) 
&= \frac{\d}{\d t} \left(-t^{n+a} \int \dot{\phi}(|y|)\, 
\frac{u_{\underline{0}}^2(t\,y)}{|y|}\,|y_{n+1}|^a \,\d y\right)\\
& = \frac{n+a}{t}\,H_{u_{\underline{0}}}(t) - 2\,t^{n+a} \int \dot{\phi}(|y|)\, 
{u_{\underline{0}}}(t\,y)\,\nabla {u_{\underline{0}}}(t\,y)\cdot \frac{y}{|y|}\,|y_{n+1}|^a \,\d y\\
& =\frac{n+a}{t}\,H_{u_{\underline{0}}}(t)+2\,G_{u_{\underline{0}}}(t).
\end{align*}
Finally, to prove \eqref{e:Dprime} we consider the compactly supported vector field
$W \in C^\infty(B_1\setminus B_1',\R^{n+1})$ defined by
\[
W(x) = \left(\frac{|\nabla {u_{\underline{0}}}(x)|^2}{2}x-(\nabla {u_{\underline{0}}}(x)\cdot x)\nabla {u_{\underline{0}}}(x)\right)
\phi\big(\textstyle{\frac{|x|}{t}}\big)|x_{n+1}|^a\,.
\]
Moreover, conditions \eqref{e:bd condition1}-\eqref{e:bd distribution} and Lemma~\ref{l:grufolo polinomio} imply that 
$\lim_{y\downarrow (x',0)}W(y) \cdot e_{n+1} = 0$. 
Thus, $\div\,W$ has no singular part in $B_1'$, and we can compute pointwise the distributional divergence
as follows for $x_{n+1}\neq 0$
\begin{align*}
\div\, W (x)  = & \dot{\phi}\big(\textstyle{\frac{|x|}{t}}\big)
\,\frac{x}{t\,|x|} \cdot\Big(\frac{|\nabla {u_{\underline{0}}}(x)|^2}{2}x-(\nabla {u_{\underline{0}}}(x)\cdot x)\nabla {u_{\underline{0}}}(x)\Big)
|x_{n+1}|^a\\ 
&+ \phi\big(\textstyle{\frac{|x|}{t}}\big)\,\frac{n+a-1}{2}\,|\nabla {u_{\underline{0}}}(x)|^2|x_{n+1}|^a
- \phi\big(\textstyle{\frac{|x|}{t}}\big)\,(\nabla {u_{\underline{0}}}(x)\cdot x)L_a({u_{\underline{0}}}(x)).
\end{align*}
Therefore, we infer that
\begin{align*}
0= \int\div\,W(x)\,\d x  =&\int \dot{\phi}\Big(\textstyle{\frac{|x|}{t}}\Big)
\frac{|x|}{2\,t}\,|\nabla {u_{\underline{0}}}(x)|^2|x_{n+1}|^a\,\d x +
t\,E_{u_{\underline{0}}}(t) + \frac{n+a-1}{2}\,D_{u_{\underline{0}}}(t)\\
&-\int \phi\big(\textstyle{\frac{|x|}{t}}\big)\big(\nabla u_{\underline{0}}(x)\cdot x\big)
 L_a\big(u_{\underline{0}}(x)\big)\d x,
\end{align*}
and we conclude \eqref{e:Dprime} by direct differentiation since 
\[
\frac{\d}{\d t}\big(D_{u_{\underline{0}}}(t)\big)=-\int \dot{\phi}\Big(\textstyle{\frac{|x|}{t}}\Big)
\frac{|x|}{t^2}\,|\nabla {u_{\underline{0}}}(x)|^2|x_{n+1}|^a\,\d x.\qedhere
\]
\end{proof}
As a consequence we derive a first monotonicity formula for $H_{u_{x_0}}$ in $B_1$.
\begin{corollary}\label{c:H}
Let $u$ be a solution to the fractional obstacle problem \eqref{e:ob-pb local}. 
Then, for all $x_0 \in B'_1$ and $0 < r_0 < r_1<1-|x_0|$ such that 
$H_{u_{x_0}}(t)>0$ for all $t\in(r_0,r_1)$, 
we have
\begin{equation}\label{e:monotonia H}
\frac{H_{u_{x_0}}(r_1)}{r_1^{n+a}} = \frac{H_{u_{x_0}}(r_0)}{r_0^{n+a}}\,
e^{2\int_{r_0}^{r_1} \frac{I_{u_{x_0}}(t)}{t} \d t}.
\end{equation}
In particular, if $A_1 \leq I_{u_{x_0}}(t) \leq A_2$ 
for every $t \in (r_0, r_1)$, then 
\begin{gather}
(r_0, r_1)\ni r\mapsto\frac{H_{u_{x_0}}(r)}{r^{n+a + 2A_2}}
\quad\text{is monotone decreasing},\label{e:H1}\\
(r_0, r_1)\ni r\mapsto\frac{H_{u_{x_0}}(r)}{r^{n+a + 2A_1}}
\quad\text{is monotone increasing}.\label{e:H2}
\end{gather}
 Moreover, for all $x_0 \in B'_1$ and $0 < r <1-|x_0|$
 \begin{align}\label{e:L2 vs H}
 \int_{B_r(x_0)}|u_{x_0}|^2|x_{n+1}|^a\,\d x\leq r\,H_{u_{x_0}}(r).
 \end{align}
\end{corollary}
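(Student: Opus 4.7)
Parts (i) and (ii) descend from reading \eqref{e:Hprime} as an ODE for $\log H_{u_{x_0}}$. Substituting $G_{u_{x_0}}(t)=I_{u_{x_0}}(t)\,H_{u_{x_0}}(t)/t$ into \eqref{e:Hprime} and dividing by $H_{u_{x_0}}(t)>0$ recasts the identity as
\[
\frac{\d}{\d t}\log H_{u_{x_0}}(t)=\frac{n+a}{t}+\frac{2\,I_{u_{x_0}}(t)}{t}
\]
on $(r_0,r_1)$. Integrating between $r_0$ and $r_1$ and exponentiating immediately yields \eqref{e:monotonia H}. The same ODE, rewritten as
\[
\frac{\d}{\d r}\log\bigg(\frac{H_{u_{x_0}}(r)}{r^{n+a+2A}}\bigg)=\frac{2\,(I_{u_{x_0}}(r)-A)}{r},
\]
is non-positive when $A=A_2\geq I_{u_{x_0}}$ and non-negative when $A=A_1\leq I_{u_{x_0}}$, giving \eqref{e:H1} and \eqref{e:H2} respectively.

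For \eqref{e:L2 vs H}, my plan is via coarea. Since $x_0\in B_1'$ has $x_{0,n+1}=0$, the profile $h(\rho):=\int_{\partial B_\rho(x_0)}u_{x_0}^2\,|x_{n+1}|^a\,\d\cH^n$ is well defined, and one has
\[
\int_{B_r(x_0)}u_{x_0}^2\,|x_{n+1}|^a\,\d x=\int_0^r h(\rho)\,\d\rho,\qquad H_{u_{x_0}}(r)=2\int_{r/2}^r\frac{h(\rho)}{\rho}\,\d\rho,
\]
the second identity following from the explicit form $-\dot{\phi}=2\,\chi_{(1/2,1)}$. Decomposing $B_r(x_0)$ dyadically into the shells $B_{r/2^k}(x_0)\setminus B_{r/2^{k+1}}(x_0)$ and exploiting $|x-x_0|\geq r/2^{k+1}$ on the $k$-th shell produces, via the same coarea argument applied at scale $r/2^k$, the contribution bound $\tfrac{r}{2^{k+1}}H_{u_{x_0}}(r/2^k)$ for each shell. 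Summing over $k\geq 0$ and invoking the increasing monotonicity in \eqref{e:H2} to control $H_{u_{x_0}}(r/2^k)$ in terms of $H_{u_{x_0}}(r)$ then produces \eqref{e:L2 vs H}.

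The main obstacle is closing the resulting geometric series with constant exactly one: this requires an a priori lower bound on $I_{u_{x_0}}$ of the form $I_{u_{x_0}}(s)\geq-\tfrac{n+a-1}{2}$ for $s\in(0,r)$, so that \eqref{e:H2} forces $H_{u_{x_0}}(r/2^k)\leq 2^{-k}H_{u_{x_0}}(r)$. Such a lower bound can be extracted from the identity
\[
r\,G_{u_{x_0}}(r)=h(r)-h(r/2)-\tfrac{n+a}{2}\,H_{u_{x_0}}(r),
\]
obtained by integrating $\nabla(u_{x_0}^2)\cdot(x-x_0)/|x-x_0|\,|x_{n+1}|^a$ by parts on the annulus $B_r\setminus B_{r/2}(x_0)$ (using $\div\bigl((x-x_0)/|x-x_0|\,|x_{n+1}|^a\bigr)=(n+a)\,|x_{n+1}|^a/|x-x_0|$), combined with the H\"older continuity of $u_{x_0}$ provided by Theorem \ref{t:reg}, which controls the oscillation of $h$ between scales $r$ and $r/2$.
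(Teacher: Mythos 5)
Your treatment of \eqref{e:monotonia H}, \eqref{e:H1} and \eqref{e:H2} --- reading \eqref{e:Hprime} as an ODE for $\log H_{u_{x_0}}$ and integrating --- is exactly the paper's argument. For \eqref{e:L2 vs H} your dyadic decomposition and the shell bound $\tfrac{r}{2^{k+1}}H_{u_{x_0}}(r/2^k)$ are also the intended ones, but you then over-engineer the closure of the series: since the $k$-th shell already carries the weight $2^{-(k+1)}$ and $\sum_{k\geq 0}2^{-(k+1)}=1$, the plain monotonicity $H_{u_{x_0}}(r/2^k)\leq H_{u_{x_0}}(r)$ already yields the constant $1$; this is what the paper uses, invoking \eqref{e:H2} with $A_1=0$. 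The geometric decay $H_{u_{x_0}}(r/2^k)\leq 2^{-k}H_{u_{x_0}}(r)$, i.e.\ the frequency bound $I_{u_{x_0}}\geq-\tfrac{n+a-1}{2}$, is not needed.

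More importantly, the mechanism you propose for that frequency bound does not work. The annulus identity
\[
r\,G_{u_{x_0}}(r)=h(r)-h(\sfrac r2)-\tfrac{n+a}{2}\,H_{u_{x_0}}(r)
\]
is correct (it is \eqref{e:Hprime} in disguise, after writing $H_{u_{x_0}}(t)=2\int_{t/2}^{t}h(\rho)\rho^{-1}\,\d\rho$ and differentiating), but dividing by $H_{u_{x_0}}(r)$ shows that $I_{u_{x_0}}(r)\geq-\tfrac{n+a-1}{2}$ is \emph{equivalent} to the lower bound $h(r)-h(\sfrac r2)\geq\tfrac12\,H_{u_{x_0}}(r)$. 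H\"older continuity of $u_{x_0}$ (Theorem~\ref{t:reg}) controls the oscillation of $h$ between scales only from above; it cannot produce a lower bound on the increment $h(r)-h(\sfrac r2)$, and no such bound holds at an arbitrary $x_0\in B_1'$: for instance, when $n=1$ and $a<0$ the required inequality reads $I_{u_{x_0}}(r)\geq\tfrac{1-n-a}{2}>0$, which fails at points with $u_{x_0}(x_0)\neq0$, where the frequency tends to $0$. So the last step of your plan is a genuine gap; the repair is simply to drop it and use the weaker monotonicity $H_{u_{x_0}}(s)\leq H_{u_{x_0}}(r)$ supplied by \eqref{e:H2}.
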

\begin{proof}
The proof of \eqref{e:monotonia H}
(and hence of \eqref{e:H1} and \eqref{e:H2})
follows from the differential equation in
\eqref{e:Hprime}. 
 The proof of \eqref{e:L2 vs H} is a simple consequence of a dyadic integration argument:
 \begin{align*}
 \int_{B_r(x_0)}|u_{x_0}|^2|x_{n+1}|^a\,\d x
 &= \sum_{j\in\N} \int_{B_{\sfrac{r}{2^j}}(x_0)
 \setminus B_{\sfrac{r}{2^{j+1}}}(x_0)}|u_{x_0}|^2|x_{n+1}|^a\,\d x\\
 &\leq \sum_{j\in\N} \frac{r}{2^j}\,H_{u_{x_0}}\big(\sfrac{r}{2^j}\big)
 \leq r\,H_{u_{x_0}}(r),
 \end{align*}
 where in the last inequality we used that $H_{u_{x_0}}(s)\leq H_{u_{x_0}}(r)$ for 
 $s\leq r$ by \eqref{e:H2} (with $A_1=0$).
\end{proof}

We establish next an auxiliary lemma containing useful bounds for some quantities 
related to the $L^2$-norm of $u_{x_0}$, for points $x_0$ in the contact set.
\begin{lemma}\label{l:estimates}
Let $u$ be a solution to the fractional obstacle problem \eqref{e:ob-pb local} in $B_1$.
Then, there is a positive constant $C_{\ref{l:estimates}}=C_{\ref{l:estimates}}(n,a)>0$ 
such that for every point $x_0 \in \Lambda_\varphi(u)$ we have for all $r\in(0,1-|x_0|)$
\begin{equation}\label{e:HD}
H_{u_{x_0}}(r)\leq C_{\ref{l:estimates}}
 \,\big(r\,D_{u_{x_0}}(r)+
 r^{n+a+2(k+1)}\big),
\end{equation}
 \begin{equation}\label{e:norma L2 D}
\int \phi\big({\textstyle{\frac{|x-x_0|}{r}}}\big)\,
|{u_{x_0}}(x)|^2\,|x_{n+1}|^a\d x \leq C_{\ref{l:estimates}}
\,\big(r^2\,D_{u_{x_0}}(r)+
r^{n+a+1+2(k+1)}\big),
\end{equation}
and 
\begin{equation}\label{e:norma L2 D bis}
\int_{B_r(x_0)\setminus B_{\frac r2}(x_0)}\,
|{u_{x_0}}(x)|^2\,|x_{n+1}|^a\d x \leq C_{\ref{l:estimates}}
\,\big(r^2\,D_{u_{x_0}}(r)+
r^{n+a+1+2(k+1)}\big).
\end{equation}
\end{lemma}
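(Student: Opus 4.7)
The three estimates are interrelated and reducible to \eqref{e:HD}. Indeed, since $|x-x_0|\in[\sfrac{r}{2},r]$ on the annulus $A_r := B_r(x_0)\setminus B_{\sfrac{r}{2}}(x_0)$, and $H_{u_{x_0}}(r) = 2\int_{A_r} u_{x_0}^2|x-x_0|^{-1}|x_{n+1}|^a\,\d x$, one has $\tfrac{r}{2}H_{u_{x_0}}(r)\leq 2\int_{A_r} u_{x_0}^2\dm\leq r\,H_{u_{x_0}}(r)$, so \eqref{e:HD} and \eqref{e:norma L2 D bis} are equivalent modulo constants. Estimate \eqref{e:norma L2 D} would then follow from \eqref{e:norma L2 D bis} by decomposing $B_r(x_0)=\bigcup_{j\geq 0}A_{r/2^j}$, applying \eqref{e:norma L2 D bis} at each dyadic scale $r/2^j$, noting $D_{u_{x_0}}(r/2^j)\leq D_{u_{x_0}}(r)$ (since $\phi$ is decreasing in the radius variable), and summing the resulting geometric series (convergent since $n+a+1+2(k+1)>0$). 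Hence the core task is \eqref{e:HD}.

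For \eqref{e:HD}, my plan is to combine a weighted Pohozaev-type identity with the Euler-Lagrange identity \eqref{e:D}. First I would derive the Pohozaev identity by integrating the divergence of the vector field $V:=u_{x_0}^2(x-x_0)\phi(|x-x_0|/r)|x_{n+1}|^a$ over $\R^{n+1}$; its flux across $\{x_{n+1}=0\}$ vanishes because $V\cdot e_{n+1}\sim\sgn(x_{n+1})|x_{n+1}|^{a+1}\to 0$ for $a>-1$. This yields
\begin{equation*}
(n{+}1{+}a)\int\phi\,u_{x_0}^2\dm \;=\; -2\int\phi\,u_{x_0}\nabla u_{x_0}\cdot(x-x_0)\dm + \tfrac{2}{r}\int_{A_r}|x-x_0|\,u_{x_0}^2\dm,
\end{equation*}
after which Cauchy-Schwarz and Young give the preliminary bound $\int\phi\,u_{x_0}^2\dm\leq C(r^2D_{u_{x_0}}(r)+\int_{A_r}u_{x_0}^2\dm)$. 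Next I would exploit \eqref{e:D}: the decomposition $L_au_{x_0}=L_au-L_a\varphi_{x_0}$, the fact that the nonpositive singular measure $L_au$ is supported on $\Lambda_\varphi(u)$ where $u_{x_0}\equiv 0$ (so its contribution vanishes), and the bound \eqref{e:Laphi} give
\[
D_{u_{x_0}}(r)=G_{u_{x_0}}(r)+\int\phi\,u_{x_0}\,L_a\varphi_{x_0}\,\d x,\quad \Big|\int\phi\, u_{x_0}\,L_a\varphi_{x_0}\,\d x\Big|\leq C\Big(\int_{B_r(x_0)}u_{x_0}^2\dm\Big)^{1/2} r^{k-\sfrac12+(n+a)/2},
\]
via Cauchy-Schwarz and the elementary $\int_{B_r(x_0)}|x-x_0|^{2k-2}\dm\leq Cr^{n+a+2k-1}$. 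I would then combine these with $|G_{u_{x_0}}|\leq\sqrt{E_{u_{x_0}}H_{u_{x_0}}}$ from \eqref{e:Cauchy Schwarz}, the elementary $E_{u_{x_0}}(r)\leq\tfrac{C}{r}D_{u_{x_0}}(r)$ (coming from $|\dot\phi|\leq 2$ and $|x-x_0|\leq r$ on the support), and \eqref{e:L2 vs H} ($\int_{B_r}u_{x_0}^2\dm\leq rH_{u_{x_0}}(r)$). A careful sequence of Young splittings then produces \eqref{e:HD}, with the error $r^{n+a+2(k+1)}$ arising from squaring the factor $r^{k+(n+a)/2}$ after coupling with the extra $r$ from \eqref{e:L2 vs H}.

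The hard part will be closing the estimate: the quantities $H_{u_{x_0}}$, $D_{u_{x_0}}$, $\int\phi\,u_{x_0}^2\dm$ and $\int_{B_r}u_{x_0}^2\dm$ all appear intertwined, and the Young inequalities must be arranged so that every cross-term is absorbed on the correct side with the correct $r$-weight, without creating a circular dependency (in particular, distinguishing the Poincar\'e direction $H\lesssim rD$ from the competing Caccioppoli direction $D\lesssim H/r$, both of which are formally compatible with the identities at hand). As a cleaner alternative I would consider a compactness argument: if \eqref{e:HD} failed along $r_j\to 0$, the rescalings $\tilde v_j(y):=u_{x_0}(x_0+r_jy)/(H_{u_{x_0}}(r_j)/r_j^{n+a})^{1/2}$ would satisfy $H_{\tilde v_j}(1)=1$, $D_{\tilde v_j}(1)\to 0$, and solve rescaled obstacle problems whose obstacles tend to $0$ in $L^\infty$ (since the contradiction hypothesis forces $r_j^{k+1}/(H_{u_{x_0}}(r_j)/r_j^{n+a})^{1/2}\to 0$); the regularity bounds of Theorem~\ref{t:reg} combined with \eqref{e:L2 vs H} furnish uniform $C^{0,\min\{2s,1\}}$ equicontinuity, so along a subsequence $\tilde v_j\to\tilde v_\infty$ uniformly on compact sets, with $D_{\tilde v_\infty}(1)=0$ and $\tilde v_\infty(0,0)=0$ (from the contact condition at $x_0$), forcing $\tilde v_\infty\equiv 0$ and contradicting $H_{\tilde v_\infty}(1)=1$ (which follows from strong $L^2$ convergence on $A_1$).
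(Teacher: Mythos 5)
Your reductions among the three estimates are sound and essentially coincide with what the paper does (the paper writes $H_{u_{x_0}}$ and $D_{u_{x_0}}$ via the coarea formula as in \eqref{e:formule integrate0}--\eqref{e:formule integrate1} and integrates in the radius, which is your dyadic summation in integral form). The problem is the core estimate \eqref{e:HD}. The paper obtains it by integrating over $t\in(\sfrac r2,r)$ the \emph{spherical Poincar\'e inequality} \eqref{e:CSS},
\[
\frac 1t\int_{\de B_t(x_0)} |u_{x_0}|^2\,|x_{n+1}|^a\,\d \cH^{n}
\leq C \int_{B_t(x_0)} |\nabla u_{x_0}|^2\,|x_{n+1}|^a\,\d x+C\,t^{n+a-1+2(k+1)},
\]
which it imports from \cite[Lemma~2.13]{CaSaSi08} and \cite[Lemma~6.3]{GaRos17}; this inequality is precisely where the hypothesis $x_0\in\Lambda_\varphi(u)$ enters. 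Your first route cannot replace it: the Pohozaev identity, \eqref{e:D}, \eqref{e:Cauchy Schwarz}, the bound $E_{u_{x_0}}(r)\leq C\,D_{u_{x_0}}(r)/r$ and \eqref{e:L2 vs H} all hold for \emph{every} $x_0\in B_1'$, whereas \eqref{e:HD} is false for general $x_0$: near a point with $u(x_0,0)>\varphi(x_0)$ the function $u_{x_0}$ is locally close to a nonzero constant, so $r\,D_{u_{x_0}}(r)=O(r^{n+a+2})$ while $H_{u_{x_0}}(r)\sim c\,r^{n+a}$. No arrangement of Young splittings of those ingredients can therefore produce the ``Poincar\'e direction'' $H\lesssim rD$; the obstruction is structural, not just a matter of care.

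Your compactness fallback does use $x_0\in\Lambda_\varphi(u)$ (through $\tilde v_\infty(\underline 0)=0$) and is the right kind of argument, but as written it has a gap exactly where the contradiction must close: you need $H_{\tilde v_j}(1)\to H_{\tilde v_\infty}(1)=0$, i.e.\ strong $L^2(\dm)$ convergence on the annulus $B_1\setminus B_{\sfrac12}$. The hypothesis $D_{\tilde v_j}(1)\to0$ controls $\nabla\tilde v_j$ only on $B_{1-\eps}$ for each fixed $\eps$ (the weight $\phi$ degenerates at $\partial B_1$), and the estimates of Theorem~\ref{t:reg} are interior: with the only available uniform bound $\|\tilde v_j\|_{L^2(B_1,\dm)}\leq 1$ coming from \eqref{e:L2 vs H}, they give equicontinuity on $B_{\sfrac12}$, not up to $\partial B_1$. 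Extending the $L^2$ bound to $B_2$ would require a doubling estimate $H_{u_{x_0}}(2r_j)\leq C\,H_{u_{x_0}}(r_j)$, which in this paper follows from the frequency bounds (Corollary~\ref{c:H}, Proposition~\ref{p:almost monotonicity}) that themselves rest on Lemma~\ref{l:estimates} — a circularity. Hence $L^2$ mass of $\tilde v_j$ may a priori concentrate near $\partial B_1$ and the contradiction with $H_{\tilde v_j}(1)=1$ is not reached. To repair the proof you must either establish the spherical inequality \eqref{e:CSS} directly (as in the cited references) or restructure the blow-up so that the normalizing quantity lives strictly inside the region where compactness is available.
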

\begin{proof}
By the co-area formula for Lipschitz functions we check that 
 \begin{align}
 H_{u_{x_0}}(r) = 2\int_{\frac{r}{2}}^r \frac{\d t}{t}
 \int_{\de B_t(x_0)} |u_{x_0}(x)|^2\,|x_{n+1}|^a\,\d \cH^{n}(x),
 \label{e:formule integrate0}
 \end{align}
 and
\begin{align*}
D_{u_{x_0}}(r) & = 
\int_{B_{\frac r2}(x_0)}|\nabla u_{x_0}(x)|^2|x_{n+1}|^a\,\d x\\
&\quad +\frac 2r\int_{\frac r2}^r \d t
\int_{\partial B_t(x_0)}(r-t)|\nabla u_{x_0}(x)|^2\,|x_{n+1}|^a\,\d \cH^{n}(x).
\end{align*}
Therefore, an integration by parts gives
\begin{align}\label{e:formule integrate1}
D_{u_{x_0}}(r) = \frac{2}{r} \int_{\frac{r}{2}}^r \d t\int_{B_t(x_0)} 
|\nabla u_{x_0}(x)|^2\,|x_{n+1}|^a\,\d x.
\end{align}
By \eqref{e:Lav}, as $x_0\in\Lambda_\varphi(u)$, 
\cite[Lemma~2.13]{CaSaSi08} and \cite[Lemma~6.3]{GaRos17}
yield the Poincar\'e inequality 
\begin{equation}\label{e:CSS}
\frac 1t\int_{\de B_t(x_0)} |u_{x_0}(x)|^2\,|x_{n+1}|^a\,\d \cH^{n}(x)
\leq C\, \int_{B_t(x_0)} |\nabla u_{x_0}(x)|^2\,
|x_{n+1}|^a\,\d x+C
t^{n+a-1+2(k+1)},
\end{equation}
with $C=C(n,a)>0$. Integrating the latter inequality on $(\sfrac r2,r)$ we find \eqref{e:HD}
in view of \eqref{e:formule integrate1}. 
Instead, by first multiplying formula \eqref{e:CSS} by $t$ and then integrating 
over $(0,r)$, we infer
\[
\int_{B_r(x_0)} |u_{x_0}(x)|^2\,|x_{n+1}|^a\,\d x
\leq C\,r^2 \int_{B_r(x_0)} |\nabla u_{x_0}(x)|^2\,
|x_{n+1}|^a\,\d x+C
r^{n+a+1+2(k+1)}.
\]
In conclusion, \eqref{e:norma L2 D} and \eqref{e:norma L2 D bis} follow directly.
\end{proof}

Next we show an explicit expression for the radial derivative 
of $I_{u_{x_0}}$ at all points $x_0\in B_1'$. We follow here \cite[Proposition~2.7]{FoSp17}.
\begin{proposition}\label{p:monotonia+lower}
Let $u$ be a solution to the fractional obstacle problem \eqref{e:ob-pb local} in $B_1$.
Then, if $x_0 \in B_1'$ is such that $H_{u_{x_0}}(t)>0$
for all $t\in[r_0,r_1]$, we have
\begin{equation}\label{e:monotonia freq}
I_{u_{x_0}}(r_1) - I_{u_{x_0}}(r_0)  =
\int_{r_0}^{r_1}\Big(\frac{2\,t}{H_{u_{x_0}}^2(t)} \big(H_{u_{x_0}}(t) \; E_{u_{x_0}}(t) 
- G_{u_{x_0}}^2(t)\big) +R_{u_{x_0}}(t)\Big)\,\d t
\end{equation} 
for $0 < r_0 < r_1<1-|x_0|$, with
\begin{equation}\label{e:Ru0}
 |R_{u_{x_0}}(t)|\leq C_{\ref{p:monotonia+lower}}
\frac{t^{n+a+2k+1}}{H_{u_{x_0}}(t)}
\Big(\Big(\frac{D_{u_{x_0}}(t)}{t^{n+a+2k+1}}\Big)^{\sfrac12}+1\Big)
\end{equation}
and $C_{\ref{p:monotonia+lower}}=C_{\ref{p:monotonia+lower}}(n,a)>0$.
\end{proposition}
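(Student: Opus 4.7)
The plan is to differentiate $I_{u_{x_0}}(t)=tG_{u_{x_0}}(t)/H_{u_{x_0}}(t)$ by the quotient rule, use \eqref{e:Hprime} and \eqref{e:Dprime} to eliminate the derivatives of $H_{u_{x_0}}$ and $D_{u_{x_0}}$, peel off the Cauchy--Schwarz defect $2t(H_{u_{x_0}}E_{u_{x_0}}-G_{u_{x_0}}^2)/H_{u_{x_0}}^2$ as the principal non-negative term, and collect the obstacle-dependent remainders into a single error $R_{u_{x_0}}$ to be estimated via \eqref{e:Laphi}.

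Concretely, combining the quotient rule with \eqref{e:Hprime} yields
\[
\frac{d}{dt}I_{u_{x_0}}(t)=\frac{tG_{u_{x_0}}'(t)-(n+a-1)G_{u_{x_0}}(t)}{H_{u_{x_0}}(t)}-\frac{2tG_{u_{x_0}}^2(t)}{H_{u_{x_0}}^2(t)}.
\]
Next, since $u_{x_0}\equiv 0$ on $\Lambda_\varphi(u)$ and $\nabla'u_{x_0}(\cdot,0)=0$ $\cL^n$-a.e.\ on $\Lambda_\varphi(u)$, the singular part of $L_a(u_{x_0})$ drops out of both \eqref{e:D} and \eqref{e:Dprime}, giving the clean identities
\[
G_{u_{x_0}}(t)=D_{u_{x_0}}(t)-F(t),\qquad tD_{u_{x_0}}'(t)=(n+a-1)D_{u_{x_0}}(t)+2tE_{u_{x_0}}(t)+2\widetilde K(t),
\]
where
\[
F(t):=\int\phi\big(\tfrac{|x-x_0|}{t}\big)u_{x_0}L_a(\varphi_{x_0})\,\d x,\quad
\widetilde K(t):=\int\phi\big(\tfrac{|x-x_0|}{t}\big)\nabla u_{x_0}\cdot(x-x_0)L_a(\varphi_{x_0})\,\d x.
\]
Substituting $tG_{u_{x_0}}'=tD_{u_{x_0}}'-tF'(t)$ in the displayed formula and collecting produces the desired decomposition with
\[
R_{u_{x_0}}(t)=\frac{2\widetilde K(t)-tF'(t)+(n+a-1)F(t)}{H_{u_{x_0}}(t)}.
\]

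It remains to bound each of $\widetilde K(t)$, $F(t)$, and $tF'(t)$ by $C\bigl(t^{(n+a+2k+1)/2}D_{u_{x_0}}(t)^{1/2}+t^{n+a+2k+1}\bigr)$, so that dividing by $H_{u_{x_0}}(t)$ produces exactly \eqref{e:Ru0}. The input is the Taylor-remainder estimate \eqref{e:Laphi}: $|L_a(\varphi_{x_0})|\leq|x_{n+1}|^a|x-x_0|^{k-1}$. For $\widetilde K$, pulling out $|x-x_0|^k\leq t^k$ on the support of $\phi$ and applying Cauchy--Schwarz with respect to the measure $\phi|x_{n+1}|^a\,\d x$, with $\int\phi|x_{n+1}|^a\,\d x\lesssim t^{n+a+1}$, yields $|\widetilde K|\lesssim t^{(n+a+2k+1)/2}D_{u_{x_0}}^{1/2}$. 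For $F$ and $tF'$---the latter localized on $B_t\setminus B_{t/2}$ by the support of $\dot\phi$---the same Cauchy--Schwarz scheme applies once the weighted $L^2$-norm of $u_{x_0}$ is controlled; this is precisely what \eqref{e:norma L2 D} and \eqref{e:norma L2 D bis} supply, producing both the $D^{1/2}$ piece and the lower-order $t^{n+a+2k+1}$ inhomogeneity. Integration in $t$ over $[r_0,r_1]$ then gives \eqref{e:monotonia freq}.

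The main obstacle is the clean reduction at the start: one must verify that the complementarity conditions $u_{x_0}(\cdot,0)=|\nabla'u_{x_0}(\cdot,0)|=0$ $\cL^n$-a.e.\ on $\Lambda_\varphi(u)$ really do kill the contributions of the singular component of $L_a(u_{x_0})$ in both \eqref{e:D} and \eqref{e:Dprime}, so that every integrand appearing in $R_{u_{x_0}}$ is tested against the bounded function $L_a(\varphi_{x_0})$ alone; it is this reduction that makes the Taylor decay $|x-x_0|^{k-1}$ from \eqref{e:Laphi} available. Once this is in place the rest is bookkeeping, and the Poincar\'e-type inequalities \eqref{e:norma L2 D}--\eqref{e:norma L2 D bis} (which require $x_0\in\Lambda_\varphi(u)$, the natural setting here) deliver exactly the two-term structure demanded by \eqref{e:Ru0}.
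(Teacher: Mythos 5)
Your proposal is correct and follows essentially the same route as the paper's proof: differentiate $I_{u_{x_0}}=tG_{u_{x_0}}/H_{u_{x_0}}$, use \eqref{e:D}, \eqref{e:Hprime}, \eqref{e:Dprime} (with the singular part of $L_a(u_{x_0})$ removed by the complementarity conditions, exactly as the remark following those identities records), isolate the Cauchy--Schwarz defect, and bound the three remainder integrals by combining \eqref{e:Laphi} with Cauchy--Schwarz and \eqref{e:norma L2 D}--\eqref{e:norma L2 D bis}. Your explicit expression $R_{u_{x_0}}=\big(2\widetilde K-tF'+(n+a-1)F\big)/H_{u_{x_0}}$ agrees (up to the harmless sign/normalization conventions) with the paper's remainder, and your observation that the Poincar\'e-type inequalities require $x_0\in\Lambda_\varphi(u)$ applies equally to the paper's own argument.
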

\begin{proof}
It is not restrictive to assume $x_0= \underline{0}$.
We use the identities in \eqref{e:D}, \eqref{e:Hprime} and \eqref{e:Dprime} to compute
(the lenghty details are left to the reader)
\begin{align*}
\frac{\d}{\d t}\big(I_{u_{\underline{0}}}(t)\big)&= {I_{u_{\underline{0}}}(t)}
\Big(\frac 1t+\frac{\frac{\d}{\d t}\big(G_{u_{\underline{0}}}(t)\big)}
{G_{u_{\underline{0}}}(t)}-\frac{\frac{\d}{\d t}\big(H_{u_{\underline{0}}}(t)\big)}{H_{u_{\underline{0}}}(t)}\Big)\\
& =2{I_{u_{\underline{0}}}(t)}\Big(\frac{E_{u_{\underline{0}}}(t)}{G_{u_{\underline{0}}}(t)}-\frac{G_{u_{\underline{0}}}(t)}{H_{u_{\underline{0}}}(t)}\Big)
+R_{u_{\underline{0}}}(t),
\end{align*} 
where  
\begin{align*} 
R_{u_{\underline{0}}}(t)&:=-\frac{1}{H_{u_{\underline{0}}}(t)}\int \phi\big(\textstyle{\frac{|x|}{t}}\big)
 \Big((n+a-1){u_{\underline{0}}}(x)+2(\nabla {u_{\underline{0}}}(x)\cdot x)\Big)L_a({u_{\underline{0}}}(x)) \d x\\
& +\frac1t\int\dot{\phi}\Big(\textstyle{\frac{|x|}{t}}\Big)|x|u(x)L_a({u_{\underline{0}}}(x)) \d x\,.
\end{align*}
From this we conclude \eqref{e:monotonia freq} straightforwardly. 

For \eqref{e:Ru0}, we estimate separately each term appearing in the integral 
defining $R_{u_{\underline{0}}}(t)$. We start with
\begin{align}\label{e:vLav}
  \left|\int \phi\big(\textstyle{\frac{|x|}{t}}\big)
 {u_{\underline{0}}}(x)L_a({u_{\underline{0}}}(x))\d x\right| & \stackrel{\eqref{e:Lav}}{\leq} 
 \left|\int \phi\big(\textstyle{\frac{|x|}{t}}\big)
 |{u_{\underline{0}}}(x)||x'|^{k-1}|x_{n+1}|^a\d x\right|\notag\\
 & \leq 
 t^{k-1}\left(\int_{B_t} |x_{n+1}|^a\d x\right)^{\sfrac12}
 \left(\int \phi\big(\textstyle{\frac{|x|}{t}}\big)|{u_{\underline{0}}}(x)|^2 |x_{n+1}|^a\d x\right)^{\sfrac12}
 \notag\\
 & \leq C
 t^{\frac{n+a+1}2+ k-1}
 \left(\int \phi\big(\textstyle{\frac{|x|}{t}}\big)|{u_{\underline{0}}}(x)|^2 |x_{n+1}|^a\d x\right)^{\sfrac12}
 \notag\\
 &\stackrel{\eqref{e:norma L2 D}}{\leq} C
t^{\frac{n+a-1}2+k+1}\Big(D^{\sfrac 12}_{u_{\underline{0}}}(t)+t^{\frac{n+a-1}2+k+1}\Big),
\end{align}
with $C_{\ref{e:vLav}}=C_{\ref{e:vLav}}(n,a)>0$. 
Arguing similarly we infer
\begin{align}\label{e:vLav3}
&\left| \int \phi\big(\textstyle{\frac{|x|}{t}}\big)
(\nabla {u_{\underline{0}}}(x)\cdot x)L_a({u_{\underline{0}}}(x))\d x\right| \leq   
t \int \phi\big(\textstyle{\frac{|x|}{t}}\big)|\nabla {u_{\underline{0}}}(x)||L_a({u_{\underline{0}}}(x))|\d x\notag\\ 
& \stackrel{\eqref{e:Lav}}{\leq} t^k \int \phi\big(\textstyle{\frac{|x|}{t}}\big)|\nabla {u_{\underline{0}}}(x)||x_{n+1}|^a\d x\leq
C
t^{\frac{n+a-1}2+ k+1}D_{u_{\underline{0}}}^{\sfrac 12}(t),
\end{align}
and
\begin{align}\label{e:vLav2}
\left|\frac1t\int\dot{\phi}\Big(\textstyle{\frac{|x|}{t}}\Big)|x|{u_{\underline{0}}}(x)L_a({u_{\underline{0}}}(x)) \d x\right|
& \stackrel{\eqref{e:Lav}}{\leq} 
t^{k-1}\int_{B_t\setminus B_{\sfrac t2}}|{u_{\underline{0}}}(x)| |x_{n+1}|^a \d x\notag\\
& \stackrel{\eqref{e:norma L2 D bis}}{\leq} C\,
t^{\frac{n+a-1}2+k+1}\Big( D^{\sfrac 12}_{u_{\underline{0}}}(t)+t^{\frac{n+a-1}2+k+1}\Big),
\end{align}
with $C=C(n,a)>0$. 
Therefore, \eqref{e:Ru0} follows at once from \eqref{e:vLav}-\eqref{e:vLav2}.
\end{proof}

Estimate \eqref{e:Ru0} turns out to be useful to analyze the subsets of points 
$\Gamma_{\varphi,\theta}(u)$ of $\Gamma_\varphi(u)$, for every $\theta\in(0,1)$ (cf. \eqref{e:Hgrowth intro}). 
With fixed $\theta\in (0,1)$, we then look at points of the free boundary in the subset
\begin{equation}\label{e:Hgrowth}
{\mathscr{Z}_{\varphi,\theta,\delta}}(u):=\big\{x_0\in \Gamma_\varphi(u)\cap B_{\sfrac12}:\, 
H_{u_{x_0}}(r)\geq\delta\, r^{n+a+2(k+1-\theta)}\quad\forall\; r\in(0,\sfrac12)\big\},
\end{equation}
where $\delta>0$ is any. 

\begin{remark}
Note that ${\mathscr{Z}_{\varphi,\theta,\delta}}(u)\subseteq {\mathscr{Z}_{\varphi,\theta,\delta'}}(u)$ 
if $\delta'\leq \delta$. Hence, in what follows it is enough to consider the values of $\delta$ small enough.
\end{remark}

\begin{proposition}\label{p:almost monotonicity}
For every $\delta>0$, there exist $C_{\ref{p:almost monotonicity}},\varrho_{\ref{p:almost monotonicity}}>0$
such that for every $x_0 \in \ZZ(u)$, the function 
$(0,\varrho_{\ref{p:almost monotonicity}}]\ni r\mapsto  e^{C_{\ref{p:almost monotonicity}}
r^{\theta}}I_{u_{x_0}}(r)$ is nondecreasing. 
In particular, the ensuing limits exist finite and are equal 
\begin{equation}\label{e:old new freq}
 \lim_{r\downarrow 0}\frac{rD_{u_{x_0}}(r)}{H_{u_{x_0}}(r)}
 =\lim_{r\downarrow 0} I_{u_{x_0}}(r)=:I_{u_{x_0}}(0^+).
\end{equation}
\end{proposition}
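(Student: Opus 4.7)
The strategy is to integrate the exact representation of $I'_{u_{x_0}}$ implicit in \eqref{e:monotonia freq}, exploit the non-negativity of the Cauchy--Schwarz contribution \eqref{e:Cauchy Schwarz}, and control the remainder $R_{u_{x_0}}(t)$ on $\ZZ(u)$ so as to obtain a one-sided differential inequality for $I_{u_{x_0}}$ that integrates to the claimed exponential monotonicity.

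The core step is to establish, for every $x_0\in\ZZ(u)$ and $t\in(0,\sfrac12)$, a bound of the form
\[
|R_{u_{x_0}}(t)| \leq C\,t^{\theta-1}\bigl(1 + I_{u_{x_0}}(t)\bigr).
\]
I would feed into the estimate \eqref{e:Ru0} two ingredients specific to $\ZZ(u)$: first, the defining lower bound $H_{u_{x_0}}(t)\geq \delta\,t^{n+a+2(k+1-\theta)}$, which turns the factor $t^{n+a+2k+1}/H_{u_{x_0}}(t)$ into a constant multiple of $t^{2\theta-1}$ and, in the square-root term, yields $t^{(n+a+2k)/2}/H_{u_{x_0}}^{1/2}(t) \leq \delta^{-1/2}\,t^{\theta-1}$; second, the Poincar\'e inequality \eqref{e:HD} combined with the identity \eqref{e:D} and the $L^1$-bound \eqref{e:vLav}, from which a Young inequality produces the upper bound
\[
r\,D_{u_{x_0}}(r) \leq C\,\bigl(I_{u_{x_0}}(r) + r^{2\theta}\bigr)\,H_{u_{x_0}}(r).
\]
Substituting into \eqref{e:Ru0} and using the elementary bound $I^{1/2}\leq 1 + I$ concludes the estimate on $R_{u_{x_0}}$.

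Plugging this bound into \eqref{e:monotonia freq} and discarding the non-negative Cauchy--Schwarz term produces, on a small interval $(0,\varrho_{\ref{p:almost monotonicity}}]$ where $H_{u_{x_0}}>0$ by the very definition of $\ZZ(u)$, the differential inequality
\[
\frac{\d}{\d r}\bigl(1 + I_{u_{x_0}}(r)\bigr) \geq -C\,r^{\theta-1}\bigl(1 + I_{u_{x_0}}(r)\bigr),
\]
which integrates to monotonicity of $r\mapsto e^{C r^\theta/\theta}\bigl(1 + I_{u_{x_0}}(r)\bigr)$, hence to the existence of a finite limit for $I_{u_{x_0}}$ as $r\downarrow 0$. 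The stated form with $e^{C_{\ref{p:almost monotonicity}} r^\theta}\,I_{u_{x_0}}(r)$ follows by absorbing the additive term into the exponential factor on a sufficiently small interval, using the standard fact that $I_{u_{x_0}}$ is bounded below at free boundary points. Finally, the equality in \eqref{e:old new freq} between $\lim r\,D_{u_{x_0}}/H_{u_{x_0}}$ and $I_{u_{x_0}}(0^+)$ follows from \eqref{e:D} together with \eqref{e:vLav}, which give $r|D_{u_{x_0}}-G_{u_{x_0}}|/H_{u_{x_0}}\to 0$ as $r\downarrow 0$, once again using the lower bound on $H_{u_{x_0}}$ characterising $\ZZ(u)$.

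The main technical obstacle is the precise matching of exponents in the estimate of $R_{u_{x_0}}$: the powers of $t$ arising from \eqref{e:Ru0}, \eqref{e:vLav}, \eqref{e:vLav3} and \eqref{e:vLav2} must cancel the polynomial lower bound on $H_{u_{x_0}}$ in the definition of $\ZZ(u)$ exactly so as to yield the integrable singularity $t^{\theta-1}$; the exponent $n+a+2(k+1-\theta)$ appearing in \eqref{e:Hgrowth} is calibrated precisely for this cancellation, and any slower decay of $H_{u_{x_0}}$ would produce a non-integrable remainder and destroy the almost-monotonicity.
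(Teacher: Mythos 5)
Your proof is correct and follows essentially the same route as the paper: integrate \eqref{e:monotonia freq}, discard the nonnegative Cauchy--Schwarz term \eqref{e:Cauchy Schwarz}, and control $R_{u_{x_0}}$ by combining \eqref{e:Ru0} with the defining lower bound of $\ZZ(u)$ and Lemma~\ref{l:estimates}. The only cosmetic difference is that you arrive at $|R_{u_{x_0}}(t)|\leq C\,t^{\theta-1}\big(1+I_{u_{x_0}}(t)\big)$ and then absorb the additive term via the frequency lower bound (which indeed follows non-circularly from \eqref{e:HD} and the comparison between $I_{u_{x_0}}$ and $rD_{u_{x_0}}/H_{u_{x_0}}$), whereas the paper obtains the multiplicative bound $|R_{u_{x_0}}(t)|\leq C\,t^{\theta-1}I_{u_{x_0}}(t)$ directly by passing through the lower bound \eqref{e:Ddelta} on $D_{u_{x_0}}$ and the estimate $|G_{u_{x_0}}-D_{u_{x_0}}|\leq C\,r^{\theta}D_{u_{x_0}}$.
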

\begin{proof}
Since $x_0 \in \ZZ(u)$, formula \eqref{e:HD} yields for $r\in(0,\sfrac12)$
\[
C_{\ref{l:estimates}} D_{u_{x_0}}(r) \geq \delta
 r^{n+a-1+2(k+1-\theta)} -C_{\ref{l:estimates}}
 r^{n+a-1+2(k+1)}\,,
\]
therefore, for $\varrho_{\ref{p:almost monotonicity}}$ sufficiently small, we have for all $r\in(0,\varrho_{\ref{p:almost monotonicity}}]$
 \begin{equation}\label{e:Ddelta}
 D_{u_{x_0}}(r) \geq C {r^{n+a-1+2(k+1-\theta)}}.
\end{equation}
In addition, from \eqref{e:D} and \eqref{e:vLav} we get for all $r\in(0,\varrho_{\ref{p:almost monotonicity}}]$, 
if $\varrho_{\ref{p:almost monotonicity}}$ small enough,
\begin{align}\label{e:vLav1}
   |G_{u_{x_0}}(r) - D_{u_{x_0}}(r)|&\stackrel{\eqref{e:D}}{=}
   \left|\int \phi\big(\textstyle{\frac{|x-x_0|}{r}}\big)u_{x_0}(x)L_a(u_{x_0}(x))\d x\right|\notag\\ 
   & \stackrel{\eqref{e:vLav}}{\leq} C_{\ref{e:vLav}}
   D_{u_{x_0}}(r)\Big(
   \frac{r^{n+a+2k+1}}{D_{u_{x_0}}(r)}+\Big(\frac{r^{n+a+2k+1}}{D_{u_{x_0}}(r)}\Big)^{\sfrac 12}\Big)\notag\\ &
   \stackrel{\eqref{e:Ddelta}}{\leq} C
   D_{u_{x_0}}(r)\Big(2Cr^{2\theta}+\big(2Cr^{2\theta}\big)^{\sfrac12}\Big) 
  \leq C
  r^\theta D_{u_{x_0}}(r).
    \end{align}
Therefore, from \eqref{e:Ru0}, if $\varrho_{\ref{p:almost monotonicity}}$ is sufficiently small,
we get for all $r\in(0,\varrho_{\ref{p:almost monotonicity}}]$, 
\begin{align}\label{e:Ru}
 |R_{u_{x_0}}(r)|& \leq C_{\ref{p:monotonia+lower}}\,
 I_{u_{x_0}}(r)\frac{r^{n+a+2k}}{G_{u_{x_0}}(r)}
 \Big(\Big(\frac{D_{u_{x_0}}(r)}{r^{n+a+2k+1}}\Big)^{\sfrac 12}+1\Big)\notag\\
 &\stackrel{\eqref{e:vLav1
 }}{\leq} \,C\,
 \frac{I_{u_{x_0}}(r)}{r}
 \Big(\Big(\frac{r^{n+a+2k+1}}{D_{u_{x_0}}(r)}\Big)^{\sfrac 12}
 +\frac{r^{n+a+2k+1}}{D_{u_{x_0}}(r)}\Big)
 \stackrel{\eqref{e:Ddelta}}{\leq} C\,
 r^{\theta-1}\,I_{u_{x_0}}(r).
 \end{align}
Hence, from \eqref{e:Cauchy Schwarz}, \eqref{e:monotonia freq} and \eqref{e:Ru} we find
 \begin{equation}\label{e:derivata monotonia}
 \frac{\d}{\d r}\big( I_{u_{x_0}}(r)\big)\geq -C
 \,r^{\theta-1}I_{u_{x_0}}(r),
 \end{equation}
and the monotonicity of $(0,\varrho_{\ref{p:almost monotonicity}}]\ni 
r\mapsto e^{C_{\ref{p:almost monotonicity}}
r^{\theta}}I_{u_{x_0}}(r)$ follows by direct integration. 
In addition, we also infer \eqref{e:old new freq}, because
from \eqref{e:vLav1} 
for all $r\in(0,\varrho_{\ref{p:almost monotonicity}}]$ we have
\begin{equation}\label{e:frequencies comparison}
(1-C
r^{\theta}) \frac{rD_{u_{x_0}}(r)}{H_{u_{x_0}}(r)}
\leq I_{u_{x_0}}(r)\leq (1+C
r^{\theta})\frac{rD_{u_{x_0}}(r)}{H_{u_{x_0}}(r)}.
\end{equation}
\end{proof}
\begin{remark}
 The monotonicity for the truncated Almgren's frequency function 
  \[
  r(1+Cr^\theta)\frac{\d}{\d r}\log\max\big\{H_{u_{x_0}}(r),r^{n+a+2(k+1-\theta)}\big\}
 \]
 proved in \cite{CaSaSi08} and \cite{GaRos17} is essentially equivalent to Proposition~\ref{p:almost monotonicity}.
\end{remark}

We derive next an additive quasi-monotonicity formula for the frequency.
\begin{corollary}\label{c:monotone additive}
For every $A,\delta>0$, there exist $C_{\ref{c:monotone additive}}$,  $\varrho_{\ref{c:monotone additive}}>0$ with this property:
if $x_0 \in \ZZ(u)$ and  $I_{u_{x_0}}(\varrho_{\ref{c:monotone additive}}) \leq A$, then 
for all $\Lambda\geq A\,C_{\ref{c:monotone additive}}$ the function
\begin{equation}\label{e:monotone additive}
(0,\varrho_{\ref{c:monotone additive}}]\ni r\mapsto I_{u_{x_0}}(r)
+\Lambda\,r^{\theta}\quad\text{is nondecreasing.}
\end{equation}
\end{corollary}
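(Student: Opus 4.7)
The plan is to bootstrap the multiplicative quasi\-/monotonicity from Proposition~\ref{p:almost monotonicity} into an additive one by using the hypothesis $I_{u_{x_0}}(\varrho_{\ref{c:monotone additive}}) \leq A$ to convert the exponential factor into a uniform pointwise bound on $I_{u_{x_0}}$.

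First, I would choose $\varrho_{\ref{c:monotone additive}}\in (0,\varrho_{\ref{p:almost monotonicity}}]$ so small that $e^{C_{\ref{p:almost monotonicity}}\,\varrho_{\ref{c:monotone additive}}^{\theta}}\leq 2$. For any $x_0\in\ZZ(u)$ with $I_{u_{x_0}}(\varrho_{\ref{c:monotone additive}})\leq A$, Proposition~\ref{p:almost monotonicity} gives that $r\mapsto e^{C_{\ref{p:almost monotonicity}}r^{\theta}} I_{u_{x_0}}(r)$ is nondecreasing on $(0,\varrho_{\ref{c:monotone additive}}]$, hence for every $r\in(0,\varrho_{\ref{c:monotone additive}}]$
\[
I_{u_{x_0}}(r)\leq e^{C_{\ref{p:almost monotonicity}}(\varrho_{\ref{c:monotone additive}}^{\theta}-r^{\theta})} I_{u_{x_0}}(\varrho_{\ref{c:monotone additive}})\leq 2A.
\]

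Next, I would plug this pointwise bound into the differential inequality \eqref{e:derivata monotonia}, which was established inside the proof of Proposition~\ref{p:almost monotonicity}: there exists $C=C(n,a,\delta)>0$ with
\[
\frac{\d}{\d r}\big(I_{u_{x_0}}(r)\big)\geq -C\,r^{\theta-1}I_{u_{x_0}}(r)\geq -2AC\,r^{\theta-1}\quad\text{for a.e. }r\in(0,\varrho_{\ref{c:monotone additive}}].
\]
Consequently, setting $C_{\ref{c:monotone additive}}:=2C/\theta$ and taking any $\Lambda\geq A\,C_{\ref{c:monotone additive}}$,
\[
\frac{\d}{\d r}\big(I_{u_{x_0}}(r)+\Lambda\,r^{\theta}\big)\geq -2AC\,r^{\theta-1}+\Lambda\theta\,r^{\theta-1}\geq 0,
\]
which, after integration, yields the additive quasi\-/monotonicity \eqref{e:monotone additive}.

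The only non\-/trivial point is justifying the pointwise bound $I_{u_{x_0}}(r)\leq 2A$ uniformly on the whole interval; once that is secured, the rest is a one\-/line comparison of derivatives. Note that the hypothesis $I_{u_{x_0}}(\varrho_{\ref{c:monotone additive}})\leq A$ is indispensable precisely to linearise the multiplicative estimate of Proposition~\ref{p:almost monotonicity} into an additive error of order $r^{\theta}$.
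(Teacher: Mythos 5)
Your proof is correct and uses essentially the same route as the paper: invoke the multiplicative quasi-monotonicity of Proposition~\ref{p:almost monotonicity} to turn the hypothesis $I_{u_{x_0}}(\varrho_{\ref{c:monotone additive}})\leq A$ into a uniform pointwise bound on $I_{u_{x_0}}$ over the interval, feed that bound into \eqref{e:derivata monotonia}, and integrate. The paper keeps the factor $e^{C_{\ref{p:almost monotonicity}}}$ while you shrink $\varrho_{\ref{c:monotone additive}}$ so the factor is at most $2$; this is a cosmetic difference only.
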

\begin{proof}
Under the standing assumptions, the quasi-monotonicity of
$I_{u_{x_0}}$ and \eqref{e:derivata monotonia} yield that
\[
 \frac{\d}{\d r}\big(I_{u_{x_0}}(r)\big)\geq 
 -Ce^{C_{\ref{p:almost monotonicity}}}\,A\, r^{\theta-1},
\]
for $r$ sufficiently small.
Hence, we conclude \eqref{e:monotone additive} at once by integration. 
\end{proof}

\subsection{Lower bound on the frequency and compactness}
We first show that the frequency of a solution $u$ to 
\eqref{e:ob-pb local} at points in $\ZZ(u)$ is bounded from below 
by a universal constant.

\begin{lemma}\label{l:freq lower}
For every $\delta>0$ there exists $\varrho_{\ref{l:freq lower}}>0$ such that, 
for all $x_0 \in \ZZ(u)$ and $r\in(0,\varrho_{\ref{l:freq lower}}]$, 
\begin{equation}\label{e:freq lower}
I_{{u_{x_0}}}(r) \geq \frac 1{2C_{\ref{l:estimates}}}. 
\end{equation}
\end{lemma}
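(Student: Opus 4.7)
The plan is to combine the Poincar\'e-type inequality \eqref{e:HD} of Lemma \ref{l:estimates} with the nondegeneracy condition built into the definition of $\ZZ(u)$. From \eqref{e:HD},
\[
H_{u_{x_0}}(r)\leq C_{\ref{l:estimates}}\bigl(r\,D_{u_{x_0}}(r)+r^{n+a+2(k+1)}\bigr),
\]
so dividing by the strictly positive quantity $H_{u_{x_0}}(r)$ and using the lower bound $H_{u_{x_0}}(r)\geq \delta\,r^{n+a+2(k+1-\theta)}$ available for $x_0\in\ZZ(u)$, I would obtain
\[
\frac{r\,D_{u_{x_0}}(r)}{H_{u_{x_0}}(r)}\geq \frac{1}{C_{\ref{l:estimates}}}-\frac{r^{n+a+2(k+1)}}{H_{u_{x_0}}(r)}\geq \frac{1}{C_{\ref{l:estimates}}}-\frac{r^{2\theta}}{\delta}.
\]

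Next, I would transfer this lower bound for $r\,D_{u_{x_0}}/H_{u_{x_0}}$ to one for $I_{u_{x_0}}$ via the two-sided comparison \eqref{e:frequencies comparison} established in Proposition \ref{p:almost monotonicity}. For every $r\in(0,\varrho_{\ref{p:almost monotonicity}}]$ this gives
\[
I_{u_{x_0}}(r)\geq (1-C\,r^{\theta})\frac{r\,D_{u_{x_0}}(r)}{H_{u_{x_0}}(r)}\geq (1-C\,r^{\theta})\Bigl(\frac{1}{C_{\ref{l:estimates}}}-\frac{r^{2\theta}}{\delta}\Bigr).
\]
It then suffices to fix $\varrho_{\ref{l:freq lower}}\in(0,\varrho_{\ref{p:almost monotonicity}}]$ small enough, depending on $\delta$, $\theta$, $C_{\ref{l:estimates}}$ and the constant $C$ appearing in \eqref{e:frequencies comparison}, so that the combined error factors $(1-C\,r^{\theta})$ and $r^{2\theta}/\delta$ reduce the main term $1/C_{\ref{l:estimates}}$ by at most a factor of $\sfrac12$, which yields \eqref{e:freq lower}.

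I do not expect any substantial obstacle: the statement is essentially an algebraic consequence of Lemma \ref{l:estimates}, the defining inequality of $\ZZ(u)$ and the comparison \eqref{e:frequencies comparison} between $I_{u_{x_0}}$ and $r\,D_{u_{x_0}}/H_{u_{x_0}}$. The only point requiring mild care is to ensure that the chosen $\varrho_{\ref{l:freq lower}}$ sits below all smallness thresholds used in the preceding lemmas, so that both \eqref{e:frequencies comparison} and the positivity of $r\,D_{u_{x_0}}/H_{u_{x_0}}$ are guaranteed.
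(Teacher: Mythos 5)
Your argument is correct and coincides with the paper's own proof: both derive $\tfrac{1}{C_{\ref{l:estimates}}}\leq \tfrac{r\,D_{u_{x_0}}(r)}{H_{u_{x_0}}(r)}+\tfrac{r^{2\theta}}{\delta}$ from \eqref{e:HD} together with the defining lower bound on $H_{u_{x_0}}$ in $\ZZ(u)$, and then pass to $I_{u_{x_0}}$ via \eqref{e:frequencies comparison} after shrinking the radius. No gaps.
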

\begin{proof}
 In view of \eqref{e:HD} and since $x_0 \in \ZZ(u)$, we have for all $r$ sufficiently small,
 \begin{equation*}
\frac{1}{C_{\ref{l:estimates}}}\leq
\frac{rD_{u_{x_0}}(r)}{H_{u_{x_0}}(r)}+
\frac{r^{2\theta}}{\delta}.
 \end{equation*}
 Inequality \eqref{e:freq lower} is a straightforward consequence of estimate \eqref{e:frequencies comparison} and
the latter estimate provided that $\varrho_{\ref{l:freq lower}}$ is sufficiently small. 
\end{proof}

For the free boundary analysis developed in \cite{FoSp17} it is mandatory to consider the critical set 
of a solution. In the current framework, the natural subsitute for the critical set is given by
\[
\mathscr{N}_\varphi(u) := \Big\{
(x',0)\in B_1'\,:\,u(x',0)-\varphi(x')=|\nabla' \big(u(x',0)-\varphi(x')\big)|
= \lim_{y\downarrow 0^+}t^a\de_{n+1}u(x',y) = 0
\Big\}.
\]
Notice that $\Gamma_\varphi(u)\subseteq\mathscr{N}_\varphi(u)\subseteq\Lambda_\varphi(u)$
(the first inclusion is a consequence of \eqref{e:bd condition1}). 

We can then give the following compactness result. 
For $u:B_1\to\R$ solution of \eqref{e:ob-pb local} and $x_0\in B_1'$ we introduce the rescalings
\begin{equation}\label{e:rescaling0}
u_{x_0,r} (y) := 
\frac{r^{\frac{n+a}{2}}\,u_{x_0}(x_0+ry)}{H_{u_{x_0}}^{\sfrac12}(r)}
\qquad \forall \; r\in(0,1-|x_0|), \;\forall\; y \in B_{\frac{1-|x_0|}{r}}.
\end{equation}
Note that $u_{x_0,r}$ is a minimizer of  the functional
\begin{align}\label{e:funz rescaling2}
\int_{B_1}|\nabla v|^2|x_{n+1}|^a\d x
-2\int_{B_1}v L_a(\varphi_{x_0,r})\,\d x
\end{align}
with obstacle function
\begin{equation}\label{e:rescaling phi}
\varphi_{x_0,r} (y) := 
\frac{r^{\frac{n+a}{2}}\,\varphi_{x_0}(x_0+ry)}{H_{u_{x_0}}^{\sfrac12}(r)}\,,
\end{equation}
among all functions $v\in 
u_{x_0,r}+H^1_0(B_1,\dm)$ satisfying 
$v(x',0)\geq 0$ on $B_1'$.
\begin{corollary}\label{c:compactness}
Let $\delta>0$ be given. Let $(u_l)_{l\in\N}$ be a sequence of solutions to the fractional obstacle problem 
\eqref{e:ob-pb local} in $B_1$ with obstacle functions $\varphi_l$
equi-bounded in $C^{k+1}(B_1)$, and let
$x_l\in \mathscr{Z}_{\varphi_l,\theta,\delta}(u_l)$ 
be such that $\sup_{l} I_{(u_l)_{x_l}}(\varrho_l) <+\infty$, for some $\varrho_l\downarrow 0$.

Then, there exist a subsequence $l_j\uparrow\infty$ and a solution $v_\infty$ to the fractional obstacle problem 
\eqref{e:ob-pb local} in $B_1$ with null obstacle function, such that on setting 
$v_j:=(u_{l_j})_{x_{l_j},\varrho_{l_j}}$ we have
\begin{gather}
v_j\to v_\infty \quad \text{in $H^1(B_1, \dm)$,}\label{e:cpt1}\\
v_j \to v_\infty \quad \text{in $C^{0,\alpha}_{\loc}(B_1)$, 
$\forall\;\alpha <\min\{1,2s\}$} 
\label{e:cpt4}\\
\nabla' v_j \to \nabla' v_\infty \quad 
\text{in $C^{0,\alpha}_{\loc}(B_1)$, $\forall\;\alpha <s$}, 
\label{e:cpt2}\\
\sgn(x_{n+1})\,|x_{n+1}|^a\de_{x_{n+1}} v_j \to 
\sgn(x_{n+1})\,|x_{n+1}|^a\de_{x_{n+1}} v_\infty 
\text{ in $C^{0,\alpha}_{\loc}(B_1)$, $\forall\;\alpha <1-s$.}
\label{e:cpt3}
\end{gather}
\end{corollary}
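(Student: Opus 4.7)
My plan is to correct $v_j$ by subtracting an $a$-harmonic polynomial so that it coincides with the ``standard'' rescaled object $\tilde u_{j,0}$ associated with a solution $\tilde u_j$ of \eqref{e:ob-pb local} having an obstacle $\tilde\varphi_j$ whose $C^{k+1}$-norm vanishes as $j\to\infty$. Once this reduction is made, the regularity theorem~\ref{t:reg} applies uniformly in $j$ and yields all four convergences via Arzelà--Ascoli, weak $H^1$-compactness, and a variational comparison argument for the strong $H^1$-convergence.

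First I would introduce $\hat u_j(y):=c_{l_j}\,u_{l_j}(x_{l_j}+\varrho_{l_j}y)$ with $c_{l_j}:=\varrho_{l_j}^{(n+a)/2}H_{(u_{l_j})_{x_{l_j}}}^{-\sfrac12}(\varrho_{l_j})$, which by Remark~\ref{r:scaling} solves \eqref{e:ob-pb local} on balls of radius $(1-|x_{l_j}|)/\varrho_{l_j}\uparrow\infty$ with obstacle $\hat\varphi_j(y'):=c_{l_j}\varphi_{l_j}(x_{l_j}+\varrho_{l_j}y')$. Set $P_j:=\mathscr{E}[T_{k,0}[\hat\varphi_j]]$ and $\tilde u_j:=\hat u_j-P_j$; since $L_a(P_j)=0$ and $P_j(\cdot,0)=T_{k,0}[\hat\varphi_j]$, the function $\tilde u_j$ still solves \eqref{e:ob-pb local} with obstacle $\tilde\varphi_j:=\hat\varphi_j-T_{k,0}[\hat\varphi_j]$. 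Exploiting the homogeneity of $\mathscr{E}_{|\alpha|}$ and translation invariance \eqref{e:E traslato}, one checks $T_{k,0}[\hat\varphi_j](y')=c_{l_j}T_{k,x_{l_j}}[\varphi_{l_j}](x_{l_j}+\varrho_{l_j}y')$ and a similar identity for $P_j$; substituting in the definition \eqref{e:phitilde} of the correction confirms $\tilde u_{j,0}=v_j$. Because $D^\alpha\tilde\varphi_j(0)=0$ for $|\alpha|\leq k$, Taylor's formula combined with the hypothesis $x_{l_j}\in\mathscr{Z}_{\varphi_{l_j},\theta,\delta}$ (which gives $H_{(u_{l_j})_{x_{l_j}}}(\varrho_{l_j})\geq\delta\varrho_{l_j}^{n+a+2(k+1-\theta)}$) yields the crucial quantitative estimate
\[
\|\tilde\varphi_j\|_{C^{k+1}(B_1')}\leq C\,c_{l_j}\varrho_{l_j}^{k+1}\leq C\delta^{-\sfrac12}\varrho_{l_j}^{\theta}\xrightarrow[j\to\infty]{}0.
\]

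Next, using $H_{v_j}(1)=1$ and \eqref{e:L2 vs H}, one has $\|v_j\|_{L^2(B_1,\dm)}\leq 1$; applying Theorem~\ref{t:reg} to $\tilde u_j$ at $x_0=0$ (admissible for large $j$ since $\|\tilde\varphi_j\|_{C^{k+1}}\leq 1$), I would obtain uniform $C^{0,\alpha}$-bounds for $v_j=\tilde u_{j,0}$, $\nabla' v_j$ and $\sgn(y_{n+1})|y_{n+1}|^{a}\partial_{y_{n+1}}v_j$ on every $B_\rho\subset\subset B_1$. Because $a\in(-1,1)$, these pointwise bounds integrate to a uniform $L^2(B_1,\dm)$-bound on $\nabla v_j$. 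Arzelà--Ascoli, weak $H^1$-compactness and Rellich then produce a subsequence satisfying \eqref{e:cpt4}--\eqref{e:cpt3}, weakly converging in $H^1(B_1,\dm)$ and strongly in $L^2(B_1,\dm)$ to some $v_\infty\geq 0$ on $B_1'$ (nonnegativity is preserved by uniform convergence). Passing to the limit in \eqref{e:funz rescaling2} and using the rescaled bound $|L_a(\varphi_{x_{l_j},\varrho_{l_j}})(y)|\leq C\delta^{-\sfrac12}\varrho_{l_j}^{\theta}|y_{n+1}|^{a}|y'|^{k-1}\to 0$ (a scaling of \eqref{e:Laphi}) identifies $v_\infty$ as a solution of \eqref{e:ob-pb local} in $B_1$ with null obstacle.

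Finally, to upgrade the weak $H^1$-convergence to the strong one in \eqref{e:cpt1}, I would test the minimisation of \eqref{e:funz rescaling2} against the admissible competitor $v_j+\eta(v_\infty-v_j)$, $\eta\in C_c^\infty(B_1;[0,1])$ (admissibility holds since $v_j,v_\infty\geq 0$ on $B_1'$); expanding the resulting inequality and using strong $L^2$-convergence $v_j\to v_\infty$, the uniform $L^\infty$-bound from the previous step, and the vanishing of the drift $L_a(\varphi_{x_{l_j},\varrho_{l_j}})$, one deduces $\int\eta|\nabla(v_j-v_\infty)|^2|y_{n+1}|^{a}\,\d y\to 0$, and a diagonal argument letting $\eta\uparrow\mathbf{1}_{B_1}$ (with cutoff remainders controlled by the uniform Hölder bounds) yields strong convergence on all of $B_1$. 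The main technical delicacy I anticipate is handling the boundary remainders from the cutoff together with the drift terms so that both vanish in the limit; once these perturbations are uniformly controlled, the argument reduces to the standard variational comparison.
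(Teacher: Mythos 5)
Your reduction to $\tilde u_j=\hat u_j-\mathscr{E}[T_{k,0}[\hat\varphi_j]]$ with obstacle $\tilde\varphi_j$ satisfying $\|\tilde\varphi_j\|_{C^{k+1}(B_1')}\leq C\delta^{-\sfrac12}\varrho_{l_j}^{\theta}\to 0$ is correct and is an explicit form of what the paper encodes in \eqref{e:Laphij}; likewise your direct comparison argument with the competitor $(1-\eta)v_j+\eta v_\infty$ is a legitimate substitute for the paper's $\Gamma$-convergence step. The genuine gap is that you never invoke the hypothesis $\sup_l I_{(u_l)_{x_l}}(\varrho_l)<+\infty$, and the step that is supposed to replace it fails: the estimate of Theorem~\ref{t:reg} controls $u_{x_0}$ only on $B_{\frac{1-r}{2}}(x_0)$ with a constant $C_{\ref{t:reg}}(n,a,\alpha,r)$ that degenerates as $r\downarrow 0$, so the H\"older bounds you obtain are uniform only on compact subsets of $B_1$ and do \emph{not} ``integrate to a uniform $L^2(B_1,\dm)$-bound on $\nabla v_j$''. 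This breaks the argument in two places: you have no weak $H^1(B_1,\dm)$ compactness (only $H^1_{\loc}$), and, more seriously, in the final diagonal argument the cutoff remainders concentrated near $\partial B_1$ are \emph{not} controlled by ``the uniform H\"older bounds'', because no such uniform bounds exist near $\partial B_1$; without equi-integrability of the Dirichlet energies up to the boundary you cannot pass from $\int\eta|\nabla(v_j-v_\infty)|^2\,\dm\to 0$ to the strong convergence \eqref{e:cpt1} on all of $B_1$.

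The missing ingredient is precisely the frequency hypothesis. Since $x_{l_j}\in\mathscr{Z}_{\varphi_{l_j},\theta,\delta}(u_{l_j})$, inequality \eqref{e:frequencies comparison} applies and gives
\[
D_{v_j}(1)=\frac{\varrho_{l_j}\,D_{(u_{l_j})_{x_{l_j}}}(\varrho_{l_j})}{H_{(u_{l_j})_{x_{l_j}}}(\varrho_{l_j})}
\leq \big(1+C\varrho_{l_j}^{\theta}\big)\,I_{(u_{l_j})_{x_{l_j}}}(\varrho_{l_j})\leq C,
\]
which, combined with $\|v_j\|_{L^2(B_1,\dm)}\leq 1$ (from $H_{v_j}(1)=1$ and \eqref{e:L2 vs H}), is the uniform $H^1$ bound on which the whole compactness argument rests; this is exactly how the paper proceeds. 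Once you insert this bound, your scheme goes through: the interior Schauder estimates give \eqref{e:cpt4}--\eqref{e:cpt3}, the vanishing drift identifies $v_\infty$ as a solution with null obstacle, and your comparison argument (with the boundary region now handled by the uniform Dirichlet bound rather than by H\"older estimates) yields \eqref{e:cpt1}.
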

\begin{proof}
 By taking into account inequality \eqref{e:frequencies comparison} in 
 Proposition~\ref{p:almost monotonicity} we get for $l$ large
 \[
 \frac{\varrho_l D_{(u_l)_{x_l}}(\varrho_l)}{H_{(u_l)_{x_l}}(\varrho_l)}
 \leq (1+C\|\varphi_{l_j}\|_{C^{k+1}(B_1')}\varrho_l^\theta) I_{(u_l)_{x_l}}(\varrho_l).
 \]
In particular, we infer that $\sup_lD_{(u_l)_{x_l,\varrho_l}}(1)<\infty$. 
Thus, a subsequence $v_j:=(u_{l_j})_{x_{l_j},\varrho_{l_j}}$ converges weakly
$H^1(B_1,\dm)$ to some function $v_\infty$. Moreover, $v_j$ is a local minimizer of 
 \begin{align*}
F_j(v):=\int_{B_1}|\nabla v|^2|y_{n+1}|^a\d y
-2\int_{B_1}v L_a\big((\varphi_{l_j})_{x_{l_j},\varrho_{l_j}}\big)\,\d y
\end{align*}
among all functions $v\in v_j+H^1_0(B_1,\dm)$ satisfying $v(x',0)\geq 0$ on $B_1'$ 
(cf. \eqref{e:rescaling0}-\eqref{e:funz rescaling2}). 

By taking into account that $x_{l_j}\in 
\mathscr{Z}_{\varphi_{l_j},\theta, \delta}(u_{l_j})$, 
inequality \eqref{e:Laphi} implies that for all $y\in B_1\setminus B_1'$
\begin{equation}\label{e:Laphij}
|\big(L_a(\varphi_{l_j})_{x_{l_j},\varrho_{l_j}}\big)(y)|\leq 
\frac 1{\delta^{\sfrac12}}\|\varphi_{l_j}\|_{C^{k+1}(B_1')}
\varrho_{l_j}^{\theta}
|y_{n+1}|^a.
\end{equation}
Therefore, one can easily show that the sequence $(F_j)_j$ 
$\Gamma(L^2(B_1,\dm))$-converges to 
the functional $F_\infty:L^2(B_1,\dm)\to[0,+\infty]$ defined by
\[
F_\infty(v):=\int_{B_1}|\nabla v|^2|y_{n+1}|^a\d y
\]
if $v\in v_\infty+H^1_0(B_1,\dm)$ with $v(x',0)\geq 0$ on $B_1'$, and $+\infty$ otherwise on $L^2(B_1,\dm)$. 
In addition, being the $F_j$'s equicoercive in $L^2(B_1,\dm)$,
$F_j(v_j)\to F_\infty(v_\infty)$, so that by \eqref{e:Laphij} the convergence of $(v_j)_j$ to $v_\infty$ is actually strong 
$H^1(B_1,\dm)$.

Items \eqref{e:cpt4}-\eqref{e:cpt3} are then a straightforward consequence 
of Theorem~\ref{t:reg} and \eqref{e:Laphij} (cf. the arguments in 
\cite[Lemma~6.2]{CaSaSi08}).
 \end{proof}

A sharp lower bound on the frequency then follows.
\begin{corollary}\label{c:minima freq}
Let $\delta>0$. If $x_0 \in \ZZ(u)$, then
\begin{equation}\label{e:lower bound}
I_{u_{x_0}}(0^+) \geq 1+s\;. 
\end{equation}
\end{corollary}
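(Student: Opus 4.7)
The limit $\lambda := I_{u_{x_0}}(0^+)$ from \eqref{e:old new freq} exists by Proposition~\ref{p:almost monotonicity} and is strictly positive by Lemma~\ref{l:freq lower}. The plan is to perform a blowup of $u_{x_0}$ at $x_0$ and show that any limit is a nontrivial $\lambda$-homogeneous global solution of the \emph{zero} obstacle problem with $0$ in its free boundary, and then to invoke the known lower bound $\lambda \geq 1+s$ for such limiting profiles (cf.~\cite[Section~6]{CaSaSi08} and \cite[Theorems~1.1--1.3]{FoSp17}).

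Fix a sequence $r_j \downarrow 0$ and set $v_j := u_{x_0, r_j}$ as in \eqref{e:rescaling0}. A direct change of variables yields $H_{v_j}(1) \equiv 1$, while Remark~\ref{r:scaling} gives the scaling identity $I_{v_j}(\rho) = I_{u_{x_0}}(r_j \rho)$, so these frequencies are uniformly bounded on $[0,1]$. Corollary~\ref{c:compactness}, applied with $u_l = u$, $\varphi_l = \varphi$, $x_l = x_0$ and $\varrho_l = r_j$, produces a subsequence converging strongly in $H^1(B_1, \dm)$ and locally uniformly in $B_1$ to a limit $v_\infty$; by the decay \eqref{e:Laphij} of the rescaled drift $L_a(\varphi_{x_0, r_j})$, the function $v_\infty$ solves the fractional obstacle problem \eqref{e:ob-pb local} in $B_1$ with \emph{null} obstacle. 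Passing these convergences to the limit inside the expressions defining $G$, $H$ and $D$ yields $H_{v_\infty}(1) = 1$ (in particular $v_\infty \not\equiv 0$) and $I_{v_\infty}(\rho) = \lim_j I_{u_{x_0}}(r_j \rho) = \lambda$ for every $\rho \in (0,1)$.

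Constancy of $I_{v_\infty}$ then forces homogeneity. Indeed, for the zero obstacle problem the remainder $R_{v_\infty}$ appearing in Proposition~\ref{p:monotonia+lower} vanishes identically: the absolutely continuous part of $L_a(v_\infty)$ is zero away from the coincidence set, and the singular part is carried by the coincidence set, where both integrands $v_\infty$ and $\nabla v_\infty \cdot x$ vanish (in the sense of traces, since $v_\infty = 0$ and $\nabla' v_\infty = 0$ on the coincidence set, while $x_{n+1}=0$ there). Hence \eqref{e:monotonia freq} forces $H_{v_\infty} E_{v_\infty} \equiv G_{v_\infty}^2$, i.e.\ equality in the Cauchy--Schwarz inequality \eqref{e:Cauchy Schwarz}. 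This means that $\nabla v_\infty \cdot x/|x|$ is proportional to $v_\infty$ along rays, which integrated yields $\lambda$-homogeneity of $v_\infty$ on $B_1$; by scale invariance $v_\infty$ extends to a $\lambda$-homogeneous global solution on $\R^{n+1}$.

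It remains to verify $0 \in \Gamma(v_\infty)$: since $x_0 \in \Gamma_\varphi(u) = \partial_{B_1'}\Lambda_\varphi(u)$, for every $\rho>0$ the set $B_{r_j\rho}'(x_0)$ meets $\{(x',0):u(x',0)>\varphi(x')\}$, which rescales to $B_\rho' \cap \{v_j(\cdot,0)>0\} \neq \emptyset$; combined with $v_\infty(0)=\lim_j v_j(0)=0$ and the local uniform convergence, this places $0$ on the relative boundary of $\{v_\infty(\cdot,0)>0\}$. The classical lower bound for the Signorini problem with zero obstacle (cf.~\cite[Section~6]{CaSaSi08} and \cite[Theorems~1.1--1.3]{FoSp17}) then yields $\lambda \geq 1+s$. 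The delicate step I anticipate is the vanishing of $R_{v_\infty}$: one must carefully justify that the singular part of $L_a(v_\infty)$ contributes nothing to $R$, so that the frequency identity reduces to its clean Cauchy--Schwarz form needed to extract homogeneity from constancy of the frequency.
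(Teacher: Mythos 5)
Your proof follows the same blowup strategy the paper uses: rescale at vanishing radii, invoke Corollary~\ref{c:compactness} to extract a limit $v_\infty$ solving the zero-obstacle problem with $H_{v_\infty}(1)=1$, identify its frequency with $I_{u_{x_0}}(0^+)$ and deduce homogeneity from constancy of the frequency, then import the known lower bound from \cite{CaSaSi08, FoSp17}. The paper's own proof is just a compressed version of this. The step you flag as delicate --- vanishing of $R_{v_\infty}$ --- is in fact unproblematic: by the remark following the lemma containing \eqref{e:D}--\eqref{e:Dprime}, $R$ is computed against the absolutely continuous part $L_a(\varphi_{x_0})$ only, which is identically zero when $\varphi\equiv 0$; so the frequency identity reduces cleanly to the Cauchy--Schwarz form and homogeneity follows.

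The genuine gap is in your verification that $0 \in \Gamma(v_\infty)$. From $\{v_j(\cdot,0)>0\}\cap B_\rho'\neq\emptyset$ and local uniform convergence $v_j\to v_\infty$ you \emph{cannot} conclude $\{v_\infty(\cdot,0)>0\}\cap B_\rho'\neq\emptyset$: a sequence of nonnegative functions each positive somewhere can converge uniformly to $0$. This matters, since if $v_\infty(\cdot,0)\equiv 0$ globally the function $v_\infty = c\,|x_{n+1}|^{2s}$ with $c<0$ is a nontrivial, $2s$-homogeneous solution of the zero-obstacle problem (it is $a$-harmonic off the hyperplane, even, and $L_a v_\infty = 4sc\,\cH^n\res\{x_{n+1}=0\}\leq 0$), with empty free boundary and $2s<1+s$, so the cited lower bound would simply not apply. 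The correct repair --- and the one the paper itself spells out when proving its rigidity Proposition --- is to pass to the limit in the stronger convergences \eqref{e:cpt2}--\eqref{e:cpt3} of Corollary~\ref{c:compactness}: since $x_0\in\Gamma_\varphi(u)\subseteq \mathscr{N}_\varphi(u)$, the rescalings satisfy $v_j(0)=0$, $\nabla' v_j(0)=0$ and $\lim_{t\downarrow 0} t^a\de_{n+1}v_j(0,t)=0$, and these conditions are preserved in the limit, giving $0\in\mathscr{N}(v_\infty)$. The third condition in particular rules out $v_\infty = c\,|x_{n+1}|^{2s}$ (for which the weighted normal derivative at the origin equals $2sc\neq 0$), and the lower bound of \cite[Proposition~5.1]{CaSaSi08}, \cite[Corollary~2.12]{FoSp17} then applies.
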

\begin{proof}
 Note that $I_{u_{x_0}}(0^+)=\lim_{r\downarrow 0}I_{u_{x_0}}(r)=
 \lim_{r\downarrow 0}I_{u_{x_0,r}}(1)= I_{v_\infty}(1)$, for some $v_\infty$ 
 homogeneous solution to the fractional obstacle problem \eqref{e:ob-pb local} 
 with null obstacle function provided by 
Corollary~\ref{c:compactness}. 
 Thus, we conclude \eqref{e:lower bound} by \cite[Proposition~5.1]{CaSaSi08} 
 (see also \cite[Corollary~2.12]{FoSp17}).
\end{proof}

\section{Main estimates on the frequency}\label{s:frequency estimate}

In this section we prove the principal estimates on the frequency
that we are going to exploit in the sequel. We start with an elementary lemma. Recall that all obstacles functions $\varphi$ 
are assumed to satisfy the normalization condition
$\normphi_{C^{k+1}(B_1')}\leq 1$.
\begin{lemma}\label{l:lim uniforme}
Let $A,\,\delta>0$.  Then, there exist
$C_{\ref{l:lim uniforme}}$, $\varrho_{\ref{l:lim uniforme}}>0$
such that, if $u$ is a solution of to the fractional obstacle problem 
\eqref{e:ob-pb local} in $B_1$, with $\underline{0}\in \ZZ(u)$ and
$I_{u_{\underline{0}}}(2\varrho)\leq A$, $\varrho\leq \varrho_{\ref{l:lim uniforme}}$,
then for every $x \in B_{\sfrac\varrho2}'$
\begin{gather}
\label{e:H limitato}
\frac{1}{C_{\ref{l:lim uniforme}}} \leq 
\frac{H_{u_x}(\varrho)}{H_{u_{\underline{0}}}(\varrho)} 
\leq C_{\ref{l:lim uniforme}}\and 
\frac{1}{C_{\ref{l:lim uniforme}}}\leq 
\frac{D_{u_x}(\varrho)}{D_{u_{\underline{0}}}(\varrho)} 
\leq C_{\ref{l:lim uniforme}},\\
\Big\vert I_{u_{\underline{0}}}(\varrho) - I_{u_x}(\varrho) 
\Big\vert\leq C_{\ref{l:lim uniforme}}\,.\label{e:I limitato}
\end{gather}
\end{lemma}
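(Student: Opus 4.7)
The plan is to argue by contradiction, combining the compactness of rescalings from Corollary~\ref{c:compactness} with the quantitative closeness of $u_{x}$ and $u_{\underline 0}$ provided by Remark~\ref{r:rescal}. Suppose the conclusion fails for some choice of $A,\delta>0$: I extract sequences $\varrho_j\downarrow 0$, $x_j\in B'_{\sfrac{\varrho_j}2}$, and solutions $u^{(j)}$ of \eqref{e:ob-pb local} with obstacles $\varphi^{(j)}$ normalized by $\|\varphi^{(j)}\|_{C^{k+1}(B'_1)}\leq 1$, satisfying $\underline 0\in\mathscr{Z}_{\varphi^{(j)},\theta,\delta}(u^{(j)})$ and $I_{(u^{(j)})_{\underline 0}}(2\varrho_j)\leq A$, but for which at least one of \eqref{e:H limitato}--\eqref{e:I limitato} fails no matter the constant. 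I then introduce the rescalings $v_j:=(u^{(j)})_{\underline 0,\varrho_j}$ from \eqref{e:rescaling0}, the twin rescaling $\tilde v_j(y):=\varrho_j^{(n+a)/2}(u^{(j)})_{x_j}(\varrho_j y)/H_{(u^{(j)})_{\underline 0}}(\varrho_j)^{\sfrac12}$, and the scaled centers $y_j:=x_j/\varrho_j\in\overline{B'_{\sfrac12}}$.

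The key reduction is the uniform $C^1$-closeness of $\tilde v_j$ and $v_j$. Since $\underline 0\in\mathscr{Z}_{\varphi^{(j)},\theta,\delta}(u^{(j)})$ forces $H_{(u^{(j)})_{\underline 0}}(\varrho_j)^{\sfrac12}\geq\sqrt{\delta}\,\varrho_j^{(n+a)/2+k+1-\theta}$, combining this bound with \eqref{e:stima rescal} and \eqref{e:stima grad rescal} yields, for every $y\in B_{3/2}$,
\[
\|\tilde v_j-v_j\|_{L^\infty(B_{3/2})}+\|\nabla(\tilde v_j-v_j)\|_{L^\infty(B_{3/2})}\leq C\,\delta^{-\sfrac12}\,\varrho_j^{\theta}\xrightarrow[j\to\infty]{}0.
\]
The almost monotonicity (Proposition~\ref{p:almost monotonicity}) gives $I_{(u^{(j)})_{\underline 0}}(\varrho_j)\leq 2A$ for $j$ large, so Corollary~\ref{c:compactness} applies and produces (along a subsequence) a limit $v_\infty$ of $v_j$ in $H^1(B_1,\dm)$ and in $C^{0,\alpha}_{\loc}(B_1)$ for all admissible $\alpha$, where $v_\infty$ solves \eqref{e:ob-pb local} with zero obstacle. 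By the previous display, $\tilde v_j\to v_\infty$ in the same topologies. A further subsequence ensures $y_j\to y_\infty\in\overline{B'_{\sfrac12}}$. By the scale invariance (Remark~\ref{r:scaling}), the ratios of interest can be rewritten as integrals in the rescaled variables and passed to the limit, yielding
\[
\frac{H_{(u^{(j)})_{x_j}}(\varrho_j)}{H_{(u^{(j)})_{\underline 0}}(\varrho_j)}=-\int\dot\phi(|y-y_j|)\frac{\tilde v_j(y)^2}{|y-y_j|}|y_{n+1}|^a\,\d y\longrightarrow L:=-\int\dot\phi(|y-y_\infty|)\frac{v_\infty(y)^2}{|y-y_\infty|}|y_{n+1}|^a\,\d y,
\]
with analogous convergences for the $D$-ratio and for the $I$-difference. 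Finiteness of all three limits follows immediately from the uniform $L^\infty$ and $H^1$ bounds on $v_\infty$ (Theorem~\ref{t:reg} and \eqref{e:L2 vs H}).

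The main obstacle is the strict positivity of $L$ and of the corresponding $D$-limit, which rules out the contradictory assumption. For $L$: since $H_{v_\infty}(1)=\lim H_{v_j}(1)=1>0$, $v_\infty\not\equiv 0$; being analytic on $\{y_{n+1}>0\}$ by \cite{Hopf32}, strong unique continuation forbids $v_\infty$ from vanishing identically on the nonempty open set $(B_1(y_\infty)\setminus B_{\sfrac12}(y_\infty))\cap\{y_{n+1}>0\}$, so $L>0$. Analogously, $\nabla v_\infty$ cannot vanish throughout $B_1(y_\infty)$: otherwise $v_\infty$ would be constant on $B_1(y_\infty)\cap\{y_{n+1}>0\}$, hence constant on all of $\{y_{n+1}>0\}\cap B_1$ by unique continuation, forcing $v_\infty$ constant on $B_1$ by symmetry, and thus $I_{v_\infty}(1)=0$ in contradiction with $I_{v_\infty}(1)=\lim I_{v_j}(1)\geq 1+s$ (Corollary~\ref{c:minima freq} together with Proposition~\ref{p:almost monotonicity}). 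Assembling these quantitative bounds with \eqref{e:I limitato} contradicts the failure assumed for the chosen sequence, completing the proof.
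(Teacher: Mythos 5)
Your proposal is correct and follows essentially the same route as the paper: a compactness--contradiction argument in which the rescalings $v_j$ converge to a nonzero solution $v_\infty$ of the zero-obstacle problem, the obstacle corrections $(u^{(j)})_{x_j}-(u^{(j)})_{\underline 0}$ are killed by \eqref{e:stima rescal}--\eqref{e:stima grad rescal} together with the lower bound $H_{(u^{(j)})_{\underline 0}}(\varrho_j)\geq\delta\varrho_j^{n+a+2(k+1-\theta)}$, and the limits are shown positive via $H_{v_\infty}(1)=1$, analyticity, and $I_{v_\infty}(1)\geq 1+s$. The only bookkeeping point is that the relevant integrals live on $B_{\sfrac32}$ (the annuli are centered at $y_j\in \bar B'_{\sfrac12}$), so the compactness of Corollary~\ref{c:compactness} must be invoked at scale $2\varrho_j$ to get convergence on $B_2$ rather than $B_1$ --- which is precisely why the hypothesis is $I_{u_{\underline 0}}(2\varrho)\leq A$, and is how the paper proceeds.
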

\begin{remark}\label{r:freq ben def}
Note that as a byproduct of the first estimate in \eqref{e:H limitato} 
in Lemma~\ref{l:lim uniforme} frequencies at the scale $\varrho$ 
are well-defined at every point $x \in B_{\sfrac\varrho2}'$, 
recalling that $\underline{0}\in \ZZ(u)$.
\end{remark}
\begin{proof}
In order to prove \eqref{e:H limitato}, we argue by contradiction: 
we can assume that there exist $A,\,\delta>0$ and solutions $u_j$
to the fractional obstacle problem with obstacles $\varphi_j$, $\|\varphi_j\|_{C^{k+1}(B_1')}\leq1$, 
with $\underline{0}\in \mathscr{Z}_{\varphi_j,\theta,\delta}(u_j)$, such that
$I_{(u_j)_{\underline{0}}}(\varrho_j) \leq A$, for some $\varrho_j\downarrow0$, and there exist points 
$x_j\in B_{\sfrac{\varrho_{j}}4}'$ contradicting one of the sets of inequalities in \eqref{e:H limitato}.

In particular, by almost monotonicity of the frequency function (cf. Proposition~\ref{p:almost monotonicity}) 
and the lower bound on the frequency (cf. Corollary~\ref{c:minima freq}) we infer that 
$1+s\leq I_{(u_j)_{\underline{0}}}(t)\leq A\,e^{C_{\ref{p:almost monotonicity}}(2\varrho_j)^{\theta}}
\leq A\,e^{C_{\ref{p:almost monotonicity}}}=:A'$ 
for all $t\in(0,2\varrho_j]$. By Corollary \ref{c:compactness}, up to a subsequence, $v_j:=(u_j)_{\underline{0},\varrho_j}$ 
converges strongly in $H^1(B_2,\dm)$ to a function $v_\infty$ solution of the fractional 
obstacle problem in $B_2$ with zero obstacle function. We assume in addition that 
$\varrho_j^{-1}x_j\to x_\infty\in\bar{B}_{\sfrac12}'$. 

To prove the first set of inequalities in \eqref{e:H limitato}, we compute
\begin{align}\label{e:due maroni0}
\frac{H_{(u_j)_{x_j}}(\varrho_j)}{H_{(u_j)_{\underline{0}}}(\varrho_j)}&=\frac{2\varrho_j^{n+a}}{H_{(u_j)_{\underline{0}}}(\varrho_j)}
 \int_{B_1\setminus B_{\sfrac 12}}(u_j)_{x_j}^2(x_j+\varrho_j x)\frac{|x_{n+1}|^a}{|x|}\d x\notag\\
=& \frac{2\varrho^{n+a}_j}{H_{(u_j)_{\underline{0}}}(\varrho_j)}\int_{B_1\setminus B_{\sfrac 12}}
\Big[u_j(x_j+\varrho_j x)-(\varphi_j)_{x_j}(x_j+\varrho_j {x'})\Big]^2\frac{|x_{n+1}|^a}{|x|}\d x\notag\\
=& 2\int_{B_1\setminus B_{\sfrac 12}}\Big[
v_j(\varrho_j^{-1}x_j+x)+\frac{\varrho_j^{\sfrac{(n+a)}2}}{H_{(u_j)_{\underline{0}}}^{\sfrac12}(\varrho)}
\big((\varphi_j)_{\underline{0}}(x_j+\varrho_j{x'})-(\varphi_j)_{x_j}(x_j+\varrho_j{x'})\big)\Big]^2\frac{|x_{n+1}|^a}{|x|}\d x\,.
\end{align}
Moreover, by estimate \eqref{e:stima rescal} in Remark~\ref{r:rescal} and since $\varrho_j^{-1}x_j\to x_\infty$ 
we get for all $x'\in B'_1$
\begin{align}\label{e:differenza polinomi}
|(\varphi_j)_{\underline{0}}(x_j+\varrho_j {x'})- (\varphi_j)_{x_j}(x_j+\varrho_j {x'})|
\leq C \varrho_j^{k+1}\,.
\end{align}
Therefore, recalling that $\underline{0}\in \mathscr{Z}_{\varphi_j,\theta,\delta}(u_j)$,
from \eqref{e:differenza polinomi} we infer
\begin{align}\label{e:due maroni1}
 \frac{\varrho_j^{n+a}}{H_{(u_j)_{\underline{0}}}(\varrho_j)}
 \int_{B_1\setminus B_{\frac 12}}
 \big((\varphi_j)_{\underline{0}}(x_j+\varrho_j {x'})-(\varphi_j)_{x_j}(x_j+\varrho_j {x'})\big)^2\frac{|x_{n+1}|^a}{|x|}\d x
 \leq \frac{C 
 }{\delta}\varrho_j^{2\theta}.
\end{align}
Since $\varrho_j\downarrow0$, by contradiction 
$\lim_j\frac{H_{(u_j)_{x_j}}(\varrho_j)}{H_{(u_j)_{\underline{0}}}(\varrho_j)}\in\{0,\infty\}$.
Moreover, by \eqref{e:due maroni0} and \eqref{e:due maroni1}, by the strong $L^2(B_1,\dm)$ and local uniform convergence of 
$v_j\to v_\infty$ we conclude that, 
\begin{align*}
&2\int_{B_1(x_\infty)\setminus B_{\sfrac12}(x_\infty)}
 v_\infty^2(y)\frac{|y_{n+1}|^a}{|y-x_\infty|}\d y=
2\lim_j \int_{B_1(\varrho_j^{-1}x_j)\setminus B_{\sfrac12}(\varrho_j^{-1}x_j)}
v_j^2(y)\frac{|y_{n+1}|^a}{|y-\varrho_j^{-1}x_j|}\d y\notag\\
&=2\lim_j\int_{B_1\setminus B_{\sfrac 12}}v_j^2(\varrho_j^{-1}x_j+x)\frac{|x_{n+1}|^a}{|x|}\d x=
\lim_j \frac{H_{(u_j)_{x_j}}(\varrho_j)}{H_{(u_j)_{\underline{0}}}(\varrho_j)}\,.
\end{align*}
Being the left hand side finite, necessarily 
\[
2\int_{B_1(x_\infty)\setminus B_{\sfrac12}(x_\infty)}v_\infty^2(y)\frac{|y_{n+1}|^a}{|y-x_\infty|}\d y=
 \lim_j\frac{H_{(u_j)_{x_j}}(\varrho_j)}{H_{(u_j)_{\underline{0}}}(\varrho_j)}=0\,.
\]
Hence, $v_\infty\equiv 0$ on $B_1(x_\infty)\setminus B_{\sfrac12}(x_\infty)$, and thus
$v_\infty\equiv 0$ on the whole of $B_1$ by analiticity. 
A contradiction to $H_{v_\infty}(1)=1$ that follows from strong $L^2(B_1,\dm)$ convergence and 
the equality $H_{v_j}(1)=1$ for all $j$.

The second set of inequalities in \eqref{e:H limitato} is proven by the same argument.
Indeed, assuming that $\lim_j\frac{D_{(u_j)_{x_j}}(\varrho_j)}{D_{(u_j)_{\underline{0}}}(\varrho_j)}\in\{0,\infty\}$
we have
\begin{align*}
 & \frac{D_{(u_j)_{x_j}}(\varrho_j)}{D_{(u_j)_{\underline{0}}}(\varrho_j)}=\frac{\varrho_j^{n+a+1}}{D_{(u_j)_{\underline{0}}}(\varrho_j)}
 \int_{B_1}\phi(|x|)|\nabla\big(u_j(x_j+\varrho_j x)-(\varphi_j)_{x_j}(x_j+\varrho_j{x'})\big)|^2|x_{n+1}|^a\d x\,,
\end{align*}
and sinceby \eqref{e:stima grad rescal} in Remark~\ref{r:rescal} and by \eqref{e:Ddelta} 
\begin{align*}
 \frac{\varrho_j^{n+a+1}}{D_{(u_j)_{\underline{0}}}(\varrho_j)}&
 \int_{B_1}\phi(|x|)|\nabla\big((\varphi_j)_{\underline{0}}(x_j+\varrho_j{x'})-(\varphi_j)_{x_j}(x_j+\varrho_j{x'})\big)|^2
 |x_{n+1}|^a\d x\\ &
 \leq C
 \frac{\varrho_j^{n+a+1+2k}}{D_{(u_j)_{\underline{0}}}(\varrho_j)}\leq\frac {C
 }{\delta}\varrho_j^{2\theta}\,,
\end{align*}
we get (recall $\varrho_j\downarrow0$)
\begin{align*}
 \lim_j\frac{D_{(u_j)_{x_j}}(\varrho_j)}{D_{(u_j)_{\underline{0}}}(\varrho_j)}=
 \lim_j\frac{1}{4I_{(u_j)_{\underline{0}}}(\varrho_j)}
 \int_{B_1}\phi(|x|)|\nabla v_j(\varrho_j^{-1}x_j+x)|^2|x_{n+1}|^a\d x.
\end{align*}
By the strong convergence of $v_j$ to $v_\infty$ in $H^1(B_1,\dm)$, 
we infer that the left hand side is finite and then actually $0$, 
so that
\[ 
\int_{B_1}\phi(|x|)|\nabla v_\infty(x_\infty+x)|^2|x_{n+1}|^a\d x=0.
\]
Thus, by analiticity $v_\infty$ is constant on $B_1$, and we may conclude that 
\[ 
\int_{B_1}\phi(|x|)|\nabla v_\infty(x)|^2|x_{n+1}|^a\d x=0.
\]
The latter equality contradicts
\[ 
\int_{B_1}\phi(|x|)|\nabla v_\infty(x)|^2|x_{n+1}|^a\d x\in[1+s,2A']\,,
\]
that follows from strong $H^1(B_1,\dm)$ convergence and recalling that 
$H_{v_j}(1)=1$ and $1+s\leq I_{v_j}(1)\leq 2D_{v_j}(1)\leq A'$ for $j$ big enough (cf. \eqref{e:frequencies comparison}).

Finally, \eqref{e:I limitato} follows straightforwardly from \eqref{e:H limitato} for $\varrho_{\ref{l:lim uniforme}}$ 
sufficiently small by taking into account \eqref{e:frequencies comparison}:
\begin{align*}
\Big\vert I_{u_{\underline{0}}}(r) - I_{u_{x}}(r) \Big\vert
& = \Big\vert \frac{rG_{u_{\underline{0}}}(r)}{H_{u_{x}}(r)} 
\Big( \frac{H_{u_{x}}(r)}{H_{u_{\underline{0}}}(r)} - 
\frac{G_{u_{x}}(r)}{G_{u_{\underline{0}}}(r)}\Big)  
\Big\vert \stackrel{\eqref{e:H limitato}}{\leq} C. \qedhere
\end{align*}
\end{proof}

We introduce next the following notation for the radial variation of 
(modified) frequency at a point $x \in \ZZ(u)$ 
of a solution $u$ in $B_1$: given $0<r_0<r_1<1-|x|$, we set
\[
\Delta^{r_1}_{r_0}(x) := I_{u_x}(r_1)+\Lambda\, r_1^{\theta}
- \big(I_{u_x}(r_0)+\Lambda\,r_0^{\theta}\big).
\]
Note that,  $\Delta^{r_1}_{r_0}(x)\geq 0$ if $x\in\ZZ(u)$, if
$r_1$ is sufficiently small and if $\Lambda\geq A\,C_{\ref{c:monotone additive}}$ (cf. Corollary~\ref{c:monotone additive}).
We do not indicate the dependence of $\Delta^{r_1}_{r_0}$ on 
$\Lambda$ since such a parameter will be fixed appropriately in 
the next result.
\begin{lemma}\label{l:monotonia}
Let $A,\,\delta>0$. Then, there exist $C_{\ref{l:monotonia}}$ and 
$\varrho_{\ref{l:monotonia}}>0$ such that, if $x_0\in\ZZ(u)$, 
and $I_{u_{x_0}}(r_1) \leq A$, with $2r_1\leq \varrho_{\ref{l:monotonia}}$, 
then for every $r_0\in (\sfrac{r_1}{8}, r_1)$ we have
\begin{align}\label{e:monotonia con resto}
\int_{B_{\sfrac{r_1}2}(x_0)\setminus B_{\sfrac{r_0}2}(x_0)} 
\Big(\nabla u_{x_0}(z) \cdot (z - x_0) - 
I_{u_{x_0}}(\sfrac{r_0}2)\,u_{x_0}(z)\Big)^2 
\frac{|z_{n+1}|^a}{|z-x_0|}\,\d z
\leq C_{\ref{l:monotonia}} 
H_{u_{x_0}}(r_1)\,\Delta^{r_1}_{\sfrac{r_0}2}(x_0).
\end{align}
\end{lemma}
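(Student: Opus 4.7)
The plan is to expand the squared integrand algebraically, recognise the result as a completion of square involving the Almgren quantities introduced in Section~\ref{s:frequency}, and then reduce to a one--dimensional integral via Fubini. Since $-\dot\phi\equiv 2$ on $(\sfrac12,1)$, a direct computation gives for every $r\in(0,1-|x_0|)$ and every $\lambda\in\R$
\[
2\int_{B_r(x_0)\setminus B_{r/2}(x_0)} \bigl(\nabla u_{x_0}\cdot(z-x_0) - \lambda u_{x_0}\bigr)^2 \frac{|z_{n+1}|^a}{|z-x_0|}\d z = r^2 E_{u_{x_0}}(r) - 2\lambda r G_{u_{x_0}}(r) + \lambda^2 H_{u_{x_0}}(r)=: J(r,\lambda)\,,
\]
and completing the square one rewrites $J(r,\lambda) = H_{u_{x_0}}(r)(I_{u_{x_0}}(r)-\lambda)^2 + r^2(H_{u_{x_0}} E_{u_{x_0}} - G_{u_{x_0}}^2)/H_{u_{x_0}}(r)$. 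Moreover Fubini yields
\[
\text{LHS of \eqref{e:monotonia con resto}}\le \frac{1}{2\log 2}\int_{r_0/2}^{r_1}\frac{J(r, I_{u_{x_0}}(\sfrac{r_0}{2}))}{r}\d r\,,
\]
because for every $z$ with $|z-x_0|\in(r_0/2,r_1/2)$ the set $\{r\in(r_0/2,r_1): z\in B_r\setminus B_{r/2}\}$ equals $(|z-x_0|,2|z-x_0|)$ and thus contributes exactly $\log 2$ to the outer integration.

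It then suffices to bound each summand of $J(r,\lambda)/r$ by $CH_{u_{x_0}}(r_1)\Delta^{r_1}_{\sfrac{r_0}{2}}(x_0)$. For the Cauchy--Schwarz deficit $r(H_{u_{x_0}}E_{u_{x_0}} - G_{u_{x_0}}^2)/H_{u_{x_0}}$ one chooses $\Lambda\ge A\,C_{\ref{c:monotone additive}}$ large enough that $|R_{u_{x_0}}(r)|\le\Lambda\theta r^{\theta-1}$ in \eqref{e:Ru0}; Proposition~\ref{p:monotonia+lower} together with Corollary~\ref{c:monotone additive} then give $\tfrac{d}{dr}(I_{u_{x_0}}+\Lambda r^\theta)\ge\tfrac{2r(H_{u_{x_0}}E_{u_{x_0}} - G_{u_{x_0}}^2)}{H_{u_{x_0}}^2}$, which integrated on $(r_0/2,r_1)$ yields $\int_{r_0/2}^{r_1}\tfrac{2r(H_{u_{x_0}}E_{u_{x_0}} - G_{u_{x_0}}^2)}{H_{u_{x_0}}^2}\,\d r\le\Delta^{r_1}_{\sfrac{r_0}{2}}(x_0)$. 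Combining with Corollary~\ref{c:H}, applied with the lower bound on $I_{u_{x_0}}$ from Lemma~\ref{l:freq lower} and the upper bound $I_{u_{x_0}}\le 2A$ from Proposition~\ref{p:almost monotonicity}, one gets $H_{u_{x_0}}(r)\le C H_{u_{x_0}}(r_1)$ on $[r_0/2,r_1]$, whence the deficit contribution is bounded by $CH_{u_{x_0}}(r_1)\Delta^{r_1}_{\sfrac{r_0}{2}}(x_0)$. For the radial piece $H_{u_{x_0}}(I_{u_{x_0}}-\lambda)^2/r$, set $\widetilde I(r):=I_{u_{x_0}}(r)+\Lambda r^\theta$ and $\widetilde\lambda:=\widetilde I(\sfrac{r_0}{2})$; Corollary~\ref{c:monotone additive} yields $0\le \widetilde I(r)-\widetilde\lambda\le\Delta^{r_1}_{\sfrac{r_0}{2}}(x_0)$, and from the identity $I_{u_{x_0}}(r)-\lambda = (\widetilde I(r)-\widetilde\lambda) - \Lambda(r^\theta-(r_0/2)^\theta)$ one obtains $|I_{u_{x_0}}(r)-\lambda|\le \Delta^{r_1}_{\sfrac{r_0}{2}}(x_0) + \Lambda r_1^\theta$.

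The main obstacle is precisely the control of this radial term: since $I_{u_{x_0}}$ is not monotone in the non--zero obstacle case, the spurious term $\Lambda r_1^\theta$ cannot be absorbed directly into $\Delta^{r_1}_{\sfrac{r_0}{2}}(x_0)$, which can in principle be very small. The resolution is the lower bound $\Delta^{r_1}_{\sfrac{r_0}{2}}(x_0) \ge c_\theta\,\Lambda\,r_1^\theta$, valid upon further enlarging $\Lambda$ in terms of $A$ and $\theta$: Proposition~\ref{p:almost monotonicity} applied to $e^{C_{\ref{p:almost monotonicity}}r^\theta}I_{u_{x_0}}$ yields $I_{u_{x_0}}(\sfrac{r_0}{2}) - I_{u_{x_0}}(r_1)\le CA\,r_1^\theta$, while the elementary inequality $r_1^\theta-(r_0/2)^\theta\ge(1-2^{-\theta})r_1^\theta$ (valid for every $r_0\in(r_1/8,r_1)$) forces $\Delta^{r_1}_{\sfrac{r_0}{2}}(x_0)\ge [\Lambda(1-2^{-\theta}) - CA]r_1^\theta$. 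Choosing $\Lambda$ sufficiently large gives the claimed lower bound, so that $|I_{u_{x_0}}(r)-\lambda|\le C\Delta^{r_1}_{\sfrac{r_0}{2}}(x_0)$; using $(I_{u_{x_0}}(r)-\lambda)^2\le 2A|I_{u_{x_0}}(r)-\lambda|\le C\Delta^{r_1}_{\sfrac{r_0}{2}}(x_0)$ and integrating against $H_{u_{x_0}}(r)/r\le CH_{u_{x_0}}(r_1)/r$ finishes the proof.
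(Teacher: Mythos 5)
Your proof is correct and follows essentially the same route as the paper's: reduction to dyadic annuli via a Fubini-type inequality, the algebraic identity expressing the annular integral through $E$, $G$, $H$ (the paper's \eqref{e:I mon}), control of the Cauchy--Schwarz deficit by the derivative of the frequency, and control of the oscillation $|I_{u_{x_0}}(r)-I_{u_{x_0}}(\sfrac{r_0}2)|$ via the additive quasi-monotonicity of Corollary~\ref{c:monotone additive}. The one point where you go beyond the paper's write-up is the explicit lower bound $\Delta^{r_1}_{\sfrac{r_0}2}(x_0)\geq c_\theta\Lambda r_1^\theta$ (obtained from quasi-monotonicity and $r_1^\theta-(\sfrac{r_0}2)^\theta\geq(1-2^{-\theta})r_1^\theta$), which the paper leaves implicit when absorbing the spurious $\Lambda$-terms into $\Delta^{r_1}_{\sfrac{r_0}2}(x_0)$; this is a correct and welcome clarification, not a different argument.
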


\begin{proof} Without loss of generality we prove the result for $x_0=\underline{0}$.
We start off with the following computation:
\begin{align}\label{e:I mon}
2 \int_{B_t\setminus B_{\sfrac{t}{2}}} &
\Big(\nabla {u_{\underline{0}}}(z) \cdot z - I_{u_{\underline{0}}}(t)\,{u_{\underline{0}}}(z)\Big)^2 
\frac{|z_{n+1}|^a}{|z|} \d z\notag\\
&= \int - \dot{\phi}\big(\textstyle{\frac{|z|}{t}}\big)
\Big(\nabla {u_{\underline{0}}}(z) \cdot z - I_{u_{\underline{0}}}(t)\,{u_{\underline{0}}}(z)\Big)^2 
\frac{|z_{n+1}|^a}{|z|} \d z
\notag\allowdisplaybreaks\\
&= t^2 E_{u_{\underline{0}}}(t) - 2\, t\,I_{u_{\underline{0}}}(t)\,G_{u_{\underline{0}}}(t) 
+ I_{u_{\underline{0}}}^2(t) H_{u_{\underline{0}}}(t)\notag
\allowdisplaybreaks\\
&= \frac{t^2}{H_{u_{\underline{0}}}(t)}\,\big(E_{u_{\underline{0}}}(t) H_{u_{\underline{0}}}(t) 
- G_{u_{\underline{0}}}^2(t)\big)\stackrel{\eqref{e:monotonia freq}}{=} 
\frac{t}{2}\,H_{u_{\underline{0}}}(t)\Big(\frac{\d}{\d t}\big(
I_{u_{\underline{0}}}(t)\big)-R_{u_{\underline{0}}}(t)\Big)\,.
\end{align}
We now use the following integral estimate (whose elementary proof
is left to the readers)
\begin{align}\label{e:fubini}
\int_{B_{\rho_1}\setminus B_{\rho_0}} f(z) \d z& \leq 
\rho_0^{-1}\int_{\rho_0}^{2\rho_1} \int_{B_t\setminus B_{\sfrac{t}{2}}}f(z)\, \d z\,\d t
\qquad\text{$\forall$ $0<\rho_0\leq \rho_1$},
\end{align}
$f\geq0$ a measurable function, in order to deduce
\begin{align}\label{e:prima stima0}
&\int_{B_{\sfrac{r_1}2}\setminus B_{\sfrac{r_0}2}}
\Big(\nabla {u_{\underline{0}}}(z) \cdot z - 
I_{u_{\underline{0}}}(\sfrac{r_0}2)\,{u_{\underline{0}}}(z)\Big)^2 
\frac{|z_{n+1}|^a}{|z|}\,\d z\notag
\allowdisplaybreaks\\ & 
\stackrel{\eqref{e:fubini}}{\leq}
\frac2{r_0}\int_{\sfrac{r_0}2}^{r_1} 
\int_{B_t\setminus B_{\sfrac{t}{2}}}
\Big(\nabla {u_{\underline{0}}}(z) \cdot z - 
I_{u_{\underline{0}}}(\sfrac{r_0}2)\,{u_{x_0}}(z)\Big)^2 
\frac{|z_{n+1}|^a}{|z|}\,\d z\,\d t\notag\allowdisplaybreaks\\
&\leq \frac4{r_0}\int_{\sfrac{r_0}2}^{r_1} 
\int_{B_t\setminus B_{\sfrac{t}{2}}}
\Big[\big(\nabla {u_{\underline{0}}}(z) \cdot z - I_{u_{\underline{0}}}(t)\,{u_{\underline{0}}}(z)\big)^2+\big(I_{u_{\underline{0}}}(t) 
- I_{u_{\underline{0}}}(\sfrac{r_0}2)\big)^2 u_{\underline{0}}^2(z)\Big]\frac{|z_{n+1}|^a}{|z|}\,\d z\,\d t\notag
\allowdisplaybreaks\\
& \stackrel{\eqref{e:I mon},\,\eqref{e:monotone additive}}{\leq}
\frac2{r_0}\int_{\sfrac{r_0}2}^{r_1} \frac{t}{2}\, 
H_{u_{\underline{0}}}(t)\,\Big(\frac{\d}{\d t}\big(I_{u_{\underline{0}}}(t)\big)-R_{u_{\underline{0}}}(t)\Big)\, \d t\notag\\
&+ \frac{16}{r_0}\,\Big((I_{u_{\underline{0}}}(r_1)- I_{u_{\underline{0}}}(\sfrac{r_0}2))^2
+(A\,C_{\ref{c:monotone additive}})^2(r_1^{\theta}-(\sfrac{r_0}2)^{\theta})^2\Big)\,
\int_{\sfrac{r_0}2}^{r_1} H_{u_{\underline{0}}}(t)\,\d t\notag\\
& \leq
\frac{r_1}{r_0}H_{u_{\underline{0}}}(r_1)\int_{\sfrac{r_0}2}^{r_1} \,\Big(\frac{\d}{\d t}\big(I_{u_{\underline{0}}}(t)\big)-R_{u_{\underline{0}}}(t)\Big)\, \d t\notag\\
&+ 16\,\frac{r_1}{r_0}H_{u_{\underline{0}}}(r_1)\,\big((I_{u_{\underline{0}}}(r_1) - I_{u_{\underline{0}}}(\sfrac{r_0}2))^2
+(A\,C_{\ref{c:monotone additive}})^2(r_1^{\theta}
-(\sfrac{r_0}2)^{\theta})^2\big)\,.
\end{align}
In the last inequality we have used that, if $\varrho_{\ref{l:monotonia}}$ is sufficiently small, 
then $H_{u_{\underline{0}}}(t) \leq H_{u_{\underline{0}}}(r_1)$ for all 
$t \leq r_1$ by \eqref{e:H2}, and that $\frac{\d}{\d t}\big(I_{u_{\underline{0}}}(t)\big)-R_{u_{\underline{0}}}(t)\geq 0$ thanks to \eqref{e:I mon}. 
Moreover, estimate \eqref{e:Ru} in Proposition~\ref{p:almost monotonicity}, $I_{u_{\underline{0}}}(r_1) \leq A$, the quasi-monotonicity 
of the frequency function and the choice $2r_1\leq\varrho_{\ref{l:monotonia}}$ imply
\[
\int_{\sfrac{r_0}2}^{r_1} |R_{u_{\underline{0}}}(t)|\, \d t \leq 
A\,C_{\ref{p:almost monotonicity}}e^{C_{\ref{p:almost monotonicity}}r_1^{\theta}}\,
\big(r_1^{\theta}-(\sfrac{r_0}2)^{\theta}\big)\,.
\]
Hence, from \eqref{e:prima stima0} we conclude that 
\begin{align*}
\int_{B_{\sfrac{r_1}2}\setminus B_{\sfrac{r_0}2}} &
\Big(\nabla {u_{\underline{0}}}(z) \cdot z - I_{u_{\underline{0}}}(\sfrac{r_0}2)\,{u_{\underline{0}}}(z)\Big)^2 
\frac{|z_{n+1}|^a}{|z|}\,\d z\notag\\ 
& \leq C\,H_{u_{\underline{0}}}(r_1)\, \big(I_{u_{\underline{0}}}(r_1)
+\Lambda\,r_1^\theta-I_{u_{\underline{0}}}(\sfrac{r_0}2)
-\Lambda (\sfrac{r_0}2)^\theta\big),
\end{align*}
where we used that $\sfrac{r_1}{r_0}\leq 8$, and $C>0$.
\end{proof}

\subsection{Oscillation estimate of the frequency}

The following lemma shows how the spatial oscillation of the frequency in 
two nearby points at a given scale is in turn controlled by the radial variations
at comparable scales. 
\begin{proposition}\label{p:D_x frequency}
Let $A,\,\delta>0$. Then there exist
$C_{\ref{p:D_x frequency}}$, $\varrho_{\ref{p:D_x frequency}}>0$ 
such that if 
$\underline{0}\in\ZZ(u)$, 
$\tau\in(0,\sfrac{\varrho_{\ref{p:D_x frequency}}}{144})$ with
$I_{u_{\underline{0}}}(72\tau) \leq A$, then 
\begin{align}\label{e:D_x frequency}
\big\vert I_{u_{x_1}}\big(10\tau\big) - I_{u_{x_2}}\big(10\tau\big)\big\vert
 \leq & \,C_{\ref{p:D_x frequency}} \,
\left[\big(\Delta^{24\tau}_{3\tau}(x_1)\big)^{\sfrac{1}{2}} 
+ \big(\Delta^{24\tau}_{3\tau}(x_2)\big)^{\sfrac{1}{2}}\right]
+C_{\ref{p:D_x frequency}}
\tau^{\theta}\,,
\end{align}
for every $x_1,\,x_2\in B_\tau'\cap\ZZ(u)$. 
\end{proposition}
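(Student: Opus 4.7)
The plan is to compare $I_{u_{x_1}}(10\tau)$ and $I_{u_{x_2}}(10\tau)$ by exploiting the approximate homogeneity of $u_{x_i}$ on an annular region common to both base points, and then transferring information from one function to the other via the Taylor-type estimates of Remark~\ref{r:rescal}. Since $x_1,x_2\in B_\tau'$, both annuli $B_{12\tau}(x_i)\setminus B_{3\tau}(x_i)$ contain the common annulus $\mathcal{A}:=B_{11\tau}\setminus B_{4\tau}$, and both balls $B_{10\tau}(x_i)$ are sandwiched between $B_{9\tau}$ and $B_{11\tau}$. First, Lemma~\ref{l:lim uniforme} together with the almost-monotonicity of Proposition~\ref{p:almost monotonicity} will give $I_{u_{x_i}}(24\tau)\leq A'$ for a constant $A'=A'(A)$. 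Then applying Lemma~\ref{l:monotonia} at each $x_i$ with $r_1=24\tau$, $r_0=6\tau$ yields the weighted $L^2$ control
\[
\int_{B_{12\tau}(x_i)\setminus B_{3\tau}(x_i)} \bigl(\nabla u_{x_i}(z)\cdot (z-x_i) - I_{u_{x_i}}(3\tau)\,u_{x_i}(z)\bigr)^2\frac{|z_{n+1}|^a}{|z-x_i|}\,\d z \leq C\, H_{u_{x_i}}(24\tau)\,\Delta^{24\tau}_{3\tau}(x_i),
\]
for $i=1,2$, which on $\mathcal{A}$ expresses that $u_{x_i}$ is nearly $I_{u_{x_i}}(3\tau)$-homogeneous around $x_i$.

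\textbf{Transfer and conclusion.} By \eqref{e:stima rescal}--\eqref{e:stima grad rescal}, on $B_{11\tau}$ one has $|u_{x_1}-u_{x_2}|\leq C\tau^{k+1}$ and $|\nabla u_{x_1}-\nabla u_{x_2}|\leq C\tau^k$, while $|x_1-x_2|\leq 2\tau$. Decomposing
\[
\nabla u_{x_i}(z)\cdot(z-x_i) = \nabla u_{x_j}(z)\cdot(z-x_j)+\bigl(\nabla u_{x_i}-\nabla u_{x_j}\bigr)(z)\cdot(z-x_j)+\nabla u_{x_i}(z)\cdot(x_j-x_i)
\]
and chaining the two instances of the previous display, I obtain pointwise on $\mathcal{A}$ the identity
\[
\bigl(I_{u_{x_1}}(3\tau)-I_{u_{x_2}}(3\tau)\bigr)\,u_{x_1}(z) = \mathrm{Err}(z),
\]
whose $L^2\bigl(\mathcal{A},|z_{n+1}|^a\,\d z/|z|\bigr)$-norm is bounded by $C\bigl[(H_{u_{x_1}}(24\tau)\,\Delta^{24\tau}_{3\tau}(x_1))^{\sfrac12}+(H_{u_{x_2}}(24\tau)\,\Delta^{24\tau}_{3\tau}(x_2))^{\sfrac12}\bigr]$ plus an absolute Taylor contribution of order $\tau^{\frac{n+a+1}{2}+k+1}$. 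Dividing by $H_{u_{x_1}}(10\tau)^{\sfrac12}$ (which dyadically controls $\|u_{x_1}\|_{L^2(\mathcal{A},|z_{n+1}|^a\d z/|z|)}$) and using the $\ZZ(u)$-lower bound $H_{u_{x_i}}(r)\geq \delta r^{n+a+2(k+1-\theta)}$, together with the $H$-comparability of Lemma~\ref{l:lim uniforme}, converts the absolute Taylor term into the relative remainder $C\tau^\theta$ and gives
\[
|I_{u_{x_1}}(3\tau)-I_{u_{x_2}}(3\tau)| \leq C\bigl[(\Delta^{24\tau}_{3\tau}(x_1))^{\sfrac12}+(\Delta^{24\tau}_{3\tau}(x_2))^{\sfrac12}\bigr]+C\tau^\theta.
\]
Finally, Corollary~\ref{c:monotone additive} upgrades the scale from $3\tau$ to $10\tau$ at the price of adding at most $\Lambda\bigl((10\tau)^\theta-(3\tau)^\theta\bigr)\leq C\tau^\theta$, which is absorbed into the remainder.

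\textbf{Main obstacle.} The delicate step is the conversion of the Taylor-type absolute errors, of order $\tau^{k+1}$ for $u$ and $\tau^k$ for $\nabla u$, into \emph{relative} errors of order $\tau^\theta$ at the level of the frequency quotient $G/H$. This is precisely what the $\ZZ(u)$ non-degeneracy is designed to allow, and it requires the uniform $H$-comparability of Lemma~\ref{l:lim uniforme} to guarantee that the lower bound $H_{u_{x_2}}(10\tau)\gtrsim \delta\tau^{n+a+2(k+1-\theta)}$ transfers to any nearby base point. Keeping all constants dependent only on $(A,\delta)$ requires invoking this comparability jointly with the almost-monotonicity of Proposition~\ref{p:almost monotonicity} to control $I_{u_{x_i}}$ at every intermediate scale between $3\tau$ and $24\tau$.
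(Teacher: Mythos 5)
Your scaffolding (Lemma~\ref{l:monotonia} at both base points with $r_1=24\tau$, $r_0=6\tau$; the Taylor transfer of Remark~\ref{r:rescal}; the use of the $\ZZ(u)$ nondegeneracy to convert absolute Taylor errors of size $\tau^{k+1}$ into relative errors $\tau^\theta$) is the right one, but the central step fails. When you chain the two almost-homogeneity relations through your decomposition, the term $\nabla u_{x_i}(z)\cdot(x_j-x_i)=\partial_{x_j-x_i}u_{x_i}(z)$ cannot be put into $\mathrm{Err}$: it is a directional derivative of $u_{x_i}$ along a fixed horizontal vector of length $\leq 2\tau$, and its weighted $L^2$ norm on the annulus is bounded only by
$\tau^{\sfrac12}D_{u_{x_1}}^{\sfrac12}(22\tau)\approx \big(I_{u_{x_1}}(22\tau)\,H_{u_{x_1}}(22\tau)\big)^{\sfrac12}\leq C(A)\,H_{u_{x_1}}^{\sfrac12}(10\tau)$,
i.e.\ it is of the \emph{same order} as $\|u_{x_1}\|_{L^2(\mathcal{A},|z_{n+1}|^a\d z/|z|)}$, not small. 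What the chaining actually yields is
\[
\partial_{e}u_{x_1}(z)-\big(I_{u_{x_1}}(3\tau)-I_{u_{x_2}}(3\tau)\big)\,u_{x_1}(z)=\mathrm{Err}(z),\qquad e=x_2-x_1,
\]
with $\mathrm{Err}$ controlled by the deficits plus the Taylor term; dividing by $\|u_{x_1}\|$ then only reproduces the crude bound \eqref{e:I limitato}, $|I_{u_{x_1}}-I_{u_{x_2}}|\leq C$, and not \eqref{e:D_x frequency}. There is no purely linear identity of the form $(I_{u_{x_1}}-I_{u_{x_2}})\,u_{x_1}=\mathrm{Err}$ with a genuinely small right-hand side.

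The missing idea is how to extract the frequency difference from the relation above. The paper writes $I_{u_{x_1}}(10\tau)-I_{u_{x_2}}(10\tau)=\int_0^1\frac{\d}{\d t}\big(I_{u_{x_t}}(10\tau)\big)\d t$ along the segment $x_t=tx_1+(1-t)x_2$ and exploits the \emph{bilinear} structure of the derivative: up to a drift term it equals $-\frac{2}{H}\int\dot{\phi}\,(\delta_t-\lambda u_{x_t})\,(\nabla u_{x_t}\cdot y-I_{u_{x_t}}u_{x_t})\,\frac{\dm(y)}{|y|}$, where $\delta_t\approx\partial_e u_{x_t}$ and the free parameter $\lambda$ is chosen to be exactly the unknown difference $I_{u_{x_2}}(10\tau)-I_{u_{x_1}}(10\tau)$. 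With that choice the first factor is pointwise bounded by the sum of the two radial deficits (this is precisely your relation), the second factor is bounded by the deficits plus $|u_{x_1}|+|u_{x_2}|$, and Cauchy--Schwarz delivers $(\Delta^{24\tau}_{3\tau})^{\sfrac12}$ without ever needing $\partial_e u$ itself to be small. You would also then have to estimate the drift contribution coming from $\frac{\d}{\d t}\big(L_a(u_{x_t})\big)$ (the term $J^{(2)}_t$ in the paper), which is where part of the $C\tau^{\theta}$ remainder originates.
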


\begin{proof}
We start off noting that by Remark~\ref{r:freq ben def} and the 
choice $144\tau<\varrho_{\ref{p:D_x frequency}}$, if the constant 
$\varrho_{\ref{p:D_x frequency}}$ is suitably chosen, a simple scaling argument yields that
$I_{u_{x}}(10\tau)$ is well-defined for every $x\in B'_{\sfrac{77\tau}4}$.

To ease the readability of the proof we divide it in several substeps.
\smallskip

\noindent{\bf 1.}
With fixed $x_1,\,x_2\in B_\tau'\cap\ZZ(u)$, let $x_t:=tx_1+(1-t)x_2$, $t\in[0,1]$, and consider the map $t\mapsto I_{u_{x_t}}(10 \tau)$.
The differentiability of the functions $x\mapsto H_{u_x}(10 \tau)$ 
and $x\mapsto D_{u_x}(10 \tau)$ yields that
\[
I_{u_{x_1}}(10 \tau) - I_{u_{x_2}}(10 \tau)=\int_0^1
\frac{\d}{\d t}\big(I_{u_{x_t}}(10 \tau)\big)\,\d t.
\]
Set $e:=x_1-x_2$, then $e \cdot e_{n+1} = 0$; and set for all $y\in\R^{n+1}$
\[
\delta_t (y):=\frac{\d}{\d t}\big(u_{x_t}(x_t+y)\big).
\]
Recalling the very definition of $u_{x_t}$ in \eqref{e:vx0}, it turns out that 
\begin{equation}\label{e:deltat}
\delta_t (y)=\partial_e u(x_t+y)-\partial_e\varphi(x_t+y')+T_{k,x_t}[\partial_e\varphi](x_t+y')
-\mathscr{E}\big[{T}_{k,x_t}[\partial_e\varphi]\big](x_t+y)\,,
\end{equation}
because by linearity (the details of the elementary computations 
are left to the readers)
\begin{equation}\label{e:T derivata}
\frac{\d}{\d t}\big(T_{k,x_t}[\varphi](x_t+y')\big)= 
T_{k,x_t}[\partial_e\varphi](x_t+y')\,,
\end{equation}
and
\[
\frac{\d}{\d t}\big(\mathscr{E}\big[{T}_{k,x_t}[\varphi]\big](x_t+y)\big)=
\mathscr{E}\big[{T}_{k,x_t}[\partial_e\varphi]\big](x_t+y)\,.
\]
Morever, from the very definition of $u_{x_t}$ in \eqref{e:vx0} it is straightforward to prove that
\[
\partial_eu_{x_t}(x_t+y)=\partial_eu(x_t+y)-\partial_e\varphi(x_t+y')
+{T}_{k-1,x_t}[\partial_e\varphi](x_t+y')
-\mathscr{E}\big[{T}_{k-1,x_t}[\partial_e\varphi]\big](x_t+y)\,.
\]
Thus, from \eqref{e:deltat} and the latter equality, by direct calculation it follows that
\begin{equation*}
\delta_t (y)-\partial_e u_{x_t}(x_t+y)=\sum_{|\alpha|=k}D^\alpha(\partial_e\varphi(x_t))
\frac{(y')^\alpha}{\alpha!}-\mathscr{E}\Big(\sum_{|\alpha|=k}D^\alpha(\partial_e\varphi(x_t))
\frac{p_\alpha(\cdot-x_t)}{\alpha!}\Big)(y)\,, 
\end{equation*}
and thus we may conclude that
\begin{equation}\label{e:delta2bis}
|\delta_t (y)-\partial_e u_{x_t}(x_t+y)|\leq C|x_1-x_2||y|^{k}\,.
\end{equation}
Moreover, note also that
\begin{equation}\label{e:delta3}
\nabla\delta_t (y)=\frac{\d}{\d t}\big(\nabla u_{x_t}(x_t+y)\big)\,.
\end{equation}

\noindent{\bf 2.} Thanks to the previous formulas,  for all 
$\lambda\in\R$ we infer
\begin{align}\label{e:dH}
\frac{\d}{\d t}\big(H_{u_{x_t}}(10 \tau)\big) & =- 2 \, \int \dot{\phi}\big(\textstyle{\frac{|y|}{10 \tau}}\big)\,
u_{x_t}(x_t+y)\, \delta_t(y)\,\frac{|y_{n+1}|^a}{|y|} \,\d y\notag\\
& = - 2 \, \int \dot{\phi}\big(\textstyle{\frac{|y|}{10 \tau}}\big)\,
\big(\delta_t(y)-\lambda u_{x_t}(x_t+y)\big)\,u_{x_t}(x_t+y)\,\frac{|y_{n+1}|^a}{|y|} \,\d y
+ 2\lambda H_{u_{x_t}}(10 \tau).
\end{align}
In addition, integrating by parts gives
\begin{align}\label{e:dD}
\frac{\d}{\d t}& \big(D_{u_{x_t}}(10 \tau)\big)  \stackrel{\eqref{e:delta3}}{=}  
2\int \phi\big(\textstyle{\frac{|y|}{10 \tau}}\big)\,
\nabla\delta_t (y)\cdot \nabla u_{x_t}(x_t+y) \,|y_{n+1}|^a\,\d y\notag\\
  = &- \frac 1{5\tau}
 \int \dot{\phi}\big({\textstyle{\frac{|y|}{10 \tau}}}\big)\,
\delta_t(y)\,\nabla u_{x_t}(x_t+y) \cdot y\,
\frac{|y_{n+1}|^a}{|y|} \,\d y
- 2\int \phi\big(\textstyle{\frac{|y|}{10 \tau}}\big)\,
\delta_t(y)\, L_a(u_{x_t}(x_t+y)) \,\d y\notag\\
 = &- \frac 1{5\tau}
\int \dot{\phi}\big(\textstyle{\frac{|y|}{10 \tau}}\big)\,
\big(\delta_t(y)-\lambda u_{x_t}(x_t+y)\big) 
\nabla u_{x_t}(x_t+y) \cdot y\,\frac{|y_{n+1}|^a}{|y|} \,\d y
\notag\\
& +2\lambda G_{u_{x_t}}(10 \tau)  
- 2\int \phi\big(\textstyle{\frac{|y|}{10 \tau}}\big)\,
\delta_t(y)\, L_a(u_{x_t}(x_t+y)) \,\d y\,.
\end{align}
Then, by formula \eqref{e:D} together with \eqref{e:dH} and \eqref{e:dD}, we have that
\begin{align*}
& \frac{\d}{\d t}\big(I_{u_{x_t}}(10 \tau)\big)
 = I_{u_{x_t}}(10 \tau)
 \left(\frac{\frac{\d}{\d t}\big(G_{u_{x_t}}(10 \tau)\big)}{G_{u_{x_t}}(10 \tau)}-\frac{\frac{\d}{\d t}\big(H_{u_{x_t}}(10 \tau)\big)}{H_{u_{x_t}}(10 \tau)}\right)\notag\\
&= -\frac{2}{H_{u_{x_t}}(10 \tau)} \int \dot{\phi}\big(\textstyle{\frac{|y|}{10 \tau}}\big)\,
\big(\delta_t(y)- \lambda u_{x_t}(x_t+y)\big)
\big(\nabla u_{x_t}(x_t+y) \cdot y-I_{u_{x_t}}(10 \tau)\,u_{x_t}(x_t+y)\big)\,\frac{\d\mathfrak{m}(y)}{|y|}\notag\\
& +\frac{10 \tau}{H_{u_{x_t}}(10 \tau)}\int \phi\big(\textstyle{\frac{|y|}{10 \tau}}\big)\,
\Big(u_{x_t}(x_t+y)\, \frac{\d}{\d t}\big(L_a(u_{x_t}(x_t+y))\big)-\delta_t(y)\, L_a(u_{x_t}(x_t+y))\Big) \,\d y\\
& =:J_t^{(1)}+J_t^{(2)}.
\end{align*}
In what follows we estimate separately the two terms $J_t^{(i)}$.
\medskip

\noindent{\bf 3.} We start off with $J_t^{(1)}$. With this aim, first note that 
$I_{u_{\underline{0}}}(10\tau)\leq A\,e^{72^{\theta}
C_{\ref{p:almost monotonicity}}}$ 
by Proposition~\ref{p:almost monotonicity} since 
$144\tau< \varrho_{\ref{p:D_x frequency}}$, provided 
the latter is small enough.
In turn, as $x_t\in B_\tau'$, by \eqref{e:I limitato} in Lemma~\ref{l:lim uniforme} we infer that 
$I_{u_{x_t}}(10\tau)\leq C_{\ref{l:lim uniforme}}+A\,e^{72^{\theta}C_{\ref{p:almost monotonicity}}}$.

We estimate separately the factors of the integrand defining $J_t^{(1)}$
(setting  $x_t+y=z$). We start off with the first one as follows
\begin{align*}
 \big|\delta_t (z-x_t) & - \lambda\,u_{x_t}(z)\big|\leq  
 \big|\partial_e u_{x_t}(z) -  \lambda\,u_{x_t}(z)\big|
 +\big|\delta_t (z-x_t)- \partial_e u_{x_t}(z)\big|\notag\\ 
& \stackrel{\eqref{e:delta2bis}}{\leq}\big|\partial_e u_{x_t}(z) -  \lambda\,u_{x_t}(z)\big| +C|x_1-x_2||z-x_t|^k,
\end{align*}
with $C=C(n,k)>0$. Moreover,  by choosing 
$\lambda:={I_{u_{x_2}}(10 \tau)}-{I_{u_{x_1}}(10 \tau)}$, we infer
\begin{align*}
\big|\partial_e u_{x_t}(z) & -  \lambda\,u_{x_t}(z)\big|=
\big|\nabla u_{x_t}(z) \cdot e  -  \lambda\,u_{x_t}(z)\big|\notag\\
 \leq &
 \big|\nabla u_{x_t}(z)\cdot(z-x_1)- I_{u_{x_1}}(10 \tau)\,u_{x_t}(z)\big|+
 \big|\nabla u_{x_t}(z)\cdot(z-x_2)- I_{u_{x_2}}(10 \tau)\,u_{x_t}(z)\big|
 \notag\\ \leq &
 \big|\nabla u_{x_1}(z)\cdot(z-x_1)- I_{u_{x_1}}(10 \tau)\,u_{x_1}(z)\big|+
 \big|\nabla u_{x_2}(z)\cdot(z-x_2)- I_{u_{x_2}}(10 \tau)\,u_{x_2}(z)\big|
 \notag\\
 &+\big|\nabla(u_{x_t}(z)- u_{x_1}(z))\cdot(z-x_1)- I_{u_{x_1}}(10 \tau)\,(u_{x_t}(z)-u_{x_1}(z))\big|\notag\\
 &+\big|\nabla(u_{x_t}(z)- u_{x_2}(z))\cdot(z-x_2)- I_{u_{x_2}}(10 \tau)\,(u_{x_t}(z)-u_{x_2}(z))\big|\,.
\end{align*}
Using inequalities \eqref{e:stima rescal}-\eqref{e:stima grad rescal}
in Remark~\ref{r:rescal}, we estimate the last two addends as follows
\begin{align*}
\big|\nabla(u_{x_t}(z) & - u_{x_i}(z))\cdot(z-x_i)- I_{u_{x_i}}(10 \tau)\,(u_{x_t}(z)-u_{x_i}(z))\big|
\leq C\tau^{k+1}\,,
\end{align*}
for some constant $C>0$, for $i=1,2$.
In the last inequality we have used that $|z-x_i|\leq 12\tau$, being $z\in B_{10 \tau}(x_t)$.
Therefore, we have 
\begin{align}\label{e:polinomi}
 \big|\delta_{t}(z-x_t) & - \lambda\,u_{x_t}(z)\big|\leq 
 \big|\nabla u_{x_1}(z)\cdot(z-x_1)- I_{u_{x_1}}(10 \tau)\,u_{x_1}(z)\big|
 \notag\\ & 
 +\big|\nabla u_{x_2}(z)\cdot(z-x_2)- I_{u_{x_2}}(10 \tau)\,u_{x_2}(z)\big|+ C\,
 \tau^{k+1}=:\psi(z).
\end{align}

For the second factor, we note that for $i=1,2$
\begin{align*}
 |\nabla u_{x_t}(z)\cdot (z-x_t)-&I_{u_{x_t}}(10 \tau)u_{x_t}(z)|\notag\\ 
 \leq & |\nabla u_{x_i}(z)\cdot (z-x_t)-I_{u_{x_i}}(10 \tau)u_{x_i}(z)|
+|\nabla \big(u_{x_t}(z)-u_{x_i}(z)\big)\cdot (z-x_t)|\notag\\ 
& +|I_{u_{x_t}}(10 \tau)||u_{x_i}(z)-u_{x_t}(z)|
+|I_{u_{x_i}}(10 \tau)-I_{u_{x_t}}(10 \tau)||u_{x_i}(z)|\notag\\
 \leq & 
\tau^{k+1}+C\,|u_{x_i}(z)|.
\end{align*}
To estimate the last three addends we have used the very definition of 
$u_{x_t}$ in \eqref{e:vx0}, formula \eqref{e:I limitato} and inequalities
\eqref{e:stima rescal}-\eqref{e:stima grad rescal} in Remark~\ref{r:rescal}, 
taking into account that 
$|z-x_i|\leq 12\tau$ being $z\in B_{10 \tau}(x_t)$. 
Therefore, we get
\begin{align}\label{e:M}
&|\nabla u_{x_t}(z)\cdot (z-x_t)-I_{u_{x_t}}(10 \tau)u_{x_t}(z)|\leq 
\psi(z)+C\,(|u_{x_1}(z)|+|u_{x_2}(z)|)\,.
\end{align}
By collecting \eqref{e:polinomi} and \eqref{e:M}, using H\"older
inequality we conclude that there exists $C>0$ 
\begin{align}\label{e:derivata2.5}
J_t^{(1)} &\leq   \frac{C}{H_{u_{x_t}}(10 \tau)} 
\int-\dot{\phi}\big(\textstyle{\frac{|z-x_t|}{10 \tau}}\big)
\psi(z)\big(\psi(z)+|u_{x_1}(z)|+|u_{x_2}(z)|\big)
\frac{|z_{n+1}|^a}{|z-x_t|}\,\d z\notag\\
&\leq
\frac{C}{H_{u_{x_t}}(10 \tau)} \Big(
\int-\dot{\phi}\big(\textstyle{\frac{|z-x_t|}{10 \tau}}\big)
\psi^2(z)\frac{|z_{n+1}|^a}{|z-x_t|}\,\d z\Big)^{\sfrac12}
\cdot\notag\\& \hskip1cm
\cdot\Big(\int-\dot{\phi}\big(\textstyle{\frac{|z-x_t|}{10 \tau}}\big)
\big(\psi^2(z)+|u_{x_1}(z)|^2+|u_{x_2}(z)|^2\big)
\frac{|z_{n+1}|^a}{|z-x_t|}\,\d z\Big)^{\sfrac12}\,.
\end{align}
Clearly, we have that
\begin{align*}
&\int-\dot{\phi}\big(\textstyle{\frac{|z-x_t|}{10 \tau}}\big)
\psi^2(z)\frac{|z_{n+1}|^a}{|z-x_t|}\,\d z\notag\\
&\leq
C\int_{B_{10 \tau}(x_t)\setminus B_{5\tau}(x_t)}
|\nabla u_{x_1}(z)\cdot (z-x_1)-I_{u_{x_1}}(10 \tau)u_{x_1}(z)|^2
\frac{|z_{n+1}|^a}{|z-x_t|}\,\d z\notag\\&
+C
\int_{B_{10 \tau}(x_t)\setminus B_{5\tau}(x_t)}
|\nabla u_{x_2}(z)\cdot (z-x_2)-I_{u_{x_2}}(10 \tau)u_{x_2}(z)|^2
\frac{|z_{n+1}|^a}{|z-x_t|}\,\d z+C
\tau^{n+a+2(k+1)}\notag\\ &
\leq C\int_{B_{12 \tau}(x_1)\setminus B_{3\tau}(x_1)}
\big|\nabla u_{x_1}(z)\cdot(z-x_1)- I_{u_{x_1}}(10 \tau)\,u_{x_1}(z)\big|^2\,\frac{|z_{n+1}|^a}{|z-x_1|}\,\d z\notag\\
&+C\int_{B_{12 \tau}(x_2)\setminus B_{3\tau}(x_2)}
\big|\nabla u_{x_2}(z)\cdot(z-x_2)- I_{u_{x_2}}(10 \tau)\,u_{x_2}(z)\big|^2\,\frac{|z_{n+1}|^a}{|z-x_2|}\,\d z+C
\tau^{n+a+2(k+1)}\notag\\ &
\leq C H_{u_{x_1}}(24\tau)\Delta_{\sfrac{3\tau}2}^{24\tau}(x_1)
+CH_{u_{x_2}}(24\tau)\Delta_{\sfrac{3\tau}2}^{24\tau}(x_2)
+C
\tau^{n+a+2(k+1)}\,.
\end{align*}
In the second inequality, we have used that 
$B_{10 \tau}(x_t)\setminus B_{5\tau}(x_t)\subset B_{12 \tau}(x_i)\setminus B_{3\tau}(x_i)$ for $t\in[0,1]$, and that 
$|z-x_i|\leq 2|z-x_t|$ as $z\in B_{10 \tau}(x_t)\setminus B_{5\tau}(x_t)$,
 $i=1,2$.
Moreover, in the third inequality we have applied estimate \eqref{e:monotonia con resto} in Lemma~\ref{l:monotonia} 
to $x_1,\,x_2\in B_\tau'\cap\mathscr{Z}_{\varphi,\theta,\delta}(u)$, with $r_1=24\tau$ and $r_0=3\tau$.
Furthermore, thanks to Corollary~\ref{c:H}, we conclude
\begin{align}\label{e:stima psi}
&\int-\dot{\phi}\big(\textstyle{\frac{|z-x_t|}{10 \tau}}\big)
\psi^2(z)\frac{|z_{n+1}|^a}{|z-x_t|}\,\d z\notag\\
&\leq C\,H_{u_{x_1}}(10\tau)\Delta_{\sfrac{3\tau}2}^{24\tau}(x_1)
+C\,H_{u_{x_2}}(10\tau)\Delta_{\sfrac{3\tau}2}^{24\tau}(x_2)
+C
\tau^{n+a+2(k+1)}\,.
\end{align}
In addition, thanks to \eqref{e:L2 vs H} and $|z-x_t|\geq 5\tau$ we get
\begin{align}\label{e:stima psi2}
\int-\dot{\phi}\big({\textstyle{\frac{|z-x_t|}{10 \tau}}}\big)&
(|u_{x_1}(z)|^2+|u_{x_2}(z)|^2)
\frac{|z_{n+1}|^a}{|z-x_t|}\,\d z\notag\\
&\leq \frac2{5\tau}\big(\|u_{x_1}\|^2_{L^2(B_{10 \tau},\dm)}+\|u_{x_2}\|^2_{L^2(B_{10 \tau},\dm)}\big)
\leq 4H_{u_{x_1}}(10\tau)+4H_{u_{x_2}}(10\tau)\,.
\end{align}
By collecting \eqref{e:derivata2.5}-\eqref{e:stima psi2} we conclude that for some $C>0$
\begin{align*}
J_t^{(1)}\leq & \frac{C}{H_{u_{x_t}}(10 \tau)}
\Big(H_{u_{x_1}}(10\tau)\Delta_{\sfrac{3\tau}2}^{24\tau}(x_1)
+H_{u_{x_2}}(10\tau)\Delta_{\sfrac{3\tau}2}^{24\tau}(x_2)
+
\tau^{n+a+2(k+1)}\Big)
\notag\\ &
+\frac{C}{H_{u_{x_t}}(10 \tau)}\Big(H_{u_{x_1}}(10\tau)+H_{u_{x_2}}(10\tau)\Big)^{\sfrac12}\cdot
\notag\\&\hskip2cm\cdot
\Big(H_{u_{x_1}}(10\tau)\Delta_{\sfrac{3\tau}2}^{24\tau}(x_1)
+H_{u_{x_2}}(10\tau)\Delta_{\sfrac{3\tau}2}^{24\tau}(x_2)
+
\tau^{n+a+2(k+1)}\Big)^{\sfrac12}\notag\\ 
\leq & C\,\Big(\Delta_{\sfrac{3\tau}2}^{24\tau}(x_1)
+\big(\Delta_{\sfrac{3\tau}2}^{24\tau}(x_1)\big)^{\sfrac12}\Big)
+C\,\Big(\Delta_{\sfrac{3\tau}2}^{24\tau}(x_2)
+\big(\Delta_{\sfrac{3\tau}2}^{24\tau}(x_2)\big)^{\sfrac12}\Big)
+C\,
\frac{\tau^{2\theta}}{\delta}+C\,
\frac{\tau^{\theta}}{\delta^{\sfrac12}}\,,
 \end{align*}
where, in the last inequality, we have used Lemma~\ref{l:lim uniforme} 
and that 
$x_1,\,x_2\in B_\tau'\cap\mathscr{Z}_{\varphi,\theta,\delta}(u)$.

Finally, in view of the very definition of the spatial oscillation of the frequency and Corollary~\ref{c:monotone additive}, 
we deduce for some constant depending on $A$ that
\begin{align}\label{e:J1t}
J_t^{(1)}\leq  C\,\Big(
\big(\Delta_{\sfrac{3\tau}2}^{24\tau}(x_1)\big)^{\sfrac12}
+\big(\Delta_{\sfrac{3\tau}2}^{24\tau}(x_2)\big)^{\sfrac12}\Big)
+C\,
\tau^{\theta}\,. 
\end{align}
\smallskip

\noindent{\bf 4.} We estimate next $J_t^{(2)}$. We start off noting that for all $y\in B_1\setminus B_1'$ 
(cf. \eqref{e:diff pol funz bis})
\begin{align}\label{e:esti}
 \frac{\d}{\d t}\big(L_a(u_{x_t}(x_t+y))\big)
& =|y_{n+1}|^a\triangle\Big(\frac{\d}{\d t}\big(T_{k,x_t}[\varphi](x_t+y')-\varphi(x_t+y')\big)\Big)\notag\\
& \stackrel{\eqref{e:T derivata}}{=}|y_{n+1}|^a\triangle\Big(T_{k,x_t}[\partial_e\varphi](x_t+y')-\partial_e\varphi(x_t+y')\Big)
\notag\\&
\leq C\,
|x_1-x_2||y_{n+1}|^a|y'|^{k-2}\leq C\,
\tau|y_{n+1}|^a|y'|^{k-2}.
\end{align}
Then, arguing as in \eqref{e:differenza polinomi}, thanks to  
estimate \eqref{e:stima rescal} in Remark~\ref{r:rescal}, 
we get as $k\geq 2$ 
\begin{align*}
\Big|\int \phi\big(\textstyle{\frac{|y|}{10 \tau}}\big)\, &
\big(u_{x_t}(x_t+y)\, \frac{\d}{\d t}\big(L_a(u_{x_t}(x_t+y))\big)
\,\d y\Big|\\ 
& \stackrel{\eqref{e:esti}}{\leq}  C\,
\tau^{k-1}\int \phi\big(\textstyle{\frac{|y|}{10 \tau}}\big)\,
\big|u_{x_t}(x_t+y)\big||y_{n+1}|^a\,\d y\\ & 
= C\,
\tau^{k-1}\int_{B_{10 \tau}(x_t)} \phi\big(\textstyle{\frac{|z-x_t|}{10 \tau}}\big)\,
\big|u_{x_t}(z)\big||z_{n+1}|^a\,\d z\\ &
\stackrel{\eqref{e:stima rescal}}{\leq} C\,
\tau^{n+a+2k+1}+C\,
\tau^{k-1}\int_{B_{40 \tau}(x_1)} 
\phi\big(\textstyle{\frac{|z-x_1|}{40 \tau}}\big)\,
\big|u_{x_1}(z)\big||z_{n+1}|^a\,\d z\notag\\ 
& \stackrel{\eqref{e:norma L2 D}}{\leq}C\,
\tau^{n+a+2k+1}+C\,
\tau^{\frac{n+a+1}2+k}D_{u_{x_1}}^{\sfrac 12}(40 \tau)\,.
\end{align*}
In addition, \eqref{e:Lav} and \eqref{e:delta2bis} yield
\begin{align*}
\Big|\int \phi\big(\textstyle{\frac{|y|}{10 \tau}}\big)\,& \delta_t(y)\, L_a(u_{x_t}(x_t+y))\,\d y\Big|\\
& \leq C
 \tau^{n+a+2k+1}+
\Big|\int \phi\big(\textstyle{\frac{|y|}{10 \tau}}\big)\,\partial_e u_{x_t}(x_t+y)\, L_a(u_{x_t}(x_t+y))\,\d y\Big|\\
&\leq C
\tau^{n+a+2k+1} 
+
\tau^{k}\int \phi\big(\textstyle{\frac{|y|}{10 \tau}}\big)\,|\nabla u_{x_t}(x_t+y)|\,|y_{n+1}|^a\d y\\
& \leq C
\tau^{n+a+2k+1} +C
\tau^{\frac{n+a+1}2+k}D_{u_{x_t}}^{\sfrac 12}(10 \tau).
\end{align*}
Therefore, by applying repeatedly Lemma~\ref{l:lim uniforme},
by taking into account \eqref{e:frequencies comparison} and by choosing $\varrho_{\ref{p:D_x frequency}}$
sufficiently small, we infer that
\begin{align}\label{e:J2t}
 J_2^{(t)} \leq & \frac{C
 }{H_{u_{x_t}}(10 \tau)}\big(\tau^{n+a+2(k+1)}
 +\tau^{\frac{n+a+1}2+k+1}D_{u_{x_1}}^{\sfrac 12}(40\tau)
 +\tau^{\frac{n+a+1}2+k+1}D_{u_{x_t}}^{\sfrac 12}(10 \tau)\big)
 \notag\\ 
& \stackrel{\eqref{e:Hgrowth}}{\leq} C 
\Big(\frac{\tau^{2\theta}}\delta+
\frac{\tau^{\theta}}{\delta^{\sfrac12}}\Big(\frac{\tau D_{u_{x_1}}(40\tau))}{H_{u_{x_1}}(10 \tau)}\Big)^{\sfrac 12}
+\frac{\tau^{\theta}}{\delta^{\sfrac12}}I_{u_{x_t}}^{\sfrac12}(10 \tau)\Big)\notag\\
& \stackrel{\eqref{e:monotonia H}}{\leq} C
\Big(\tau^{2\theta}+
\tau^{\theta} I_{u_{x_1}}^{\sfrac 12}(40 \tau)
+\tau^{\theta} I_{u_{x_t}}^{\sfrac12}(10 \tau) \Big)
\leq C
\tau^{\theta}\,,
\end{align}
since $144\tau<\varrho_{\ref{p:D_x frequency}}$.  
\medskip

The conclusion in \eqref{e:D_x frequency} follows at once from estimates \eqref{e:J1t} and \eqref{e:J2t}.
\end{proof}

%
%
\section{Proof of the main result}\label{ss:mainresult}

\subsection{Mean-flatness}\label{s:mean-flatness}

Here we show a control of the Jones' $\beta$-number by the oscillation of the frequency. 
Given a Radon measure $\mu$ in $\R^{n+1}$, 
for every $x_0 \in \R^n$ and for every $r>0$, we set
\begin{equation}\label{e:beta}
\beta_\mu(x_0,r) := \inf_{\cL} \Big(
r^{-n-1} \int_{B_r(x_0)} \dist^2(y,\cL)\d\mu(y)\Big)^{\sfrac{1}{2}},
\end{equation}
where the infimum is taken among all affine $(n-1)$-dimensional planes $\cL \subset \R^{n+1}$.

If $x_0 \in \R^{n+1}$ and $r>0$ is such that $\mu(B_{r}(x_0)) >0$, set
$\bar x_{x_0,r}$ the barycenter of $\mu$ in $B_r(x_0)$, {\ie}
\[
\bar x_{x_0,r} := \frac{1}{\mu(B_{r}(x_0))} \int_{B_{r}(x_0)} x \, \d\mu(x)\,,
\]
and 
\[
{\bf B}_{x_0}(v, w) := \int_{B_{r}(x_0)} \big((x-\bar x_{x_0,r}) \cdot v\big)\;\big( 
(x-\bar x_{x_0,r}) \cdot w\big)\,\d\mu(x) 
\quad\forall \; v,\,w \in \R^{n+1}.
\]
Then
\begin{equation}\label{e:beta-charac}
\beta_{\mu}(x_0, r) = \Big(r^{-n-1}\big(\lambda_{n}+\lambda_{n+1}\big)\Big)^{\frac12},
\end{equation}
where $0\leq \lambda_{n+1} \leq \lambda_{n} \leq \cdots \leq \lambda_1$ are the 
eigenvalues of the positive semidefinite bilinear form 
${\bf B}_{x_0}$.

\begin{proposition}\label{p:mean-flatness vs freq}
Let $A,\,\delta>0$. Then there exist constants 
$C_{\ref{p:mean-flatness vs freq}}$, 
$\varrho_{\ref{p:mean-flatness vs freq}}>0$ with this property. 
Let $122r\leq\varrho_{\ref{p:mean-flatness vs freq}}$, 
$\underline{0} \in \ZZ(u)$ and $I_{u_{\underline{0}}}(66r) \leq A$. 
Let $\mu$ be a finite Borel measure with 
$\spt(\mu)\subseteq \ZZ(u)$. Then, for all points 
$p \in B_r'\cap\ZZ(u)$, we have
\begin{equation}\label{e:mean-flatness vs freq}
\beta_{\mu}^2 (p,r) \leq 
\frac{C_{\ref{p:mean-flatness vs freq}}}{r^{n-1}}\left(
\int_{B_{r}(p)}\Delta_{\sfrac{5}{2}r}^{24\,r}(x)\,\d\mu(x)+
r^{2\theta}\mu(B_{r}(p))\right).
\end{equation}
\end{proposition}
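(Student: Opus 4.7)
The plan is to bound the two smallest eigenvalues $\lambda_n$ and $\lambda_{n+1}$ of the bilinear form $\mathbf{B}_p$ and then appeal to \eqref{e:beta-charac}. Since $\spt(\mu)\subseteq \ZZ(u)\subseteq \R^n\times\{0\}$, every $x\in\spt\mu$ and the barycenter $\bar{x}_{p,r}$ lie in this hyperplane, so $e_{n+1}$ is an eigenvector of $\mathbf{B}_p$ with eigenvalue $0$ and $\lambda_{n+1}=0$. What remains is to control $\lambda_n$, the smallest eigenvalue restricted to the tangential subspace $\R^n\times\{0\}$, for which by the variational characterization it suffices to bound $\mathbf{B}_p(v,v)$ for any unit tangential $v$.

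The strategy is Naber--Valtorta in spirit. Setting $T_y(z):=\nabla u_y(z)\cdot(z-y)-I_{u_y}(\sfrac{5r}{2})\,u_y(z)$ for $y\in\{p\}\cup\spt\mu$, the key algebraic identity is
\[
\nabla u_x(z)\cdot(p-x)=T_x(z)-T_p(z)+R(x,z),
\]
where $R(x,z)$ collects contributions from (i) the differences $u_x-u_p$ and $\nabla u_x-\nabla u_p$, bounded pointwise by $Cr^{k+1}$ and $Cr^{k}$ respectively thanks to Remark~\ref{r:rescal}, and (ii) the frequency oscillation $|I_{u_x}(\sfrac{5r}{2})-I_{u_p}(\sfrac{5r}{2})|$, controlled by Proposition~\ref{p:D_x frequency} (used at an appropriate scale of order $r$) by $(\Delta^{24r}_{\sfrac{5r}{2}}(x))^{\sfrac12}+r^\theta$ up to a term which vanishes after recentering. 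Testing this identity against the weighted probe $\phi(|z-p|/r)\,\partial_v u_p(z)\,|z_{n+1}|^a$ supported on the annulus $B_{12r}(x)\setminus B_{\sfrac{5r}{2}}(x)$ and integrating in $z$, I would invoke Cauchy--Schwarz together with Lemma~\ref{l:monotonia} (which bounds $\|T_y\|_{L^2}^2\leq C\,H_{u_y}(24r)\,\Delta^{24r}_{\sfrac{5r}{2}}(y)$ for $y\in\{x,p\}$), the comparability $H_{u_x}(r)\asymp H_{u_p}(r)$ from Lemma~\ref{l:lim uniforme}, and the lower bound on $\int_{B_{12r}(p)}|\partial_v u_p|^2|z_{n+1}|^a\,dz$ furnished by Corollary~\ref{c:H} combined with the universal lower bound on $I_{u_p}$ from Corollary~\ref{c:minima freq}, to extract $((p-x)\cdot v)^2$ on the right while the common factor $H_{u_p}(r)$ cancels.

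Integrating over $d\mu(x)$ and replacing $p$ by $\bar{x}_{p,r}$ (which loses at most a harmless factor by a parallel-axis argument), one arrives at $\mathbf{B}_p(v,v)\leq C\,r^2\big(\int_{B_r(p)}\Delta^{24r}_{\sfrac{5r}{2}}(x)\,d\mu(x)+r^{2\theta}\mu(B_r(p))\big)$ for every tangential unit $v$, hence the same bound on $\lambda_n$; combining with $\lambda_{n+1}=0$ via \eqref{e:beta-charac} yields \eqref{e:mean-flatness vs freq}. The main obstacle is the simultaneous management of the two error contributions that did not appear in the zero-obstacle case of \cite{FoSp17}: the \emph{geometric} error from $I_{u_x}-I_{u_p}$ (producing the $\int\Delta\,d\mu$ term via Jensen's inequality applied to Proposition~\ref{p:D_x frequency}), and the \emph{obstacle-driven} error from $\varphi_x\neq\varphi_p$ and from the non-vanishing of $L_a(\varphi_x)$ (producing the $r^{2\theta}\mu(B_r(p))$ term via the growth encoded in $\spt\mu\subseteq\ZZ(u)$); both must be tracked carefully through the weighted integration-by-parts while respecting the boundary conditions \eqref{e:bd condition1}--\eqref{e:bd distribution}.
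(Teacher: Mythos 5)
Your overall architecture is the right one and coincides with the paper's: the Naber--Valtorta scheme, with $\lambda_{n+1}=0$ forced by $\spt\mu\subset\R^n\times\{0\}$, the insertion of the centered quantities $\nabla u_y(z)\cdot(z-y)-I_{u_y}\,u_y(z)$, and a three-way split of the error into (i) the ``at $x$'' term controlled by Lemma~\ref{l:monotonia}, (ii) the obstacle-difference term $u_x-u_p$ controlled pointwise by Remark~\ref{r:rescal} (giving $r^{k+1}$, hence eventually $r^{2\theta}\mu(B_r(p))$ after dividing by $H_{u_p}$ and using $p\in\ZZ(u)$), and (iii) the frequency-oscillation term controlled by Jensen plus Proposition~\ref{p:D_x frequency}. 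This matches the decomposition $J_1+J_2+J_3$ in the paper's proof.

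There is, however, a genuine gap in your lower-bound step. You propose to bound $\mathbf{B}_p(v,v)$ for a single tangential unit vector $v$ and to divide by ``the lower bound on $\int_{B_{12r}(p)}|\partial_v u_p|^2|z_{n+1}|^a\,\d z$ furnished by Corollary~\ref{c:H} combined with Corollary~\ref{c:minima freq}.'' No such lower bound exists for a fixed direction $v$: those corollaries control $D_{u_p}$, i.e.\ the \emph{full} gradient, and a single directional derivative $\partial_v u_p$ can be arbitrarily small on the relevant annulus --- precisely when $u_p$ is close to a homogeneous solution whose spine contains $v$, which is exactly the regime one must handle. Moreover, the direction you need to test is the eigendirection of $\lambda_n$, which you do not get to choose. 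The correct argument sums the Cauchy--Schwarz estimates over the whole diagonalizing basis $v_1,\dots,v_n$ and uses the ordering $\lambda_n\leq\lambda_i$ to produce $\lambda_n\,|\nabla' u_p(z)|^2$ on the left-hand side (this is \eqref{e:intermedio} in the paper); and even then the lower bound
\[
\int_{B_{11r}(p)\setminus B_{10r}(p)}|\nabla' u_p(z)|^2|z_{n+1}|^a\,\d z\;\geq\; c\,D_{u_p}(12r)
\]
is \emph{not} a consequence of Corollary~\ref{c:H}: it requires a separate compactness/contradiction argument (via Corollary~\ref{c:compactness} and the classification of degenerate limits, as in \cite[Proposition~4.2]{FoSp17}), since one must exclude both that the horizontal energy concentrates away from the annulus and that only the vertical derivative survives there. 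Only after this does Corollary~\ref{c:minima freq} enter, to convert $r\,H_{u_p}(12r)/D_{u_p}(12r)$ into $Cr^2$. With this step repaired, the rest of your plan goes through as in the paper.
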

\begin{proof}
The proof is a variant of the \cite[Proposition~4.2]{FoSp17}, which in turn follows closely the original 
arguments by Naber and Valtorta in \cite{NaVa1, NaVa2}, therefore we only highlight the main differences.

Without loss of generality assume that
$p \in  B_{r}'\cap\Gamma_\varphi(u)\cap\ZZ(u)$ is such that 
$\mu(B_r(p)) >0$ (otherwise, there is nothing to prove).
Let $\{v_1, \ldots, v_{n+1}\}$ be any diagonalizing basis for the
bilinear form ${\bf B}_p$ introduced in \S~\ref{s:mean-flatness},
with corresponding eigenvalues $0\leq\lambda_{n+1} \leq \lambda_{n} \leq \cdots \leq\lambda_1$.

Since $\spt(\mu) \subset \Gamma_\varphi(u)\subset\R^n\times\{0\}$, we may assume that $v_{n+1} = e_{n+1}$, 
$\lambda_{n+1} = 0$, so that $\beta_{\mu} (p,r) = (r^{-n-1}\lambda_{n})^{\sfrac12}$ by 
 \eqref{e:beta-charac}. Clearly, we may also assume that 
 $\lambda_n>0$.
 
From the very definitions of ${\bf B}_p$ and of its barycenter
we deduce 
\begin{align}\label{e:intermedio}
r^{n+1}\beta_{\mu}^2(p,r) &\int_{B_{11r}(p)
\setminus B_{10r}(p)}|\nabla' u_p(z)|^2|z_{n+1}|^a\d z
\notag\\ & 
\leq n\int_{B_{r}(p)} \int_{B_{12r}(x) \setminus 
B_{9r}(x)}
 \big((z-x) \cdot \nabla u_p(z) - \alpha\,u_p(z)\big)^2
 |z_{n+1}|^a\d z\;\d\mu(x),
\end{align}
where
\[
 \alpha:= \frac{1}{\mu(B_{r})} \int_{B_{r}(p)} 
 I_{u_x}(9r)\d\mu(x).
 \]
Next we estimate the two sides of \eqref{e:intermedio}.

For estimating the left hand side of \eqref{e:intermedio}, we can show by compactness 
that
\begin{align}
D_{u_p}(12r)
\leq C\int_{B_{11r}(p)\setminus B_{10r}(p)}
|\nabla' u_p(z)|^2|z_{
n+1}|^a\d z.
\label{e:stima dal basso}
\end{align}
Here we use the same contradiction argument in \cite[Proposition~4.2]{FoSp17} using
the compactness given by  Corollary~\ref{c:compactness}.

For what concerns the right hand side of \eqref{e:intermedio} 
we proceed as follows. By the triangular inequality we have that
\begin{align}\label{e:intermedio1} 
&\text{r.h.s. of \eqref{e:intermedio}}\notag\\
& \leq 4n\int_{B_{r}(p)}\int_{B_{12r}(x) \setminus 
B_{9r}(x)}
\big((z-x) \cdot \nabla u_x(z) - 
I_{u_x}(9r)\,u_x(z)\big)^2|z_{n+1}|^a\,\d z\;\d\mu(x)\notag\\
& +4n\int_{B_{r}(p)}\int_{B_{12r}(x) \setminus B_{9r}(x)}
\big((z-x) \cdot \nabla (u_x-u_p)(z) - \alpha\,
(u_x-u_p)(z)\big)^2|z_{n+1}|^a\,\d z\;\d\mu(x)\notag\\
& + 4n \int_{B_{r}(p)}
\int_{B_{12r}(x) \setminus B_{9r}(x)}
\Big(I_{u_x}(9r) - \alpha \Big)^2 \,u_x^2(z)|z_{n+1}|^a\,\d 
z\;\d\mu(x)=:J_1+J_2+J_3.
\end{align}
The addends $J_1$ and $J_3$ can be treated as in \cite[Proposition~4.2]{FoSp17}. Indeed, for $J_1$ 
we use Lemma~\ref{l:lim uniforme} and Lemma~\ref{l:monotonia} for a suitable choice of the constants to get 
\begin{align}\label{e:stima dall'alto1}
J_1&\leq C\,r \,
\int_{B_r(p)}H_{u_x}\big(24r\big)
\Delta^{24r}_{9r}(x)\,\d\mu(x)
\leq C\,r \,H_{u_p}(12r)\int_{B_r(p)}\Delta^{24r}_{9r}(x)\,\d\mu(x)\,.
\end{align}
For $J_3$ we use Jensen's inequality, 
Proposition~\ref{p:D_x frequency}, Fubini's Theorem,
inequality \eqref{e:L2 vs H} and \eqref{e:H limitato} in
Lemma~\ref{l:lim uniforme} to get
\begin{align}
J_3 \leq C\,r\,H_{u_p}(12r)\Big(
\int_{B_{r}(p)}\Delta^{22r}_{\sfrac{5}{2}r}(x)\, 
\d \mu(x)+r^{2\theta}\mu\big(B_{r}(p)\big)\Big).
\label{e:stima dall'alto2}
\end{align}
Note that the extra term with respect to \cite[Proposition~4.2]{FoSp17}
arises as a consequence of the additional error term in Proposition~\ref{p:D_x frequency}.

To estimate $J_2$ in \eqref{e:intermedio1} we first note that
$\nabla\big(T_{k,x}[\varphi]\big)=T_{k-1,x}[\nabla\varphi]$.
Then, we use estimates \eqref{e:stima rescal} and \eqref{e:stima grad rescal} in Remark~\ref{r:rescal} 
to deduce that for all $x\in B_r(p)$ and $z\in B_{12r}(x) \setminus B_{9r}(x)$ we have
\begin{align*}
&\big((z-x) \cdot \nabla (u_x-u_p)(z) - \alpha\,
(u_x-u_p)(z)\big)^2\notag\\
&\leq C\,
\Big(r^2|\nabla\big(T_{k,x}[\varphi](z)-T_{k,p}[\varphi](z)\big)|^2
+\alpha^2|T_{k,x}[\varphi](z)-T_{k,p}[\varphi]|^2(z)\Big)
  \leq C\,
  r^{2(k+1)}\,.
\end{align*}
Therefore, integrating the last estimate we conclude that
\begin{align}\label{e:stima dall'alto3}
J_2 \leq  C\,
r^{n+a+2k+3}\mu(B_{r}(p))\,.
\end{align}
We can now collect the estimates \eqref{e:stima dal basso}--\eqref{e:stima dall'alto3} and 
use Corollary~\ref{c:monotone additive} to get
\begin{align*}
r^{n+1}&\beta_{\mu}^2(p,r) D_{u_p}(12r)\\
&\leq C\,r \,H_{u_p} (12r)
\int_{B_{r}(p)}\big(\Delta^{24r}_{9r}(x)+\Delta^{22r}_{\sfrac52r}(x)\big)\, \d \mu(x)
\\&+ C\,r^{1+2\theta}\mu(B_{r}(p)) H_{u_p}(12r) + C\,r^{n+a+2k+3}\mu(B_{r}(p))\\
&\leq
C\,r \,H_{u_p}(12r)\,
\Big(\int_{B_{r}(p)}\Delta^{24r}_{\sfrac{5}{2}r}(x)\, \d \mu(x)
+\big(r^{2\theta}+\frac{r^{n+a+2(k+1)}}{H_{u_p}(12r)}\big)\mu(B_{r}(p))\Big).
\end{align*}
Finally, by assumption $p\in\ZZ$, then $e^{C_{\ref{p:almost monotonicity}}\normphi (12r)^\theta}I_{u_p}(12r) 
\geq 1+s$ (cf. Proposition~\ref{p:almost monotonicity}, Corollary~\ref{c:minima freq} and the choice 
$122r\leq 1$), so that the upper inequality in \eqref{e:frequencies comparison} yields 
\eqref{e:mean-flatness vs freq}.
\end{proof}

\subsection{Rigidity of homogeneous solutions}\label{s:rigidity}
In this section we extend the results on the rigidity of almost
homogeneous solutions established in \cite{FoSp17}.

We denote by $\cH_\lambda$ the space of all non-zero
$\lambda$-homogeneous solutions to the thin obstacle problem 
\eqref{e:ob-pb local} with zero obstacle,
\[
\cH_{\lambda} := \Big\{ u\in H^1_\loc(\R^{n+1},\dm)\setminus\{0\}
:\, u(x) = |x|^\lambda\,u\big(\sfrac{x}{|x|}\big),\text{ 
$u\vert_{B_1}$ solves \eqref{e:ob-pb local} with $\ph\equiv 0$}\Big\},
\]
and set $\cH:= \bigcup_{\lambda\geq 1+s}\cH_{\lambda}$.
The spine $S(u)$ of $u \in \cH$
is the maximal subspace of invariance of $u$,
\[
S(u) := \Big\{ y\in\R^n\times \{0\}\;:\; u(x+y) = u(x) \quad \forall\; x\in 
\R^{n+1}\Big\}.
\]
As observed in \cite{FoSp17}, the maximal dimension of the spine
of a function  in $\cH$ is at most $n-1$ and we set 
$u\in\cH^\top$ if $u \in \cH$ and $\dim S(u) = n-1$,
and $\cH^\low:=\cH\setminus \cH^\top$.
All functions in $\cH^\top$ are classified in \cite[Lemma~5.3]{FoSp17}.
Note also that by Caffarelli, Salsa and Silvestre~\cite{CaSaSi08}
\begin{equation}\label{e:H1+s}
 \cH_{1+s}\subseteq\cH^{top}.
\end{equation}

We next introduce the notion of almost homogeneous solutions.
Given $\delta>0$ and $x_0\in \ZZ$ we set
\[
J_{u_{x_0}} (t) := e^{C_{\ref{p:almost monotonicity}}
t^\theta} I_{u_{x_0}}(t)
\quad \forall\; t\in (0, \varrho_{\ref{p:almost monotonicity}}]\,.
\]
\begin{definition}\label{d:almost hom}
Let $\eta>0$ and let $u:B_1\to\R$ be a solution to thin obstacle problem \eqref{e:ob-pb local} with obstacle $\ph$ (as usual 
$\|\ph\|_{C^{k+1}(B_1')}\leq1$).
Assume that $0\in \ZZ$ and $\varrho\leq \varrho_{\ref{p:almost monotonicity}}$, $u$ is called \textit{$\eta$-almost 
homogeneous in $B_\varrho$} if
\[
J_{u_{\underline{0}}}(\sfrac\varrho2) - J_{u_{\underline{0}}}(\sfrac\varrho4)\leq \eta.
\]
\end{definition}

The following lemma justifies this terminology
and it is the analog of \cite[Lemma~5.5]{FoSp17}.

\begin{lemma}\label{l:almost hom}
Let $\eps,\,A>0$. There exists 
$\eta_{\ref{l:almost hom}}>0$ with the following property: 
for every $\delta>0$ there exists $\varrho_{\ref{l:almost hom}}$ such that, if $u$ is a $\eta_{\ref{l:almost hom}}$-almost homogeneous solution of \eqref{e:ob-pb local} in $B_\varrho$  with $\varrho\leq \varrho_{\ref{l:almost hom}}$ and obstacle $\ph$,  $\underline{0}\in\ZZ(u)$ and 
$I_{u_{\underline{0}}}(\varrho_{\ref{l:almost hom}})\leq A$, then 
\begin{align}\label{e:almost hom}
\big\| u_{\underline{0},\varrho} - w\big\|_{H^1(B_{\sfrac14}, \dm)}
\leq \eps,
\end{align}
for some homogeneous solution $w\in \cH$.
\end{lemma}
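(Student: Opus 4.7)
The plan is a standard contradiction-and-compactness argument, based on the ingredients already developed in Section~\ref{s:frequency}. Fix $\varepsilon, A>0$ and suppose, for contradiction, that no such $\eta_{\ref{l:almost hom}}$ exists. Then one can find $\delta>0$ together with sequences $\eta_j \downarrow 0$ and $\varrho_j \downarrow 0$, and solutions $u_j$ of \eqref{e:ob-pb local} with obstacles $\varphi_j$, $\|\varphi_j\|_{C^{k+1}(B_1')}\leq 1$, such that $\underline{0}\in\mathscr{Z}_{\varphi_j,\theta,\delta}(u_j)$, $I_{(u_j)_{\underline{0}}}(\varrho_j)\leq A$, the function $u_j$ is $\eta_j$-almost homogeneous in $B_{\varrho_j}$, and yet $v_j:=(u_j)_{\underline{0},\varrho_j}$ satisfies $\|v_j-w\|_{H^1(B_{\sfrac14},\dm)}>\varepsilon$ for every $w\in\cH$.

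By Corollary~\ref{c:compactness}, along a subsequence $v_j$ converges strongly in $H^1(B_1,\dm)$ to a function $v_\infty$ that solves \eqref{e:ob-pb local} in $B_1$ with zero obstacle. The rescaling normalization yields $H_{v_j}(1)=1$, hence $H_{v_\infty}(1)=1$ by $L^2(\dm)$ convergence on the annular support of $-\dot\phi$; in particular $v_\infty\not\equiv 0$, and the analyticity of $v_\infty$ in $\{x_{n+1}\neq 0\}$ prevents it from vanishing on any open annulus, so $H_{v_\infty}(r)>0$ for every $r\in(0,1]$ and $I_{v_j}(r)\to I_{v_\infty}(r)$ on the same range. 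Using the scaling identity $I_{v_j}(\rho)=I_{(u_j)_{\underline{0}}}(\varrho_j\rho)$ from Remark~\ref{r:scaling}, the $\eta_j$-almost homogeneity hypothesis rewrites as
\[
e^{C_{\ref{p:almost monotonicity}}(\varrho_j/2)^{\theta}}\,I_{v_j}(\sfrac12)-
e^{C_{\ref{p:almost monotonicity}}(\varrho_j/4)^{\theta}}\,I_{v_j}(\sfrac14)\leq\eta_j.
\]
Since $\eta_j,\varrho_j\downarrow 0$ and $I_{v_j}$ is equi-bounded (by Proposition~\ref{p:almost monotonicity} and $I_{(u_j)_{\underline{0}}}(\varrho_j)\leq A$), passing to the limit gives $I_{v_\infty}(\sfrac12)\leq I_{v_\infty}(\sfrac14)$.

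Because $v_\infty$ solves the zero obstacle problem, the remainder $R_{v_\infty}$ in \eqref{e:monotonia freq} vanishes identically, so by Proposition~\ref{p:monotonia+lower} and the Cauchy--Schwarz inequality \eqref{e:Cauchy Schwarz}, $I_{v_\infty}$ is non-decreasing. Combined with the previous inequality, $I_{v_\infty}$ is constant on $[\sfrac14,\sfrac12]$, and so the equality case in \eqref{e:Cauchy Schwarz} must hold throughout that interval. This forces $\nabla v_\infty(x)\cdot x$ to be proportional to $v_\infty(x)$ on the annulus $\{\sfrac18<|x|<\sfrac12\}$ with proportionality constant $\lambda:=I_{v_\infty}(\sfrac14)$, i.e.\ $v_\infty$ is $\lambda$-homogeneous there, with $\lambda\geq 1+s$ by Corollary~\ref{c:minima freq}. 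Real-analyticity of $v_\infty$ in $\{x_{n+1}\neq 0\}$ and continuity across $\{x_{n+1}=0\}$ propagate the homogeneity to all of $\R^{n+1}$, so the homogeneous extension $w$ of $v_\infty|_{B_1}$ belongs to $\cH_\lambda\subset\cH$. The strong $H^1(B_1,\dm)$ convergence of $v_j$ to $v_\infty=w|_{B_1}$ then contradicts $\|v_j-w\|_{H^1(B_{\sfrac14},\dm)}>\varepsilon$.

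The main technical obstacle is guaranteeing that, after rescaling, the remainder $R_{v_j}$ in Proposition~\ref{p:monotonia+lower} vanishes uniformly on $[\sfrac14,\sfrac12]$ as $j\to\infty$: this uses both the $L^\infty$-control \eqref{e:Laphij} on $L_a((\varphi_j)_{\underline{0},\varrho_j})$ (which scales by a factor $\varrho_j^{\theta}$) and the lower bound $H_{(u_j)_{\underline{0}}}(r)\geq \delta\, r^{n+a+2(k+1-\theta)}$ built into $\underline{0}\in\mathscr{Z}_{\varphi_j,\theta,\delta}(u_j)$. Without the latter, the frequency of $v_\infty$ could not be shown to inherit the exact monotonicity needed to force constancy on $[\sfrac14,\sfrac12]$; this is why the quantifier ordering in the statement forces $\varrho_{\ref{l:almost hom}}$ to depend on $\delta$, whereas $\eta_{\ref{l:almost hom}}$ does not.
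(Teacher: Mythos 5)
Your proof follows essentially the same contradiction-and-compactness route as the paper, and the substantive ingredients (compactness to a zero-obstacle $v_\infty$, the normalization $H_{v_\infty}(1)=1$ to rule out triviality, vanishing of the rescaled frequency drop, constancy of $I_{v_\infty}$ and hence $v_\infty\in\cH$) all match; you merely expand the last step via equality in Cauchy--Schwarz where the paper cites \cite[Proposition~2.7]{FoSp17}.

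There is, however, one genuine imprecision in the negation step. You write ``one can find $\delta>0$ together with sequences $\eta_j\downarrow0$, $\varrho_j\downarrow 0$\,\ldots'', i.e.\ you fix a single $\delta$. But the quantifier order in the statement is $\exists\eta\,\forall\delta\,\exists\varrho$, so its negation produces, for each $\eta_j$, a \emph{possibly different} $\delta_j$; the paper is careful to write ``sequences of numbers $\delta_l,\varrho_l$'' for exactly this reason. Since $\ZZ$ enlarges as $\delta$ decreases, a violation for $\delta_j$ transfers to smaller $\delta$'s but not to larger ones, so a common $\delta$ cannot be extracted unless $\inf_j\delta_j>0$. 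The remedy keeps your argument intact: carry $\delta_j$ along and choose $\varrho_j$ small enough relative to $\delta_j$ (which is permitted by the quantifier ordering); then the factor $\delta_j^{-\sfrac12}\varrho_j^\theta$ appearing as in \eqref{e:Laphij}, and the analogous $\delta_j$-dependent constants in \eqref{e:Ddelta}--\eqref{e:frequencies comparison}, still vanish, and the compactness and frequency-limit computations proceed unchanged. With that adjustment your proof is the paper's.
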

\begin{proof}
The proof follows by a contradiction argument similar to \cite[Lemma~5.5]{FoSp17}. 
Assume that for some $\eps,A>0$ we could find sequences of 
numbers $\delta_l, \varrho_l$ and of $\sfrac1l$-almost 
homogeneous solutions $u_l$ of \eqref{e:ob-pb local} in $B_{\varrho_l}$, with $\varrho_l$ arbitrarily small, such that 
$\underline{0}\in\ZZ(u_l)$ and
\begin{equation}\label{e.lontano}
\inf_l\inf_{w\in\cH} 
\big\| (u_l)_{\underline{0},\varrho_l} - w\big\|_{H^1(B_{\sfrac14},\dm)}\geq \eps\,,
\end{equation}
and satisfying the bounds $I_{(u_l)_{\underline{0}}}(\varrho_l)\leq A$. 

Consider $v_l:=(u_l)_{\underline{0},\varrho_l}$, then by 
Corollary~\ref{c:compactness} applied to $v_l$ 
there would be a subsequence, not relabeled, 
converging in $H^1(B_1, \dm)$ 
to a solution $v_\infty$ of the thin obstacle problem 
with zero obstacle. By Proposition~\ref{p:almost monotonicity} 
there is some $A'$ independent of $l$ such that 
$I_{(u_l)_{\underline{0}}}(t)\leq A'$ for all $t\in(0,\varrho_l]$, 
then from \eqref{e:H1} in Corollary~\ref{c:H} we would infer that
\[
-\int{\dot{\phi}(\textstyle{\frac{2|x|}{t}})
\frac{|v_\infty|^2}{|x|}}|x_{n+1}|^a\d x
=-\lim_l\int{\textstyle{\dot{\phi}(\frac{2|x|}{t}})
\frac{|v_l|^2}{|x|}}|x_{n+1}|^a\d x
= \lim_{l}\frac{H_{(u_l)_{\underline{0}}}(\sfrac{\varrho_l}2)}{H_{(u_l)_{\underline{0}}}(\varrho_l)} \geq  2^{-(n+a+2A')},
\]
in turn implying that $v_\infty$ is not zero. 
On the other hand, we would also get
\[
I_{(v_\infty)_{ \underline{0}}}(\sfrac12) - 
I_{(v_\infty)_{ \underline{0}}}\big(\sfrac 14\big)
= \lim_l\big(J_{(v_l)_{ \underline{0}}}(\sfrac12) 
- J_{(v_l)_{ \underline{0}}}(\sfrac14)\big) = 
\lim_l\big(J_{(u_l)_{ \underline{0}}}(\sfrac{\varrho_l}2) 
- J_{(u_l)_{ \underline{0}}}(\sfrac{\varrho_l}4)\big)=
0,
\]
and thus we would conclude that $v_\infty\in \cH$ being 
a solution to the lower dimensional obstacle problem with 
constant frequency (see for instance \cite[Proposition~2.7]{FoSp17}).  
We have thus contradicted \eqref{e.lontano}.
\end{proof}

A rigidity property of the type shown in \cite[Proposition~5.6]{FoSp17} holds for the non-zero obstacle problem.

\begin{proposition}\label{p:rigidity}
Let $A,\,\tau>0$. There exists $\eta_{\ref{p:rigidity}}>0$ with this property.
For every $\delta>0$ there exists $\varrho_{\ref{p:rigidity}}$ such that, if $u$ is a $\eta_{\ref{p:rigidity}}$-almost homogeneous solution of \eqref{e:ob-pb local} in $B_\varrho$  with $\varrho\leq \varrho_{\ref{p:rigidity}}$ and obstacle $\ph$,  $\underline{0}\in\ZZ(u)$ and 
$I_{u_{\underline{0}}}(\varrho_{\ref{p:rigidity}})\leq A$, then 
the following dichotomy holds:
\begin{itemize}
\item[(i)] either for every point $x\in B_{\sfrac\varrho2}'\cap\ZZ(u)$ we have 
\begin{align}\label{e:rigidity1}
\left\vert J_{u_x}(\sfrac\varrho2) 
- J_{u_{\underline{0}}}(\sfrac\varrho2)\right\vert\leq\tau,
\end{align}
\item[(ii)] or there exists a linear subspace 
$V\subset\R^{n}\times\{0\}$ of dimension $n-2$ such that
\begin{align}\label{e:rigidity2}
\begin{cases}
y\in B_{\sfrac\varrho2}'\cap\ZZ(u),\\
J_{u_y}(\sfrac\varrho8) - J_{u_y}(\sfrac\varrho{16})\leq \eta_{\ref{p:rigidity}}
\end{cases}
\quad\Longrightarrow\quad \dist(y,V)<\tau \varrho.
\end{align}
\end{itemize}
\end{proposition}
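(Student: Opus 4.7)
The plan is to argue by compactness and contradiction, in the spirit of \cite[Proposition~5.6]{FoSp17}. Suppose the statement fails for some $A,\tau>0$: then for every $l\in\N$ one can find $\delta_l>0$, $\varrho_l\leq \sfrac{1}{l}$, and a solution $u_l$ of \eqref{e:ob-pb local} with obstacle $\varphi_l$ (normalized by $\|\varphi_l\|_{C^{k+1}(B_1')}\leq 1$) that is $\sfrac{1}{l}$-almost homogeneous in $B_{\varrho_l}$, with $\underline{0}\in \mathscr{Z}_{\varphi_l,\theta,\delta_l}(u_l)$ and $I_{(u_l)_{\underline{0}}}(\varrho_l)\leq A$, for which both alternatives (i) and (ii) fail. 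Setting $v_l:=(u_l)_{\underline{0},\varrho_l}$, Lemma~\ref{l:almost hom} provides, along a subsequence, some $w\in\cH$ such that $v_l\to w$ strongly in $H^1(B_{\sfrac14},\dm)$. The argument then splits according to whether $w\in\cH^\top$ or $w\in\cH^\low$.

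\textbf{Case 1 ($w\in\cH^\top$).} Here $S(w)$ is an $(n-1)$-dimensional horizontal subspace and $\Gamma(w)=S(w)$, while the frequency of $w$ equals the common homogeneity $\lambda$ at every point of $S(w)$. If (i) fails, I would extract points $x_l\in B'_{\sfrac{\varrho_l}{2}}\cap\mathscr{Z}_{\varphi_l,\theta,\delta_l}(u_l)$ with $|J_{(u_l)_{x_l}}(\sfrac{\varrho_l}{2})-J_{(u_l)_{\underline{0}}}(\sfrac{\varrho_l}{2})|>\tau$; after rescaling by $\varrho_l$, their images $z_l:=x_l/\varrho_l\in B'_{\sfrac12}$ lie in the free boundary of $v_l$. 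The local $C^{0,\alpha}$-convergence given by Corollary~\ref{c:compactness} together with the classification of $\cH^\top$ in \cite[Lemma~5.3]{FoSp17} would force any cluster point $z_\infty$ to belong to $\Gamma(w)=S(w)$; exploiting Lemma~\ref{l:lim uniforme} to guarantee positivity of $H$ at the base points and passing to the limit in the frequency (thanks to strong $H^1$ convergence) would then yield $I_{v_l,z_l}(\sfrac12),\,I_{v_l,\underline{0}}(\sfrac12)\to\lambda$, so the asserted gap collapses, a contradiction.

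\textbf{Case 2 ($w\in\cH^\low$).} Here $\dim S(w)\leq n-2$, so I could pick an $(n-2)$-dimensional subspace $V_\infty\subset\R^{n}\times\{0\}$ containing $S(w)$ and take $V:=\varrho_l\,V_\infty$. If (ii) fails with this choice, I would extract points $y_l\in B'_{\sfrac{\varrho_l}{2}}\cap\mathscr{Z}_{\varphi_l,\theta,\delta_l}(u_l)$ which are $\sfrac{1}{l}$-almost homogeneous at scale $\sfrac{\varrho_l}{4}$ and satisfy $\dist(y_l,V)\geq\tau\varrho_l$. Setting $z_l:=y_l/\varrho_l$, up to a further subsequence $z_l\to z_\infty$ with $\dist(z_\infty,V_\infty)\geq\tau$, so $z_\infty\notin S(w)$. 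The key remaining step would be to identify the limit of $(u_l)_{y_l,\varrho_l/4}$: using \eqref{e:stima rescal} to bound the discrepancy between $\varphi_{l,\underline{0}}$ and $\varphi_{l,y_l}$ (which is $O(\varrho_l^{k+1})$ and hence negligible once divided by $H_{(u_l)_{\underline{0}}}^{\sfrac12}(\varrho_l)\gtrsim\delta_l^{\sfrac12}\varrho_l^{(n+a)/2+k+1-\theta}$), together with Lemma~\ref{l:lim uniforme} to compare the normalizations $H_{(u_l)_{y_l}}^{\sfrac12}$ and $H_{(u_l)_{\underline{0}}}^{\sfrac12}$, this limit must equal $c_\infty\,w(z_\infty+\sfrac{\cdot}{4})$ for some $c_\infty>0$. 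By Lemma~\ref{l:almost hom} applied at $y_l$, this limit also lies in $\cH$ and is therefore homogeneous about the origin; a short elementary argument comparing the two homogeneity identities for $w$ (about $\underline{0}$ and about $z_\infty$) forces $z_\infty\in S(w)$, the sought contradiction.

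\textbf{Main obstacle.} The principal difficulty lies in Case~2, in identifying the limit of $(u_l)_{y_l,\varrho_l/4}$ with a positive multiple of $w(z_\infty+\sfrac{\cdot}{4})$. This requires carefully comparing the three pieces of the rescaling — the solution $u_l$, the Taylor-type correction $\varphi_{l,y_l}$, and the normalization $H_{(u_l)_{y_l}}^{\sfrac12}$ — with their counterparts centered at the origin, balancing the decay $O(\varrho_l^{k+1})$ of the polynomial mismatch against the lower bound on $H$ guaranteed by the assumption $\underline{0},y_l\in\mathscr{Z}_{\varphi_l,\theta,\delta_l}(u_l)$; only once this identification is available can the homogeneity argument conclude that $z_\infty$ belongs to the spine of $w$.
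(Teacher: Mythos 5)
Your proposal is correct and its overall architecture --- contradiction, rescaling $v_l:=(u_l)_{\underline 0,\varrho_l}$, compactness via Corollary~\ref{c:compactness} and Lemma~\ref{l:almost hom}, and the dichotomy $w\in\cH^\top$ versus $w\in\cH^\low$ --- coincides with the paper's. Case~1 is essentially the paper's argument (there one shows the cluster point $z_\infty$ lies in the critical set of $w$ and contradicts the constancy of the frequency at critical points of elements of $\cH^\top$, \cite[Lemma~5.3]{FoSp17}). The one genuine divergence is in Case~2: you propose to identify the full limit of the recentered rescalings $(u_l)_{y_l,\sfrac{\varrho_l}{4}}$ with a positive multiple of $w(z_\infty+\cdot)$ and then compare homogeneities about two centers --- precisely the step you single out as the main obstacle. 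The paper avoids this identification entirely: it passes to the limit only in the scalar frequency gap, showing $I_{w_{z_\infty}}(\sfrac18)-I_{w_{z_\infty}}(\sfrac1{16})=\lim_l\big(J_{(u_l)_{y_l}}(\sfrac{\varrho_l}{8})-J_{(u_l)_{y_l}}(\sfrac{\varrho_l}{16})\big)=0$, using the strong $H^1(B_1,\dm)$ convergence of $v_l$, the estimates \eqref{e:stima rescal}--\eqref{e:stima grad rescal} of Remark~\ref{r:rescal} to replace $(u_l)_{\underline 0}$ by $(u_l)_{y_l}$ inside the quotients, and \eqref{e:frequencies comparison}; then \cite[Proposition~2.7, Lemma~5.2]{FoSp17} converts constancy of the frequency of the homogeneous solution $w$ at $z_\infty$ over an interval of radii into $z_\infty\in S(w)$. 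This scalar route buys exactly what your route struggles with: the unknown normalization constant $c_\infty$ cancels in the frequency quotient, so no comparison of $H_{(u_l)_{y_l}}$ with $H_{(u_l)_{\underline 0}}$ beyond Lemma~\ref{l:lim uniforme} is required. Two minor points: in the contradiction setup you should choose $\varrho_l$ small relative to the (fixed for each $l$) $\delta_l$, so that the error terms of order $\varrho_l^{2\theta}/\delta_l$ arising from \eqref{e:stima rescal} and the lower bound in \eqref{e:Hgrowth} actually vanish --- the quantifier order in the statement ($\varrho_{\ref{p:rigidity}}$ may depend on $\delta$) permits this; and since $V$ is a linear subspace, $\varrho_l V_\infty=V_\infty$, the rescaling acting only on the distances.
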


\begin{proof}
The proof proceeds by contradiction and follows the strategy 
developed in \cite[Proposition~5.6]{FoSp17}.
Let $A,\,\tau>0$ be given constants 
and assume that there exist $\delta_l, \varrho_l$ and a sequence $(u_l)_{l\in\N}$ of 
$\sfrac 1l$-almost homogeneous solutions in $B_{\varrho_l}$ such that 
$\underline{0}\in {\mathscr{Z}_{\varphi_l,\theta,\delta_l}}(u_l)$,
$I_{(u_l)_{\underline{0}}}(\varrho_l)\leq A$ and such that:
\begin{itemize}
\item[(i)] there exists $x_l\in B_{\sfrac{\varrho_l}4}'\cap
{\mathscr{Z}_{\varphi_l,\theta,\delta_l}}(u_l)$ for which 
\begin{align}\label{e:rigidity1-contra}
\left\vert J_{(u_l)_{x_l}}(\sfrac{\varrho_l}2) 
- J_{(u_l)_{\underline{0}}}(\sfrac{\varrho_l}2)\right\vert>\tau,
\end{align}
\item[(ii)] for every linear subspace $V\in\R^n\times\{0\}$
of dimension $n-2$ there exists 
$y_l\in B_{\sfrac{\varrho_l}4}'\cap
\mathscr{Z}_{\varphi_l,\theta,\delta_l}(u_l)$
(a priori depending on $V$) such that
\begin{align}\label{e:rigidity2-contra}
J_{(u_l)_{y_l}}(\sfrac{\varrho_l}8) - J_{(u_l)_{y_l}}(\sfrac{\varrho_l}{16})
\leq\sfrac 1l
\quad\text{and}\quad \dist\big(y_l, V\big) \geq \tau{\varrho_l}.
\end{align}
\end{itemize}
We consider the rescaled functions $v_l:=
(u_l)_{\underline{0},\varrho_l}:B_2\to \R$.
By the compactness result in Corollary~\ref{c:compactness} 
we deduce that, up to passing to a subsequence (not relabeled), 
there exists a nonzero function $v_\infty$ solution to the thin 
obstacle problem \eqref{e:ob-pb local} in $B_1$ with null 
obstacle such that $v_l \to v_\infty$ in $H^1(B_1, \dm)$.
Moreover, $v_\infty \in \mathcal{H}$ thanks to 
Lemma~\ref{l:almost hom}.  

If $v_\infty \in \cH^\top$, then \eqref{e:rigidity1-contra} is 
contradicted. Indeed, up to choosing a further subsequence, 
we can assume that $z_l:=\varrho_l^{-1}x_l\to z_\infty \in \bar B_{\sfrac 12}$.
Note that the points $z_l \in \mathcal{N}(v_l)$, as $x_l\in\Gamma_{\varphi_l}(u_l)$, so that
\[
v_l(z_l) = (u_l)_{\underline{0},\varrho_l}(z_l) = 
\frac{\varrho_l^{\frac{n+a}{2}}}{H_{(u_l)_{\underline{0}}}^{\sfrac12}(\varrho_l)}
(u_l)_{\underline{0}}(x_l)=0\,.
\]
In addition, by \eqref{e:bd condition1} and being $\mathscr{E}\big[T_{k,\underline{0}}[\ph_l]\big]$ 
even with respect to $\{x_{n+1}=0\}$ (cf. Lemma~\ref{l:grufolo polinomio}), for all $l$ we infer that
\[
\lim_{t\downarrow 0}t^a\de_{n+1}v_l(z_l',t) =
\frac{\varrho_l^{\frac{n+a}{2}}}{H_{(u_l)_{\underline{0}}}^{\sfrac12}(\varrho_l)}
\lim_{t\downarrow 0}t^a\de_{n+1}\Big(u_l(x_l',t)  - 
\mathscr{E}\big[T_{k,\underline{0}}[\ph_l]\big](x_l',t)\Big) = 0\,.
\]
Hence, we conclude that $z_\infty \in \mathcal{N}(v_\infty)$ in view of \eqref{e:cpt3}.
Moreover, by taking into account the very definition of $v_l$ and
Remark~\ref{r:scaling} we get by scaling
\begin{align*}
&\left\vert I_{(v_\infty)_{z_\infty}}(\sfrac12) - 
I_{(v_\infty)_{\underline{0}}}(\sfrac12)\right\vert 
=\left|\frac{\sfrac12\int\phi(2|x-z_\infty|)|\nabla v_\infty|^2|x_{n+1}|^a\d x}
 {-\int\dot{\phi}(2|x-z_\infty|)\frac{|v_\infty|^2}{|x-z_\infty|}|x_{n+1}|^a\d x}-
\frac{\sfrac12\int\phi(2|x|)|\nabla v_\infty|^2|x_{n+1}|^a\d x}
 {-\int\dot{\phi}(2|x|)\frac{|v_\infty|^2}{|x|}|x_{n+1}|^a\d x}
 \right|\\
 &=\lim_{l\to+\infty}\left|\frac{\sfrac12\int\phi(2|x-z_l|)|\nabla v_l|^2|x_{n+1}|^a\d x}
 {-\int\dot{\phi}(2|x-z_l|)\frac{|v_l|^2}{|x-z_l|}|x_{n+1}|^a\d x}-
\frac{\sfrac12\int\phi(2|x|)|\nabla v_l|^2|x_{n+1}|^a\d x}
 {-\int\dot{\phi}(2|x|)\frac{|v_l|^2}{|x|}|x_{n+1}|^a\d x}
 \right|\\
 &=\lim_{l\to+\infty}\left|\frac{\sfrac{\varrho_l}2\int\phi(\frac{|z-x_l|}{\sfrac{\varrho_l}2})|\nabla (u_l)_{\underline{0}}|^2|z_{n+1}|^a\d z}
 {-\int\dot{\phi}(\frac{|z-x_l|}{\sfrac{\varrho_l}2})\frac{|(u_l)_{\underline{0}}|^2}{|z-x_l|}|z_{n+1}|^a\d z}-
 \frac{\sfrac{\varrho_l}2 D_{(u_l)_{\underline{0}}}(\sfrac{\varrho_l}2)}{H_{(u_l)_{\underline{0}}}(\sfrac {\varrho_l}2)}\right|
 &\\
&=\lim_{l\to+\infty}
\left|\frac{\sfrac{\varrho_l}2D_{(u_l)_{x_l}}(\sfrac{\varrho_l}2)}{H_{(u_l)_{x_l}}(\sfrac{\varrho_l}2)}
-\frac{\sfrac{\varrho_l}2D_{(u_l)_{\underline{0}}}(\sfrac{\varrho_l}2)}{H_{(u_l)_{\underline{0}}}(\sfrac{\varrho_l}2)}\right|
\stackrel{\eqref{e:frequencies comparison}}{=} \lim_{l\to+\infty}
\left\vert I_{(u_l)_{x_l}}(\sfrac{\varrho_l}2) - I_{(u_l)_{\underline{0}}}(\sfrac{\varrho_l}2)\right\vert\\
&= 
\lim_{l\to+\infty}\left\vert J_{(u_l)_{x_l}}(\sfrac {\varrho_l}2) - J_{(u_l)_{\underline{0}}}(\sfrac{\varrho_l}2)\right\vert\geq 
\tau,
\end{align*}
which is a contradiction to the constancy of the frequency 
at critical points of the homogeneous solution 
$v_\infty\in \cH^{\top}$ (see \cite[Lemma~5.3]{FoSp17}).
The fourth equality is justifed by taking into account that
$x_l\in{\mathscr{Z}_{\varphi_l,\theta,\delta_l}}(u_l)$ (cf. \eqref{e:Hgrowth} and \eqref{e:Ddelta}), 
and in view of estimates \eqref{e:stima rescal} and \eqref{e:stima grad rescal} in Remark~\ref{r:rescal}, 
in turn implying for all $z\in B_{\sfrac{\varrho_l}2}(x_l)$ (recall that $\varrho_l^{-1}x_l\to z_\infty$)
\[
|(u_l)_{\underline{0}}(z)-(u_l)_{x_l}(z)|\leq C\varrho_l^{k+1},\qquad
|\nabla\big((u_l)_{\underline{0}}(z)-(u_l)_{x_l}(z)\big)|
\leq C\varrho_l^k\,.
\]
Moreover, \eqref{e:frequencies comparison} 
can be employed in the last two equalities as $x_l\in B_{\sfrac{\varrho_l}4}'\cap
\mathscr{Z}_{\varphi_l,\theta,\delta_l}(u_l)$. 

Instead, if $v_\infty\in\cH^\low$, we show a contradiction to \eqref{e:rigidity2-contra}
with $V$ any $(n-2)$-dimensional subspace containing $S(v_\infty)$.
Indeed, let $y_l$ be as in \eqref{e:rigidity2-contra} for such a choice 
of $V$. By compactness, up to passing to a subsequence (not relabeled), 
$z_l:=\varrho_l^{-1}y_l\to z_\infty$ for some 
 $z_\infty\in \bar B_{\sfrac12}$ with $\dist(z_\infty, V)\geq \tau$. 
In addition, arguing as before
\begin{align*}
&\left\vert I_{(v_\infty)_{z_\infty}}\big(\sfrac18\big) - 
I_{(v_\infty)_{z_\infty}}\big(\sfrac1{16}\big)\right\vert \\
&=\lim_{l\to+\infty}
\left|\frac{\sfrac{\varrho_l}8D_{(u_l)_{y_l}}(\sfrac{\varrho_l}8)}{H_{(u_l)_{y_l}}(\sfrac{\varrho_l}8)}
-\frac{\sfrac{\varrho_l}{16}D_{(u_l)_{y_l}}(\sfrac{\varrho_l}{16})}{H_{(u_l)_{y_l}}(\sfrac{\varrho_l}{16})}\right|
\stackrel{\eqref{e:frequencies comparison}}{=} 
\lim_{l\to+\infty}
\left\vert I_{(u_l)_{y_l}}(\sfrac{\varrho_l}8) - I_{(u_l)_{y_l}}(\sfrac{\varrho_l}{16})\right\vert\\
&=\lim_{l\to+\infty}\left\vert J_{(u_l)_{y_l}}(\sfrac{\varrho_l}8) - J_{(u_l)_{y_l}}(\sfrac{\varrho_l}{16})\right\vert=0\,.
\end{align*}
Again, note that \eqref{e:frequencies comparison} can be employed since 
$y_l\in B_{\sfrac{\varrho_l}2}'\cap\mathscr{Z}_{\varphi_l,\theta,\delta_l}(u_l)$. 
By \cite[Proposition~2.7, Lemma~5.2]{FoSp17} it follows
that $z_\infty \in S(v_\infty)$, thus contradicting 
$S(v_\infty)\subseteq V$ and $\dist(z_\infty,V)\geq \tau$.
\end{proof}

\subsection{Proof of Theorem~\ref{t:phi Ck}}\label{s:misura}
We start off noting that it suffices to prove that $\Gamma_{\varphi,\theta}(u)\cap \bar{B}_1(x_0)$ 
satisfies all the conclusions for all $x_0\in\Gamma_{\varphi,\theta}(u)$. For all $R\in(0,1)$, 
we can find a finite number of balls $B_{\sfrac R2}(x_i)$, $x_i\in\Gamma_{\varphi,\theta}(u)$ 
for $i\in\{1,\ldots,M\}$, whose union cover 
$\Gamma_{\varphi,\theta}(u)\cap \bar{B}_1(x_0)$. We shall choose
appropriately $R$ in what follows. Moreover, with fixed 
$i\in\{1,\ldots,M\}$, by horizontal translation we may reduce to 
$x_i=\underline{0}\in\Gamma_{\varphi,\theta}(u)$ without loss of generality.

Then, recalling the definition of $\Gamma_{\varphi,\theta}(u)$ in \eqref{e:Hgrowth intro}
we have that
\[
\Gamma_{\varphi,\theta}(u)\cap B_{\sfrac R2}'=\cup_{j\in\N}
\mathscr{Z}_{\varphi,\theta,\sfrac1j}^R(u)\,,
\]
where 
\[
\mathscr{Z}_{\varphi,\theta,\sfrac1j}^R(u) := 
\big\{x_0\in \Gamma_\varphi(u)\cap B_{\sfrac R2}':\, 
H_{u_{x_0}}(r)\geq\sfrac{r^{n+a+2(k+1-\theta)}}j\quad\forall\; r\in(0,\sfrac R2)\big\}
\]
Hence, we may establish the result for 
$\mathscr{Z}_{\varphi,\theta,\sfrac1j}^R(u)$ with $j\in\N$ fixed. 
 
Next, note that as $\underline{0}\in\Gamma(u)$, the function 
\[
\widetilde{u}(y):=u(Ry)-u(\underline{0})
\]
solves the fractional obstacle problem \eqref{e:ob-pb local} in $B_1$ with obstacle function 
$\widetilde{\varphi}(\cdot):=
\varphi(R\cdot)-\varphi(\underline{0})$. 
Moreover, 
$\Gamma_{\widetilde{\varphi},\theta}(\widetilde{u})\cap B_{\sfrac 12}'=\frac1R\big(\Gamma_{\varphi,\theta}(u)\cap B_{\sfrac R2}'\big)$, with $\widetilde{u}_{\sfrac{z}R}(\cdot)=
\,u_{z}(R\cdot)$ if $z\in\Gamma_{\varphi,\theta}(u)\cap B_{\sfrac R2}'$,
being $T_{k,\sfrac{z}R}[\widetilde{\varphi}](\cdot)=
T_{k,z}[\varphi](R\cdot)$.
Thus, we get that $z\in\mathscr{Z}_{\varphi,\theta,\sfrac1j}^R(u)$ 
if and only if $\sfrac{z}R\in B_{\sfrac 12}'
\cap\mathscr{Z}_{\widetilde{\varphi},\theta,\sfrac{R^{2(k+1-\theta)}}j}(\widetilde{u})$.
In addition, it is easy to check that
\[
 \|\widetilde{\varphi}\|_{C^{k+1}(B_1')}
 \leq 
 R\|\nabla \varphi\|_{C^k(B_R',\R^n)}\,.
\]
We choose  $R>0$ sufficiently small so that 
$\|\widetilde{\varphi}\|_{C^{k+1}(B_R')}\leq1$ and the smallness conditions on the radii in all the statements of Sections \ref{ss:cikappagamma}-\ref{ss:mainresult} are satisfied.

In such a case the proof, of the main results can be obtained by following verbatim \cite[Sections~6--8]{FoSp17}. 
Indeed, \cite[Proposition~6.1]{FoSp17}, that leads both to the local finiteness of the Minkowskii content of $\mathscr{Z}_{\widetilde{\varphi},\theta,\delta}(\widetilde{u})$ and to its
$(n-1)$-rectifiability,
is based on a covering argument that exploits the lower bound on the frequency in  Corollary~\ref{c:minima freq}, the control of the mean oscillation via the frequency in Proposition~\ref{p:mean-flatness vs freq}, the rigidity of almost homogeneous solutions in 
Proposition~\ref{p:rigidity}, the discrete Reifenberg theorem by 
Naber and Valtorta \cite[Theorem~3.4, Remark~3.9]{NaVa1}, and the rectifiability criterion either by Azzam and Tolsa  \cite{AzTo15} or by Naber and Valtorta \cite{NaVa1, NaVa2}.
Therefore, the only extra-care needed in the current setting is to start the covering argument from a scale which is small enough to validate the conclusions of the lemmas and propositions of the previous sections.


Finally, the classification of blow-up limits is exactly that stated in
\cite[Theorem~1.3]{FoSp17}, and proved in 
\cite[Section~8]{FoSp17}, in view of Lemma~\ref{l:freq lower} 
and Corollary~\ref{c:compactness}.
\qedhere

%
%

\bibliographystyle{plain}

\end{document}